\definecolor{tocolor}{rgb}{.1,.1,.1}
\definecolor{urlcolor}{rgb}{.2,.2,.6}
\definecolor{linkcolor}{rgb}{.1,.1,.5}
\definecolor{citecolor}{rgb}{.4,.2,.1}
\newcommandx{\thdef}[2]{
	\newaliascnt{#1}{theorem}  
	\newtheorem{#1}[#1]{#2}
	\aliascntresetthe{#1}  
	\newtheorem*{#1*}{#2}
	\expandafter\newcommand\expandafter{\csname #1autorefname\endcsname}{#2}
}
\newtheorem*{rep@theorem}{\rep@title}
\newcommand{\newreptheorem}[2]{%
\newenvironment{rep#1}[1]{%
 \def\rep@title{#2 \ref{##1}}%
 \begin{rep@theorem}}%
 {\end{rep@theorem}}}
\newtheorem{theorem}{Theorem}[section]
\theoremstyle{definition}
\theoremstyle{remark}
\theoremstyle{remark}
\newenvironment{example}
{\begin{ex}}%
{\hfill $\blacksquare$\end{ex}}
\newcommand{\spc}[1]{\mathsf{#1}} 
\newcommand{\shf}[1]{\mathcal{#1}} 
\newcommand{\CC}{\mathbb{C}}
\newcommand{\rbrac}[1]{\left(#1\right)} 
\newcommandx{\fn}[2][2=]{#1\ifthenelse{\equal{#2}{}}{}{\!\rbrac{{#2}}}} 
\newcommandx{\id}[2][2=]{\fn{{\rm id}_{#1}}[#2]} 
\newcommand{\ext}[2][\bullet]{\spc{\Lambda}^{#1}{#2}} 
\newcommandx{\End}[2][1=]{\fn{\spc{End}_{#1}}[#2]} 
\newcommandx{\Hom}[2][1=]{\fn{\spc{Hom}_{#1}}[#2]} 
\newcommandx{\Aut}[2][1=]{\fn{\spc{Aut}_{#1}}[#2]} 
\newcommandx{\image}[1]{\fn{\spc{img}}[#1]} 
\renewcommandx{\ker}[1]{\fn{\spc{ker}}[#1]} 
\newcommandx{\rank}[1]{\fn{\mathrm{rank}}[#1]} 
\newcommandx{\ann}[1]{\fn{\spc{ann}}[#1]} 
\newcommandx{\hlgy}[3][1=\bullet,3=]{\spc{H}_{#1}^{#3}\!\rbrac{{#2}}} 
\newcommandx{\cohlgy}[3][1=\bullet,3=]{\spc{H}^{#1}_{#3}\!\rbrac{{#2}}} 
\newcommandx{\chow}[3][1=\bullet,3=]{\spc{A}^{#1}_{#3}\!\rbrac{{#2}}} 
\newcommandx{\Ext}[3][1=\bullet,3=]{\fn{\spc{Ext}^{#1}_{#3}}[{#2}]} 
\newcommandx{\Tor}[3][1=\bullet,3=]{\fn{\spc{Tor}^{#1}_{#3}}[{#2}]} 
\newcommandx{\Pic}[1]{\fn{\spc{Pic}}[{#1}]} 
\newcommandx{\chernalg}[2][1=\bullet]{\fn{\spc{Chern}^{#1}}[{#2}]} 
\newcommandx{\chern}[2][1=]{\fn{c_{#1}}[#2]} 
\newcommandx{\ch}[2][1=]{\fn{\mathrm{ch}_{#1}}[{#2}]} 
\newcommandx{\sKer}[2][1=]{ \fn{ \shf{K}er_{#1}}[{#2}] } 
\newcommandx{\sHom}[2][1=]{ \fn{ \shf{H}om_{#1}}[{#2}] } 
\newcommandx{\sEnd}[2][1=]{ \fn{ \shf{E}nd_{#1}}[{#2}] } 
\newcommandx{\sExt}[3][1=\bullet,3=]{\fn{\shf{E}xt^{#1}_{#3}}[{#2}]} 
\newcommandx{\sTor}[3][1=\bullet,3=]{\fn{\shf{T}or^{#1}_{#3}}[{#2}]} 
\newcommandx{\forms}[2][1=\bullet]{\Omega^{#1}_{#2}} 
\newcommandx{\can}[1][1=]{\omega_{#1}} 
\newcommandx{\acan}[1][1=]{\omega_{#1}^{-1}} 
\newcommandx{\tshf}[1]{\shf{T}_{#1}} 
\newcommandx{\mvect}[2][1=\bullet]{ \ext[#1]{\tshf{#2}} }
\newcommandx{\der}[2][1=\bullet]{\mathscr{X}^{#1}_{#2}} 
\newcommandx{\sJet}[3][1=,2=]{\shf{J}^{#1}_{#2}#3} 
\newcommandx{\tb}[2][1=]{\spc{T}_{\!#1}{#2}} 
\newcommandx{\ctb}[2][1=]{\spc{T}_{\!#1}^*{#2}} 
\newcommandx{\lie}[2][2=]{\fn{\mathscr{L}_{#1}}[#2]} 
\newcommandx{\hook}[2][2=]{\fn{i_{#1}}[#2]} 
\newcommand{\del}{\partial}
\newcommand{\thickbar}{\mathpalette\@thickbar}
\newcommand{\@thickbar}[2]{{#1\mkern1.5mu\vbox{
  \sbox\z@{$#1\mkern-1mu#2\mkern-1mu$}%
  \sbox\tw@{$#1\overline{#2}$}%
  \dimen@=\dimexpr\ht\tw@-\ht\z@-.6\p@\relax
  \hrule\@height.4\p@ 
  \vskip1\p@
  \hrule\@height.4\p@ 
  \vskip\dimen@
  \box\z@}\mkern1.5mu}
}
\newcommand{\JJ}{\mathbb J}
\newcommand{\delbar}{\overline\partial}
\newcommand{\eps}{\epsilon}
\newcommand{\ol}{\overline}
\renewcommand{\Re}{\mathrm{Re}}
\renewcommand{\Im}{\mathrm{Im}}
\def\ker{\text{ker}}
\def\End{\text{End}}
\def\log{\text{log}}
\newcommand{\bb}[1]{\mathbb{#1}}
\renewcommand{\cal}[1]{\mathcal{#1}}
\numberwithin{equation}{section}
\newtheoremstyle{parag}
  {\topsep}   
  {\topsep}   
  {}  
  {}       
  {\bfseries} 
  {.}         
  { } 
  {}          
\theoremstyle{parag}
\def\@cite#1#2{{\normalfont[{#1\if@tempswa , #2\fi}]}}
\newcommand{\BPic}{\mathrm{Pic}^\cal{L}}
\renewcommand{\Pic}{\mathrm{Pic}}
\newcommand{\PG}{\mathcal{PG}}
\newcommand{\BPG}{\mathcal{PG}^\cal{L}}
\begin{document}

\title{\vspace{-4em} \huge Morita equivalence and the generalized K\"ahler potential}
\date{}

\author{
Francis Bischoff\thanks{Department of Mathematics, University of Toronto; {\tt bischoff@math.toronto.edu }}\and
Marco Gualtieri \thanks{Department of Mathematics, University of Toronto; {\tt mgualt@math.toronto.edu}}
 \and
Maxim Zabzine \thanks{Department of Physics and Astronomy, Uppsala University; {\tt maxim.zabzine@physics.uu.se}}
}
\maketitle

\abstract{
We solve the problem of determining the fundamental degrees of freedom underlying a generalized K\"ahler structure of symplectic type.  For a usual K\"ahler structure, it is well-known that the geometry is determined by a complex structure, a K\"ahler class, and the choice of a positive $(1,1)$-form in this class, which depends locally on only a single real-valued function: the K\"ahler potential.    Such a description for generalized K\"ahler geometry has been sought since it was discovered in 1984.  We show that a generalized K\"ahler structure of symplectic type is determined by a pair of holomorphic Poisson manifolds, a holomorphic symplectic Morita equivalence between them, and the choice of a positive Lagrangian brane bisection, which depends locally on only a single real-valued function, which we call the generalized K\"ahler potential.  Our solution draws upon, and specializes to, the many results in the physics literature which solve the problem under the assumption (which we do not make) that the Poisson structures involved have constant rank.  To solve the problem we make use of, and generalize, two main tools: the first is the notion of symplectic Morita equivalence, developed by Weinstein and Xu to study Poisson manifolds; the second is Donaldson's interpretation of a K\"ahler metric as a real Lagrangian submanifold in a deformation of the holomorphic cotangent bundle.  
}
\renewcommand{\contentsname}{}
\setcounter{tocdepth}{1}
\tableofcontents

\pagebreak
\section*{Introduction}

A central feature of K\"ahler geometry is the fact that, once a complex structure is chosen, the Riemannian metric depends locally on only a single real-valued function, the K\"ahler potential.  Indeed, in holomorphic coordinates we have the familiar expression for the metric tensor
\[
g_{i\bar\jmath} = \frac{\del^2 f}{\del z_i \del\bar z_j}.
\]
This fundamental observation is crucial to many aspects of K\"ahler geometry, especially to the problem of finding K\"ahler-Einstein metrics and to studying the K\"ahler-Ricci flow; in both cases these problems are reduced to differential equations for a single real-valued function.

The importance of K\"ahler structures in physics derives from the discovery by
Zumino~\cite{ZUMINO1979203} that if a Riemannian manifold is equipped with a K\"ahler structure, then the 2-dimensional sigma model, a field theory whose fields are maps from a Lorentzian 2-manifold to a Riemannian target manifold, may naturally be extended, by introducing additional fields and using the K\"ahler structure on the target, to a field theory with $\mathcal{N}=2$ supersymmetry, an enlargement of the usual Lorentz symmetry of the original model.  Zumino also made the key observation that the additional fields of the resulting theory could be interpreted, together with the original field, as the components of a single map called a superfield, but where the domain of the map is modified to be a supermanifold
 and the map satisfies a certain constraint.  
Zumino showed that in this more geometric formulation of the extended model, the Lagrangian density simplifies dramatically, and is given precisely by the pullback of the K\"ahler potential function $f$ described above.

It was later discovered by Gates, Hull and Ro\v{c}ek~\cite{MR776369} that K\"ahler geometry is not the only structure giving rise to such a supersymmetric extension; they showed that what is required is \emph{generalized K\"ahler geometry}, consisting of a pair $I_+, I_-$ of complex structures compatible with the Riemannian metric  and whose associated Hermitian forms $\omega_+, \omega_-$ are not necessarily closed.  Instead, they satisfy the conditions
\[
d_{+}^{c}\omega_{+} + d_{-}^{c}\omega_{-} = 0, \qquad dd_{\pm}^{c}\omega_{\pm} = 0.
\]
Gates, Hull and Ro\v{c}ek argued that while a naive application of the $\del\delbar$-lemma is impossible in this case, there should still be a reformulation of the sigma model generalizing the one found by Zumino, for which the Lagrangian density would again be a single real-valued function, which, in turn, determines the metric.  The authors established this under the assumption that the complex structures $I_+, I_-$ commute, and in subsequent works~\cite{lindstrom2007potential,lindstrom2007generalized} it was established under the assumption that the commutator $[I_+,I_-]$ has constant rank.  
The purpose of this paper is to introduce a new mathematical approach to this problem, and to solve the general case, where the rank of $[I_+,I_-]$ is not locally constant, a common situation in examples. It is important to emphasize that in this paper, we do make the simplifying assumption that $I_++I_-$ is invertible (what we call ``symplectic type''); in the language of superfields, we assume there are chiral and semichiral superfields, but no twisted chiral superfields. 


We now give a brief outline of the main ideas and results in the paper.  
In Section~\ref{symptyp}, we review the key tools from Dirac geometry and generalized complex geometry which we use, and we define the notion of a generalized K\"ahler structure of symplectic type to which all our main results apply.  Most importantly, we explain Hitchin's observation that the commutator $[I_+, I_-]$ defines a pair of holomorphic Poisson structures $\sigma_+, \sigma_-$ relative to the complex structures $I_+, I_-$.  

In Section~\ref{Donkahler}, we revisit the classical K\"ahler case and explain an important observation of Donaldson that the K\"ahler potential function has a global interpretation as a Lagrangian submanifold of a holomorphic symplectic affine bundle deforming the cotangent bundle and determined by the K\"ahler class.  Our main insight is that it is this viewpoint on K\"ahler geometry which may be profitably generalized.  

The space which plays the role of the holomorphic symplectic affine bundle in our more general construction is a \emph{Morita equivalence} bibundle relating the Poisson structures $\sigma_+$ and $\sigma_-$.  Morita equivalence is a relation between Poisson manifolds introduced by Weinstein and Xu~\cite{weinstein1987symplectic,xu1991morita}, and involves a symplectic manifold which maps via Poisson maps to the spaces being related; it may be viewed as an invertible generalized morphism. We review the key properties of this equivalence relation on Poisson manifolds in Section~\ref{moreq}.  

In Section~\ref{branbi}, we prove our main result, Theorem~\ref{main}, showing that generalized K\"ahler structures of symplectic type are equivalent to holomorphic symplectic Morita equivalences equipped with a bisection which is Lagrangian for the imaginary part of the holomorphic symplectic form (i.e., a Lagrangian brane).  We also explain the curious way in which such a Lagrangian brane determines a symmetric tensor---the generalized K\"ahler metric. 

Having expressed the geometry in terms of a Lagrangian brane in a holomorphic symplectic manifold, we are able in Section~\ref{secpot} to generalize the classical generating function technique to obtain a local description in terms of a single real-valued function, the generalized K\"ahler potential.   We verify that our potential reduces to those found earlier in special cases, and we use it to produce a new example (Proposition~\ref{globex}) of a generalized K\"ahler metric which has a global generalized K\"ahler potential, yet whose Poisson structures are not of constant rank.  

In Section~\ref{picsec}, we explore the basic features of the category of holomorphic symplectic Morita equivalences equipped with brane bisection. Real versions of this category or \emph{Picard groupoid} have been investigated recently~\cite{bursztyn2004picard, bursztyn2015picard}, and so we carefully explain the unexpected link to generalized K\"ahler geometry. 

Since our main result represents a generalized K\"ahler metric as a Lagrangian submanifold, it is clear that the group of Hamiltonian flows must act on the space of generalized K\"ahler metrics. In Section~\ref{hamflo}, we develop this idea into a method for deforming generalized K\"ahler structures. We also show that this flow method specializes to the existing constructions~\cite{MR1702248,MR2217300,MR2371181,gualtieri2010branes} of generalized K\"ahler metrics.  Finally, in Section~\ref{locdef}, we use the above results to prove, in Theorem~\ref{GKlocallyflow}, that any generalized K\"ahler structure of symplectic type may locally be constructed via the flow method applied to a natural ``degenerate'' generalized K\"ahler structure which exists on any holomorphic Poisson manifold.  In other words, we have the striking result that, locally, any generalized K\"ahler structure of symplectic type may be obtained by applying the flow construction to a holomorphic Poisson structure. 

\vspace{.05in}

\noindent \textbf{Acknowledgements.} We would like to thank Nigel Hitchin, Chris Hull, Ulf Lindstr\"om, Martin Ro\v{c}ek, David Mart\'ines Torres, and Rikard von Unge for discussions on this subject over many years.  F.B. is supported by an NSERC CGS Doctoral award, M.G. is supported by an NSERC Discovery Grant, and M.Z. is supported by Vetenskapsr\r{a}det under grant W2014-5517, by the STINT grant and by the grant ``Geometry and Physics'' from the Knut and Alice Wallenberg foundation.

%
%
%
%
%
\pagebreak
\section{Generalized K\"ahler structures of symplectic type}\label{symptyp}
We start by reviewing the concept of Generalized K\"ahler (GK) geometry and its many reformulations, as well as the subclass of GK structures of \emph{symplectic type} which will be the main subject of this paper. We begin with the notion of bihermitian geometry, first formulated by Gates, Hull, and Ro\v{c}ek in the context of $N = (2,2)$ supersymmetry \cite{MR776369}. A \emph{bihermitian} structure on a manifold $M$ consists of the data $(g, I_{+}, I_{-})$ of a Riemannian metric $g$ and two complex structures $I_{\pm}$ such that $g$ is Hermitian with respect to both complex structures and such that the following integrability conditions are satisfied:
\[
d_{+}^{c}\omega_{+} + d_{-}^{c}\omega_{-} = 0, \qquad dd_{\pm}^{c}\omega_{\pm} = 0,
\]
where $\omega_{\pm} = gI_{\pm}$ are the associated Hermitian forms and $d_{\pm}^{c} = i(\ol{\partial}_{\pm} - \partial_{\pm})$ are the real operators determined by the two complex structures. The closed $3$-form $H = \pm d_{\pm}^{c}\omega_{\pm}$ is used to define the Wess--Zumino--Witten term in the non-linear sigma model. We assume that it is trivial in cohomology and choose an explicit potential $b$ such that $H = -db$, called the $B$-field. Hence we are concerned with the following data: $(g, I_{+}, I_{-}, b)$. Note that these data admit an action of the additive group of closed $2$-forms $\Omega^{2, cl}(M)$:
\[
B : (g, I_{+}, I_{-}, b) \mapsto (g, I_{+}, I_{-}, b + B),
\]
which we refer to as $B$-field gauge transformations.

As explained in \cite{gualtieri-2010}, this geometry admits a reformulation in terms of commuting \emph{generalized complex structures} on $TM \oplus T^{\ast}M$ known as \emph{generalized K\"ahler geometry}. Let us briefly recall the relevant notions. The bundle $\bb{T} = TM \oplus T^{\ast}M$ admits a natural split-signature non-degenerate symmetric pairing (let $X, Y \in \Gamma(TM)$ and $\xi, \eta \in \Gamma(T^{\ast}M)$):
\[
\langle X + \xi, Y + \eta \rangle = \frac{1}{2}(\xi(Y) + \eta(X)),
\]
as well as an extension of the Lie bracket, known as the \emph{Courant bracket}:
\[
[\![ X + \xi, Y + \eta ]\!] = [X,Y] + \cal{L}_{X}(\eta) - \iota_{Y}(d\xi),
\]
which satisfies the Jacobi identity but is skew-symmetric only up to an exact term involving the above pairing. A generalized complex (GC) structure is defined to be an endomorphism $\bb{J}$ of $\bb{T}$ squaring to $-1$ which is compatible with the symmetric pairing and such that its $+i$ eigenbundle $L$ in the complexification $\bb{T}_\bb{C} = \bb{T}\otimes\bb{C}$ is closed under the (complexified) Courant bracket. A generalized K\"ahler structure is then given by a pair $(\bb{J}_{\cal{A}}, \bb{J}_{\cal{B}})$ of commuting generalized complex structures such that the product $\bb{G} = - \bb{J}_{\cal{A}} \bb{J}_{\cal{B}}$ defines a positive definite metric on $\bb{T}M$, which is to say that for all non-zero $v \in \bb{T}M$:
\[
\langle \bb{G} v, v \rangle > 0.
\]
The bihermitian structure $(g,I_{+},I_{-},b)$ may be encoded in the following Generalized K\"ahler pair:
\begin{align*}
\bb{J}_{\cal{B}} &= \frac{1}{2} e^{b} \begin{pmatrix} I_{+} + I_{-} & -(\omega_{+}^{-1} - \omega_{-}^{-1}) \\  \omega_{+} - \omega_{-} & -(I_{+}^{\ast} + I_{-}^{\ast}) \\ \end{pmatrix} e^{-b}, \\
\bb{J}_{\cal{A}} &= \frac{1}{2} e^{b} \begin{pmatrix} I_{+} - I_{-} & -(\omega_{+}^{-1} + \omega_{-}^{-1}) \\  \omega_{+} + \omega_{-} & -(I_{+}^{\ast} - I_{-}^{\ast}) \\ \end{pmatrix} e^{-b},
\end{align*}
where the conjugation is by the natural orthogonal symmetry determined by $b$:
\[
e^{b} = \begin{pmatrix} 1 & 0 \\ b & 1 \end{pmatrix}.
\]
Conversely, given a GK pair $(\bb{J}_{\cal{A}}, \bb{J}_{\cal{B}})$, it is possible to recover the bihermitian data in the following way: the product $\bb{G}= - \bb{J}_{\cal{A}}\bb{J}_{\cal{B}}$ has eigenvalues $\pm 1$, with corresponding eigenbundles \[
C_{\pm} = \{X + (b\pm g)X \ |\ X\in TM\},
\]
which determine $g$ and $b$. These eigenbundles map isomorphically to $TM$ by the natural projection, and $I_{\pm}$ is given by the restriction of $\bb{J}_{\cal{B}}$ to $C_{\pm}$. Note that $B$-field gauge transformations act on the $GK$ structure by conjugation:
\[
B: (\bb{J}_{\cal{A}}, \bb{J}_{\cal{B}}) \mapsto (e^{B}\bb{J}_{\cal{A}}e^{-B}, e^{B}\bb{J}_{\cal{B}}e^{-B}).
\]

The GC structures are themselves fully specified by their $+i$ eigenbundles $L_{\cal{A}}, L_{\cal{B}} \subset \bb{T}_\CC$, which are complex \emph{Dirac structures}: maximal isotropic subbundles of $\bb{T}_\bb{C}$ which are involutive for the Courant bracket. As such, it is possible to completely characterize the conditions for generalized K\"ahler geometry in terms of the Dirac structures $L_{\cal{A}}$ and $L_{\cal{B}}$, leading to yet another reformulation of the geometry. This is what is done in \cite{mgualt-hamdef}, and we reproduce the key statements here.  For this, we need two basic observations concerning Dirac structures.  

The first is that holomorphic Poisson structures may be viewed as Dirac structures in the following way.  Given a complex structure $I$ on the manifold $M$ and a holomorphic Poisson tensor $\sigma$, we decompose it into real and imaginary parts 
$
\sigma = -\frac{1}{4}(IQ + iQ),
$
and define the following GC structure:
\[
\bb{J}_{\sigma} =  \begin{pmatrix} 
      -I & Q \\
      0 & I^{\ast} \\
   \end{pmatrix}.
\]
The $+i$ eigenbundle of $\bb{J}_\sigma$ is the complex Dirac structure
\[
L_{\sigma} = \{ X + \sigma(\zeta) + \zeta \  | \  X \in T^{0,1}M, \  \zeta \in T_{1,0}^{*}M \}.
\]
Note that this Dirac structure encodes both the complex structure $I$ and the Poisson tensor $\sigma$; $L_{\sigma}$ is involutive if and only if $I$ is integrable, $\sigma$ is holomorphic, and $\sigma$ is Poisson.  Also, because the intersection with the complexified tangent bundle $T_\CC = TM\otimes\CC$ is given by $L_{\sigma} \cap T_\CC = T^{0,1}$, the Dirac structure $L_\sigma$ satisfies the special property that $T_\bb{C} = (L_{\sigma} \cap T_\bb{C}) \oplus (\ol{L}_{\sigma} \cap T_\bb{C})$. It is shown in \cite{mgualt-hamdef} that this property is in fact enough to guarantee that a complex Dirac structure arises from a holomorphic Poisson structure in the manner described above.

Second, we recall the notion of linear combination of Dirac structures. Given a Dirac structure $L \subset \bb{T}_\bb{C}$, we may scale it by $\lambda \in \bb{C}^*$ to produce a new Dirac structure:
\[
\lambda L = \{ X + \lambda \alpha \ | \ X + \alpha \in L \}.
\]
Similarly, given a pair of Dirac structures $L_{1}$ and $L_{2}$ such that their projections to $T_\bb{C}$ are transverse, we define the sum Dirac structure:
\[
L_{1} + L_{2} = \{ X + \alpha + \beta \ | \ X + \alpha \in L_{1}, X + \beta \in L_{2} \}.
\]
Since it occurs frequently, we use the notation $L_1-L_2$ to denote $L_1 + (-1)L_2$.  Any closed 2-form $B$ defines a Dirac structure through its graph $\Gamma_B = \{X + i_XB\ |\ X\in TM\}$; a key property of the Dirac sum is that addition of $\Gamma_B$ coincides with gauge transformation:  for any Dirac structure $L$, 
\begin{equation}\label{gaugesum}
e^B(L) = L + \Gamma_B.
\end{equation}
\begin{proposition}\label{diffdir}\cite[Proposition 4.3]{mgualt-hamdef}
A pair $L_{\cal{A}}, L_{\cal{B}} \subset \bb{T}M \otimes \bb{C}$ of complex Dirac structures defines a GK structure precisely when the following three properties are satisfied:
\begin{enumerate}
\item Both Dirac structures define GC structures, which is to say that $L_{\cal{A}} \cap \ol{L}_{\cal{A}} = 0$ and  $L_{\cal{B}} \cap \ol{L}_{\cal{B}} = 0$.
\item The complex Dirac structures 
\[
L_{\sigma_{+}} = \frac{i}{2} (\ol{L}_{\cal{B}} - \ol{L}_{\cal{A}}), \qquad L_{\sigma_{-}} = \frac{i}{2} (\ol{L}_{\cal{B}} - L_{\cal{A}}),
\]
define holomorphic Poisson structures $(I_{+}, \sigma_{+}), (I_{-}, \sigma_{-})$ respectively.
\item For all nonzero $u \in L_{\cal{A}} \cap L_{\cal{B}}$, we have $\langle u, \ol u \rangle > 0$.
\end{enumerate}
\end{proposition}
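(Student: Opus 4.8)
The plan is to reduce the statement to pointwise linear algebra on the fibres of $\bb{T}_\CC$. Since involutivity of a Dirac structure is preserved under the scaling operation $L\mapsto\lambda L$ and the sum operation $L_1+L_2$, once the relevant sums are defined the structures $L_{\sigma_{\pm}}$ appearing in condition (2) are automatically involutive, so by the criterion quoted from \cite{mgualt-hamdef} the requirement that they define holomorphic Poisson structures collapses to the algebraic condition $T_\CC=(L_{\sigma_{\pm}}\cap T_\CC)\oplus(\ol{L_{\sigma_{\pm}}}\cap T_\CC)$, together with the transversality of tangent projections needed to form the sums. The one identity to record, obtained by unwinding the definitions of $\lambda L$ and $L_1+L_2$, is
\[
L_{\sigma_{+}}\cap T_\CC=\pi(\ol{L}_{\cal A}\cap\ol{L}_{\cal B}),\qquad \ol{L_{\sigma_{+}}}\cap T_\CC=\pi(L_{\cal A}\cap L_{\cal B}),
\]
and likewise $L_{\sigma_{-}}\cap T_\CC=\pi(L_{\cal A}\cap\ol{L}_{\cal B})$, $\ol{L_{\sigma_{-}}}\cap T_\CC=\pi(\ol{L}_{\cal A}\cap L_{\cal B})$, where $\pi\colon\bb{T}_\CC\to T_\CC$ is the projection. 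So, working at a point, I must show that maximal isotropics $L_{\cal A},L_{\cal B}$ with $L_{\cal A}\cap\ol{L}_{\cal A}=L_{\cal B}\cap\ol{L}_{\cal B}=0$ yield commuting generalized complex structures with $\bb G=-\bb J_{\cal A}\bb J_{\cal B}$ positive definite if and only if (2) and (3) hold.

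For the forward implication, assume $(\bb J_{\cal A},\bb J_{\cal B})$ is a GK pair. Then (1) holds by definition, and commutativity yields the simultaneous eigenbundle decomposition $\bb{T}_\CC=\ell_{++}\oplus\ell_{+-}\oplus\ell_{-+}\oplus\ell_{--}$, where $\ell_{\epsilon\delta}$ is the bundle on which $\bb J_{\cal A}=i\epsilon$ and $\bb J_{\cal B}=i\delta$; one reads off $L_{\cal A}\cap L_{\cal B}=\ell_{++}$, $\ol{L}_{\cal A}\cap\ol{L}_{\cal B}=\ol{\ell_{++}}=\ell_{--}$, $C_{+}\otimes\CC=\ell_{++}\oplus\ell_{--}$ and $C_{-}\otimes\CC=\ell_{+-}\oplus\ell_{-+}$. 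Because $C_{\pm}$ project isomorphically onto $TM$, the tangent projections of the sums in (2) are transverse (so the sums are defined), and the same fact gives $\pi(\ell_{++})\oplus\pi(\ell_{--})=\pi(\ell_{+-})\oplus\pi(\ell_{-+})=T_\CC$; combined with the displayed identity this is exactly the transversality needed, so (2) follows. For (3): $\bb G$ is a symmetric involution with $+1$-eigenbundle $C_{+}$, so positivity of $\bb G$ means $\langle\cdot,\cdot\rangle$ is positive definite on $C_{+}$, and for $u\in\ell_{++}=L_{\cal A}\cap L_{\cal B}$ both $u$ and $\ol u$ are isotropic, whence $\langle u+\ol u,u+\ol u\rangle=2\langle u,\ol u\rangle$; this is positive for $u\neq0$, which is (3).

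For the converse, assume (1), (2), (3). Condition (1) produces generalized complex structures $\bb J_{\cal A},\bb J_{\cal B}$, and the crux is to show they commute, equivalently $L_{\cal A}=(L_{\cal A}\cap L_{\cal B})\oplus(L_{\cal A}\cap\ol{L}_{\cal B})$. Set $a=\dim(L_{\cal A}\cap L_{\cal B})$ and $b=\dim(L_{\cal A}\cap\ol{L}_{\cal B})$; these two subspaces of $L_{\cal A}$ intersect only in $0$ (as $L_{\cal B}\cap\ol{L}_{\cal B}=0$), so $a+b\le n=\dim M$, with equality equivalent to commutativity. By (2) the two sums are defined and satisfy the transversality $T_\CC=(L_{\sigma_{\pm}}\cap T_\CC)\oplus(\ol{L_{\sigma_{\pm}}}\cap T_\CC)$, which by the displayed identity gives $\dim\pi(\ol{L}_{\cal A}\cap\ol{L}_{\cal B})+\dim\pi(L_{\cal A}\cap L_{\cal B})=n$ and $\dim\pi(L_{\cal A}\cap\ol{L}_{\cal B})+\dim\pi(\ol{L}_{\cal A}\cap L_{\cal B})=n$. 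Since $\dim\pi(W)\le\dim W$ and conjugation preserves dimension, the first gives $2a\ge n$ and the second $2b\ge n$, hence $a+b\ge n$. Therefore $a+b=n$ and $\bb J_{\cal A},\bb J_{\cal B}$ commute. Given commutativity, $\bb G$ is a symmetric involution whose eigenbundles $C_{\pm}$ are mutually orthogonal with $\bb{T}M=C_{+}\oplus C_{-}$; writing a general element of $C_{+}$ as $u+\ol u$ with $u\in\ell_{++}$, condition (3) identifies $\langle\cdot,\cdot\rangle$ on $C_{+}$ with twice the real part of the positive-definite Hermitian form $(u,v)\mapsto\langle u,\ol v\rangle$, so $C_{+}$ is a maximal positive-definite subspace, $C_{-}=C_{+}^{\perp}$ is negative definite, and $\langle\bb G v,v\rangle>0$ for all $v\neq0$; thus $(\bb J_{\cal A},\bb J_{\cal B})$ is a GK pair. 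That the holomorphic Poisson structures of (2) are those attached to the reconstructed bihermitian data then follows from $L_{\sigma_{\pm}}\cap T_\CC=T^{0,1}_{\pm}$ for the reconstructed $I_{\pm}$.

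The step I expect to be the main obstacle is the converse implication that (2) forces $\bb J_{\cal A}$ and $\bb J_{\cal B}$ to commute: conditions (1) and (3) say nothing about this, so commutativity must be extracted entirely from the fact that two specific linear combinations of $L_{\cal A},\ol{L}_{\cal A},L_{\cal B},\ol{L}_{\cal B}$ are well-defined Dirac structures of holomorphic-Poisson type. The dimension count above is what makes this work, but it rests on two preliminary points that need to be handled carefully: that scaling and summing preserve involutivity (so that being holomorphic Poisson reduces to the stated pointwise transversality), and the precise identification of $L_{\sigma_{\pm}}\cap T_\CC$ and $\ol{L_{\sigma_{\pm}}}\cap T_\CC$ with the four intersections of $L_{\cal A},\ol{L}_{\cal A},L_{\cal B},\ol{L}_{\cal B}$. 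Everything past commutativity is routine fibrewise linear algebra together with the already-quoted characterization of holomorphic Poisson Dirac structures.
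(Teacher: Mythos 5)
This proposition is quoted by the paper from \cite[Proposition 4.3]{mgualt-hamdef} without proof, so there is no in-paper argument to compare against; judged on its own terms, your reconstruction is correct and, to my knowledge, follows the same essential route as the cited source. The two pillars are right: the identification $L_{\sigma_{+}}\cap T_{\CC}=\pi(\ol{L}_{\cal A}\cap\ol{L}_{\cal B})$, $\ol{L_{\sigma_{+}}}\cap T_{\CC}=\pi(L_{\cal A}\cap L_{\cal B})$ (and its $\sigma_{-}$ analogue), which I have checked against the definitions of $\lambda L$ and $L_1+L_2$; and the extraction of commutativity in the converse from the two splitting conditions via $2a\ge n$, $2b\ge n$, $a+b\le n$. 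That dimension count is indeed the non-obvious step, and it is handled correctly. Two small points deserve to be made explicit. First, the count forces $a=b=n/2$ exactly, and this is what you are silently using when you assert that $C_{+}$ is a \emph{maximal} positive-definite subspace: positivity of the Hermitian form on $\ell_{++}$ only makes $C_{+}$ positive-definite of real dimension $2a$, and it is the equality $2a=n$ that lets you conclude $C_{-}=C_{+}^{\perp}$ is negative definite (condition (3) says nothing about $\ell_{+-}$ directly). Second, the appeal to "scaling and summing preserve involutivity" is only needed in the forward direction, where the transversality you establish from $C_{\pm}\otimes\CC$ projecting isomorphically onto $T_{\CC}$ guarantees the sums are smooth bundles, so the standard additivity of the Courant bracket in the form components applies; in the converse direction involutivity of $L_{\sigma_{\pm}}$ is part of the hypothesis (2), so nothing further is required there. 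With those two clarifications the argument is complete.
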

The complex structures $I_\pm$ arising in the Proposition are indeed those of the original bihermitian data, and the Poisson tensors $\sigma_{\pm}$ are those first observed by Hitchin in \cite{MR2217300}. They share a common imaginary part which can be expressed in terms of the original bihermitian data $(g, I_{+}, I_{-})$. More precisely, the holomorphic Poisson structures are given by $\sigma_{\pm} = -\frac{1}{4}(I_{\pm}Q + iQ)$, where $Q = \frac{1}{2}[I_{-},I_{+}] g^{-1}$. 


\subsection{Symplectic type}\label{symptype}
In this paper, we consider only GK structures of symplectic type, defined as follows:
\begin{definition}
A generalized K\"ahler structure $(\JJ_\cal{A},\JJ_\cal{B})$ is of \emph{symplectic type} if $\JJ_\cal{A}$ is gauge equivalent to a symplectic structure. That is,  there is a closed 2-form $\beta$ such that 
\[
e^{\beta}  \JJ_\cal{A} e^{-\beta}  = \begin{pmatrix} 
      0 & -F^{-1} \\
      F & 0 \\
   \end{pmatrix},
\]
where $F$ is a symplectic form.
\end{definition}
The $+i$-eigenbundle of such a $\JJ_\cal{A}$ is then
\begin{equation}\label{ella}
L_{\cal{A}} = \Gamma_{(\beta - iF)},
\end{equation} 
which, using Proposition~\ref{diffdir}, leads to holomorphic Poisson structures given by 
\begin{equation}\label{sumsigma}
L_{\sigma_{+}} =\frac{i}{2}(\ol{L}_{\cal{B}} - \Gamma_{\beta+iF}),   
\qquad 
L_{\sigma_{-}} =\frac{i}{2}(\ol{L}_{\cal{B}} - \Gamma_{\beta-iF}).  
\end{equation}
Since the Dirac sum specializes to a gauge transformation~\eqref{gaugesum}, the above immediately implies that the two Poisson structures are in fact gauge equivalent by the action of the symplectic form:
\[
L_{\sigma_{+}} = e^{F} L_{\sigma_{-}}.
\]
If we unpack what this means in terms of the data $(I_{+}, I_{-}, Q, F)$ of Proposition~\ref{diffdir}, we arrive at the following two equations, first considered in \cite{gualtieri2010branes}:
\begin{align} \label{star1}
&I_{+} - I_{-} = QF, \\
&FI_{+} + I_{-}^{\ast}F = 0. \label{star2}
\end{align} 
Due to the fact that Dirac sum by $\Gamma_{\beta\pm iF}$ is invertible, we see from~\eqref{sumsigma} that we may reconstruct the entire GK structure from $(L_{\sigma_{+}}, L_{\sigma_{-}}, F, \beta)$. For instance, the metric $g$ and $B$-field $b$ of the bihermitian geometry are given as follows:
\begin{equation} 
g = -\frac{1}{2}F(I_{+} + I_{-}),\qquad \ b = \beta - \frac{1}{2}F(I_{+} - I_{-}), \label{metricF}
\end{equation}
and the Dirac structure $L_{\cal{B}}$ is given by 
\begin{equation}
L_{\cal{B}} = e^{\beta + iF}(2i \ol{L}_{\sigma_{-}}). \label{LB}
\end{equation}
On the other hand, it is clear that equations \ref{star1} and \ref{star2} are not sufficient to guarantee that the data $(L_{\sigma_{+}}, L_{\sigma_{-}}, F, \beta)$ defines a GK structure: for example, if $I_{+} + I_{-}$ fails to be invertible, then we can see from equation \ref{metricF} that we will not get a metric. For this reason, we make the following definition:
\begin{definition}
A \emph{degenerate} GK structure of symplectic type consists of the data $(L_{\sigma_{+}}, L_{\sigma_{-}}, F, \beta)$ of two holomorphic Poisson structures $(I_{\pm}, \sigma_{\pm})$ and two closed $2$-forms $F, \beta \in \Omega^{2, cl}(M,\bb{R})$ such that 
\[
L_{\sigma_{+}} = e^{F} L_{\sigma_{-}}.
\]
\end{definition}
\begin{example}\label{trivdeg}
Associated to any holomorphic Poisson structure $(I, \sigma)$ there is a canonical degenerate GK structure given by $(L_{\sigma}, L_{\sigma}, 0, 0)$.
\end{example}
Given a degenerate GK structure, we may construct complex Dirac structures $L_{\cal{A}}, L_{\cal{B}}$ according to equations \ref{ella} and \ref{LB} respectively. Then $L_{\cal{A}}$ defines a GC structure if and only if $F$ is non-degenerate (i.e. symplectic) and $L_{\cal{B}}$ defines a GC structure if and only if $I_{+} + I_{-}$ is invertible. In the case that both these conditions hold then we get a GK structure but with a metric that may be indefinite. Therefore, in order to get a genuine GK structure it is necessary to require that the symmetric tensor $g$ defined through equation \ref{metricF} is positive-definite, in which case we say that $F$ is \emph{positive}. This summarizes the content of the following theorem:
\begin{theorem} \cite[Theorem 6.2] {gualtieri2010branes} \label{GK=star}
There is a bijection between Generalized K\"ahler structures of symplectic type $(L_{\cal{A}}, L_{\cal{B}})$ and degenerate GK structures of symplectic type $(L_{\sigma_{+}}, L_{\sigma_{-}}, F, \beta)$ such that the $2$-form $F$ is positive.
\end{theorem}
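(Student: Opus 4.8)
The plan is to exhibit the two maps of the claimed bijection explicitly, using Proposition~\ref{diffdir} as the bridge, and then check they are mutually inverse. For the forward direction, start with a GK structure $(\JJ_\cal{A},\JJ_\cal{B})$ of symplectic type. By definition there are closed $2$-forms $\beta$ and $F$, with $F$ symplectic, such that $L_\cal{A}=\Gamma_{\beta-iF}$ as in \eqref{ella}. Feeding this into Proposition~\ref{diffdir}, property~(2) yields holomorphic Poisson structures $(I_\pm,\sigma_\pm)$ via the formulas \eqref{sumsigma}; and because the Dirac sum with a graph is a gauge transformation~\eqref{gaugesum}, one reads off $L_{\sigma_+}=e^F L_{\sigma_-}$ directly, so $(L_{\sigma_+},L_{\sigma_-},F,\beta)$ is a degenerate GK structure of symplectic type. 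That $F$ is positive is then precisely the assertion that the symmetric tensor $g$ of \eqref{metricF} is positive definite, which holds because $g$ is the bihermitian metric recovered from the positive-definite generalized metric $\bb{G}=-\JJ_\cal{A}\JJ_\cal{B}$ through the eigenbundle description $C_\pm=\{X+(b\pm g)X\}$.

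Conversely, given a degenerate GK structure $(L_{\sigma_+},L_{\sigma_-},F,\beta)$ with $F$ positive, define $L_\cal{A}$ and $L_\cal{B}$ by \eqref{ella} and \eqref{LB}, and verify the three conditions of Proposition~\ref{diffdir}. Involutivity is automatic: $L_{\sigma_-}$ is Courant-involutive since $(I_-,\sigma_-)$ is holomorphic Poisson, and involutivity is preserved by complex conjugation, by the constant rescaling $L\mapsto 2iL$ (an automorphism of the Courant bracket, merely rescaling the pairing), and by the gauge transformation $e^{\beta+iF}$ since $\beta+iF$ is closed; likewise $L_\cal{A}=\Gamma_{\beta-iF}$ is involutive because $\beta-iF$ is closed. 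For condition~(1), positivity of $F$ makes $g$, hence $F$ and $I_++I_-$, nondegenerate by \eqref{metricF}, and then $L_\cal{A}$ and $L_\cal{B}$ define GC structures exactly as recorded in the text after Example~\ref{trivdeg}, i.e. $L_\cal{A}\cap\overline{L_\cal{A}}=0=L_\cal{B}\cap\overline{L_\cal{B}}$. For condition~(2), the same Dirac algebra as above — expanding \eqref{LB}, converting sums with graphs into gauge transformations via \eqref{gaugesum}, and invoking the hypothesis $L_{\sigma_+}=e^F L_{\sigma_-}$ — shows $\tfrac{i}{2}(\overline{L_\cal{B}}-L_\cal{A})=L_{\sigma_-}$ and $\tfrac{i}{2}(\overline{L_\cal{B}}-\overline{L_\cal{A}})=L_{\sigma_+}$, which are holomorphic Poisson by assumption; note that the relevant sums are all sums with a graph, so the transversality required to form them is automatic. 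Unwinding $L_{\sigma_+}=e^F L_{\sigma_-}$ in terms of $(I_+,I_-,Q,F)$ recovers \eqref{star1} and \eqref{star2}, which is what makes these manipulations close up.

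The crux is condition~(3): for all nonzero $u\in L_\cal{A}\cap L_\cal{B}$ one needs $\langle u,\overline u\rangle>0$, and this must be matched with positivity of $F$. The idea is to describe $L_\cal{A}\cap L_\cal{B}$ explicitly — using \eqref{ella} and \eqref{LB} it is cut out in $\bb{T}_\CC$ by conditions on the $T_{1,0}$- and $T^{*}$-parts controlled by $I_\pm$ and $F$ — and to show that under the resulting isomorphism onto a real tangent-type bundle (the same one identifying $C_\pm$), the Hermitian form $u\mapsto\langle u,\overline u\rangle$ becomes a positive multiple of the tensor $g=-\tfrac12 F(I_++I_-)$ of \eqref{metricF}; positivity of $g$ then gives condition~(3), and conversely condition~(3) forces $g>0$. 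Once this is in hand, the bijectivity is bookkeeping: applying the reverse construction to the output of the forward one returns $\Gamma_{\beta-iF}$ and, by \eqref{LB}, the original $L_\cal{B}$; and applying the forward construction to a degenerate GK structure recovers $\beta,F$ from $L_\cal{A}=\Gamma_{\beta-iF}$ and recovers $L_{\sigma_\pm}$ from the two identities just established. I expect the main obstacle to be exactly this last linear-algebra step in condition~(3) — tracking the split-signature pairing through the explicit form of $L_\cal{A}\cap L_\cal{B}$ to see that it reproduces $g$ — since everything else reduces to formal manipulation of Dirac structures with the tools already assembled in the excerpt.
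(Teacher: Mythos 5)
Your proposal follows essentially the same route as the paper: the theorem is quoted from \cite[Theorem 6.2]{gualtieri2010branes}, and the paper's own justification is precisely the discussion preceding the statement --- construct $L_{\cal{A}}=\Gamma_{\beta-iF}$ and $L_{\cal{B}}=e^{\beta+iF}(2i\ol{L}_{\sigma_-})$, observe that the two GC conditions amount to nondegeneracy of $F$ and of $I_++I_-$, and then impose positivity of $g=-\tfrac{1}{2}F(I_++I_-)$ --- which is exactly what you carry out, in more detail, via the three conditions of Proposition~\ref{diffdir} and the Dirac-sum algebra. The one step you flag as the crux (identifying the Hermitian form $u\mapsto\langle u,\ol{u}\rangle$ on $L_{\cal{A}}\cap L_{\cal{B}}$ with $g$ on $C_+$) is likewise not spelled out in this paper but deferred to the cited reference, so your outline is consistent with, and no less complete than, the paper's own treatment.
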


We will find it useful to study the larger class of degenerate GK structures, applying the condition of positivity of the form $F$ when we want to return to the setting of GK geometry. Also, the action of $B$-field gauge transformations is particularly simple in the case of degenerate GK structures: it acts only (and transitively) on the closed two-form $\beta$:
\[
B : (L_{\sigma_{+}}, L_{\sigma_{-}}, F, \beta) \mapsto (L_{\sigma_{+}}, L_{\sigma_{-}}, F, \beta + B).
\]
In the remainder of this paper we will fix the gauge such that $\beta = 0$. Hence when speaking of degenerate GK structures, we will only specify the data $(L_{\sigma_{+}}, L_{\sigma_{-}}, F)$, or equivalently, $(I_{+}, I_{-}, Q, F)$.

The following is a useful result giving us another means of extracting the metric $g$ and the $B$-field $b$ from the data $(I_{+}, I_{-}, Q, F)$:
\begin{lemma} \label{metric11}
The Hermitian forms $\omega_{\pm} = g I_{\pm}$ associated to the bi-Hermitian data $(g, I_{\pm})$ coincide with the $(1,1)$ components of the symplectic form $F$ relative to the complex structures $I_\pm$:
\[
\omega_{\pm} = F^{(1,1)_{\pm}}.
\]
As a result, we obtain expressions for the metric and $B$-field
\[
g = - F^{(1,1)_{\pm}} I_{\pm},\qquad b = \mp F^{(2,0) + (0,2)_{\pm}} I_{\pm}.
\] 
\begin{proof}
We prove the result for the $I_{-}$ complex structure; the other case is completely analogous. Starting with \eqref{metricF} and using~\eqref{star2}, we have
\[
\omega_{-} = gI_{-} = \frac{1}{2}F(1 - I_{+} I_{-}) = \frac{1}{2}(F + I_{-}^{\ast}FI_{-}) = F^{(1,1)_{-}}.
\] 
The remaining statement is then another application of~\eqref{metricF}.
%
\end{proof}
\end{lemma}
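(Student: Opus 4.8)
The plan is to obtain $\omega_-=gI_-$ by a direct substitution and then to recognise the answer as the $I_-$-invariant part of $F$, using the compatibility relation~\eqref{star2}. Concretely, I would start from the expression $g=-\tfrac{1}{2}F(I_++I_-)$ in~\eqref{metricF} and compute, using $I_-^2=-1$,
\[
\omega_-=gI_-=-\tfrac{1}{2}F(I_++I_-)I_-=\tfrac{1}{2}F(1-I_+I_-).
\]
The crucial step is then to write $F(1-I_+I_-)=F-(FI_+)I_-$ and to invoke~\eqref{star2} in the form $FI_+=-I_-^{\ast}F$, which turns this into $F+I_-^{\ast}FI_-$. Since the $(1,1)$-component of a real two-form relative to a complex structure $I$ is exactly its $I$-invariant part $\tfrac{1}{2}(F+I^{\ast}FI)$ (and the $(2,0)+(0,2)$-component is the anti-invariant part $\tfrac{1}{2}(F-I^{\ast}FI)$), this yields $\omega_-=F^{(1,1)_-}$. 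The $I_+$ statement is the symmetric computation: $\omega_+=gI_+=\tfrac{1}{2}F(1-I_-I_+)$, and one uses the transpose of~\eqref{star2}, $FI_-=-I_+^{\ast}F$, which holds because $F$ is skew, i.e.\ $F^{\ast}=-F$.

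For the two consequences: since $I_\pm^{-1}=-I_\pm$, the identity $\omega_\pm=gI_\pm$ at once gives $g=-\omega_\pm I_\pm=-F^{(1,1)_\pm}I_\pm$. For the $B$-field I would use $F^{(2,0)+(0,2)_\pm}=F-F^{(1,1)_\pm}=F-\omega_\pm=\tfrac{1}{2}F(1+I_\mp I_\pm)$ together with the $\beta=0$ version of~\eqref{metricF}, namely $b=-\tfrac{1}{2}F(I_+-I_-)$; then
\[
F^{(2,0)+(0,2)_\pm}I_\pm=\tfrac{1}{2}F(1+I_\mp I_\pm)I_\pm=\tfrac{1}{2}F(I_\pm-I_\mp)=\mp b,
\]
so $b=\mp F^{(2,0)+(0,2)_\pm}I_\pm$, as claimed.

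The main obstacle here is bookkeeping rather than any real difficulty: one must be consistent about identifying a two-form $F$ with the bundle map $TM\to T^{\ast}M$, about reading $I^{\ast}FI$ as the pullback form $F(I\,\cdot\,,I\,\cdot\,)$, and about tracking the signs coming from $I_\pm^2=-1$ and $F^{\ast}=-F$ when passing between~\eqref{star2} and its transpose. Once these conventions are fixed, each case is a one-line computation whose entire content is the single substitution of~\eqref{star2}.
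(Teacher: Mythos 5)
Your proof is correct and follows essentially the same route as the paper: starting from $g=-\tfrac{1}{2}F(I_++I_-)$ in~\eqref{metricF}, substituting~\eqref{star2} to rewrite $\tfrac{1}{2}F(1-I_+I_-)$ as the $I_-$-invariant part $\tfrac{1}{2}(F+I_-^{\ast}FI_-)=F^{(1,1)_-}$, and then deducing the formulas for $g$ and $b$ from~\eqref{metricF}. Your additional observation that the $I_+$ case requires the transposed relation $FI_-=-I_+^{\ast}F$ (which follows from~\eqref{star2} and $F^{\ast}=-F$) correctly fills in what the paper dismisses as ``completely analogous.''
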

\begin{remark}
The above Lemma holds true even in the case that $g$ and $F$ are degenerate. 
\end{remark}

\section{Introduction to the problem: the K\"ahler case}\label{Donkahler}

It is fundamental to K\"ahler geometry that, given the underlying complex manifold, one can describe the K\"ahler metric locally by specifying a single real-valued function: the K\"ahler potential.  For this reason, we may think of the complex moduli and the potential function as independent degrees of freedom comprising the K\"ahler structure.
The main problem we solve is to determine the analogous degrees of freedom inherent in Generalized K\"ahler geometry. More precisely, we give an answer to the following two questions in the setting of GK structures of symplectic type:
\begin{enumerate}
\item What is the holomorphic structure underlying a Generalized K\"ahler manifold?
\item What additional data is needed to specify the Riemannian metric, and how is it determined locally by a real-valued function?
\end{enumerate}
To understand the methods we use to solve these questions, it is helpful to revisit them in the usual K\"ahler case, where our approach coincides with a reformulation of K\"ahler geometry first considered by Donaldson in his study of the complex Monge-Amp\`{e}re equation \cite{donaldson2002holomorphic}.

Let $X = (M,I)$ be a complex manifold equipped with a K\"ahler metric $g$. Notice that this defines a GK structure of symplectic type where $I_{+} = I_{-} = I$ and $F = \omega=gI$ is the K\"ahler form. The Hitchin Poisson structure in this case vanishes: $Q = \frac{1}{2}[I,I] g^{-1} = 0.$  We begin by explaining how the K\"ahler structure defines a deformation of the holomorphic cotangent bundle $T^\ast X$.
First, cover the manifold $X$ by open sets $U_{i}$ on which a K\"ahler potential $K_{i} \in C^{\infty}(U_{i}, \bb{R})$ is chosen: $\omega = i \partial \ol{\partial} K_{i}$. On the double overlap $U_{i} \cap U_{j}$ we have $\partial \ol{\partial}(K_{i} - K_{j}) = 0$ and so $\mu_{ij} = i\del(K_i-K_j)$ defines a closed holomorphic $(1,0)$-form.  Therefore, we can define a holomorphic affine transformation on the double overlap:
 \[
 A_{ij} : T^{\ast}U_{i}|_{U_{i} \cap U_{j}} \longrightarrow T^{\ast}U_{j}|_{U_{i} \cap U_{j}},\qquad \alpha_{x} \mapsto \alpha_{x} + \mu_{ij}(x).
 \]
In fact, $A_{ij}$ is also a symplectomorphism: if $\Omega_0$ is the canonical symplectic form on $T^\ast X$, then
\[
A_{ij}^{\ast} (\Omega_0) = \Omega_0 + \pi^{\ast} \mu_{ij}^{\ast}(\Omega_0) = \Omega_0 + \pi^{\ast} d\mu_{ij} = \Omega_0,
\]
where $\pi : T^{\ast}X \to X$ is the vector bundle projection. Since $A_{ij}$ defines a cocycle, we may define an affine bundle $Z$ modelled on $T^{\ast}X$ as follows:
\[
Z = (\coprod_{i} T^{\ast}U_{i}) /\sim, \ \ (\alpha_{x})_{i} \sim (A_{ij}(\alpha_{x}))_{j}.
\]
Then $Z$ inherits a symplectic form $\Omega$, making it into a holomorphic symplectic manifold. In fact, more is true: since the symplectic form on $T^{\ast}X$ is compatible with the additive structure, it follows that the form $\Omega$ is compatible with the action of $T^{\ast}X$ on $Z$, in the sense that the graph of the action map $A : T^{\ast}X \times_{X} Z \to Z$ in $(T^{\ast}X, \Omega_0) \times (Z, \Omega) \times (Z, - \Omega)$ is a holomorphic Lagrangian submanifold. This holomorphic symplectic affine bundle $(Z, \Omega)$ is what encodes the underlying holomorphic structure of the K\"ahler manifold as well as the K\"ahler class.

To see where the metric comes in, observe that the potentials $K_{i}$ provide us with a global section of $Z \to X$. More precisely, if we define $\cal{L}_{i} = - i \partial K_{i} : U_{i} \to T^{\ast}U_{i}$, then because 
\[
A_{ij} \circ \cal{L}_{i}(x) = -i \partial K_{i} + \mu_{ij} =  -i \partial K_{i} + i \partial(K_{i} - K_{j}) = \cal{L}_{j},
\]
we get a global section $\cal{L} : X \to Z$. This section fails to be holomorphic, but it has the interesting property of being Lagrangian with respect to $\Im(\Omega)$ and symplectic with respect to $\Re(\Omega)$. Indeed, computing the pullback we see that 
\[
\cal{L}^{\ast}(\Omega) = \cal{L}_{i}^{\ast}(\Omega_0) = d(-i\partial K_{i}) = i \partial \ol{\partial}K_{i} = \omega,
\]
recovering the K\"ahler form and hence the metric $g$. Submanifolds such as the image of $\cal{L}$, which are Lagrangian with respect to $\Im(\Omega)$ and which provide sections of the projection $Z \to X$, are what we will come to call \emph{brane bisections}. They are what encode the extra data needed to recover the metric in GK geometry.

Let us note one more thing about the affine bundle $Z$: using the brane $\cal{L}$ we can define a diffeomorphism between the underlying smooth manifolds
\[
\psi : T^{\ast} X \to Z, \ \  \alpha_{x} \mapsto \alpha_{x} + \cal{L}(x).
\]
Using the above mentioned fact that the symplectic forms are compatible with the affine bundle structure we see that 
\[
\psi^{\ast} \Omega = \Omega_0 + \pi^{\ast} \cal{L}^{\ast} \Omega = \Omega_0 + \pi^{\ast} \omega.
\]
In other words, the affine bundle $(Z, \Omega)$ is nothing but a \emph{twisted cotangent bundle} $(T^{\ast}X, \Omega_0 + \omega)$, and the brane bisection $\cal{L}$ corresponds simply to the zero section. 

Having separated the K\"ahler degrees of freedom into the holomorphic moduli of $(Z,\Omega)$ and the choice of a smooth brane $\cal{L}$, it is natural to study the deformations of $\cal{L}$ keeping $(Z,\Omega)$ fixed. Any deformation $\cal{L}' : X \to Z$ of $\cal{L}$ is given by the graph of a $(1,0)$-form $\alpha$. Pulling back the symplectic form, we get
\[
\omega' := (\cal{L}')^{\ast} \Omega = \alpha^{\ast} (\Omega_0 + \pi^{\ast} \omega) = \omega + d\alpha.
\]
That $\cal{L}'$ is a brane imposes the condition that $\omega'$ is real; using the $\del\delbar$-lemma, we conclude that $\omega' - \omega = i \partial \ol{\partial} f$, for a real-valued function $f$. In short, varying $\mathcal{L}$ with $(Z,\Omega)$ fixed is equivalent to varying $\omega$ within its K\"ahler class. 

\begin{proposition}\cite[Section~2.]{donaldson2002holomorphic}
On a K\"ahler manifold, the cohomology class $[\omega]$ of the K\"ahler form determines a holomorphic symplectic affine bundle $(Z, \Omega)$ modelled on the cotangent bundle, and the metric $g$ determines a smooth section $\cal{L}$ of the bundle $Z$ which is symplectic for $\Re(\Omega)$ and Lagrangian for $\Im(\Omega)$.  Conversely, this data $(Z,\Omega,\cal{L})$ uniquely determines the K\"ahler structure.  Under this correspondence, deforming $\cal{L}$ is equivalent to varying $\omega$ within the K\"ahler class.
\end{proposition}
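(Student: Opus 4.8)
The plan is to assemble the proposition from the pieces already exhibited in the running text of this section, and then verify that the correspondence is indeed a bijection. Since essentially every construction has been carried out explicitly above, the proof is mostly a matter of organizing these observations and checking that the two directions are mutually inverse; the real content was already done in the preceding paragraphs, following \cite{donaldson2002holomorphic}.

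First I would recall the construction of $(Z,\Omega)$ from the data: cover $X$ by charts $U_i$ with local K\"ahler potentials $K_i$ (which exist by the local $\del\delbar$-lemma applied to $\omega$), form the closed holomorphic $(1,0)$-forms $\mu_{ij} = i\del(K_i-K_j)$, and glue the local cotangent bundles $T^*U_i$ via the affine symplectomorphisms $A_{ij}(\alpha_x) = \alpha_x + \mu_{ij}(x)$. One checks the cocycle condition $A_{ij}A_{jk}=A_{ik}$ on triple overlaps (immediate from $\mu_{ij}+\mu_{jk}=\mu_{ik}$), so $Z$ is well-defined, and the local canonical symplectic forms agree on overlaps because $A_{ij}^*\Omega_0=\Omega_0$, yielding the holomorphic symplectic form $\Omega$. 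Independence of the choice of potentials: a different choice $K_i'$ differs on each $U_i$ by the real part of a holomorphic function plus a constant, which changes $\mathcal{L}_i$ but not the transition maps up to an overall affine automorphism of the $T^*U_i$, hence gives an isomorphic $(Z,\Omega)$; and $(Z,\Omega)$ depends only on $[\omega]$ because two K\"ahler forms in the same class differ by $i\del\delbar f$ globally, which can be absorbed into the $K_i$. Then I would recall that $\mathcal{L}_i = -i\del K_i$ satisfies $A_{ij}\circ\mathcal{L}_i = \mathcal{L}_j$, giving a global smooth section $\mathcal{L}:X\to Z$, and that $\mathcal{L}^*\Omega = d(-i\del K_i) = i\del\delbar K_i = \omega$, which is real and nondegenerate — so $\mathcal{L}$ is Lagrangian for $\Im\Omega$ and symplectic for $\Re\Omega$, as claimed.

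For the converse, given abstract data $(Z,\Omega,\mathcal{L})$ of a holomorphic symplectic affine bundle modelled on $T^*X$ together with such a section, I would use the $T^*X$-action to define the diffeomorphism $\psi:T^*X\to Z$, $\alpha_x\mapsto \alpha_x+\mathcal{L}(x)$; compatibility of $\Omega$ with the affine structure (the graph of the action being Lagrangian) gives $\psi^*\Omega = \Omega_0 + \pi^*(\mathcal{L}^*\Omega)$, so setting $\omega := \mathcal{L}^*\Omega$ we recover $(Z,\Omega)$ as the twisted cotangent bundle $(T^*X,\Omega_0+\omega)$ with $\mathcal{L}$ the zero section. The hypothesis that $\mathcal{L}$ is symplectic for $\Re\Omega$ and Lagrangian for $\Im\Omega$ forces $\omega$ to be a real nondegenerate $(1,1)$-form; positivity is built into the statement (a K\"ahler metric is being recovered, so one assumes $\mathcal{L}$ positive), and then $g = \omega I^{-1}$ is the K\"ahler metric, whose cohomology class is the one determining $Z$. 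Checking the two constructions are inverse is then routine: starting from $g$, building $(Z,\Omega,\mathcal{L})$, and running the converse returns the twisted cotangent bundle picture with $\mathcal{L}^*\Omega=\omega=gI$.

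Finally, for the deformation statement I would take any other section $\mathcal{L}':X\to Z$; under $\psi^{-1}$ it becomes the graph of a smooth $(1,0)$-form $\alpha$ on $X$, and $(\mathcal{L}')^*\Omega = \alpha^*(\Omega_0+\pi^*\omega) = \omega + d\alpha$. Requiring $\mathcal{L}'$ to be a brane (real pullback) forces $\omega+d\alpha$ to be a real $(1,1)$-form in $[\omega]$, and conversely every K\"ahler form $\omega'\in[\omega]$ arises this way since $\omega'-\omega = i\del\delbar f = d(i\del f)$ for a global real $f$ by the $\del\delbar$-lemma, giving $\alpha = i\del f$ up to a closed holomorphic form (which shifts $\mathcal{L}'$ by a locally constant section, i.e.\ a deck-type ambiguity that doesn't affect $\omega'$). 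The one genuine subtlety — and the step I'd expect to take the most care — is the global existence of $f$ with $\omega'-\omega=i\del\delbar f$, i.e.\ the global $\del\delbar$-lemma on a compact (or otherwise cohomologically well-behaved) K\"ahler manifold; Donaldson's setting makes this available, and it is exactly the classical fact that underlies the whole correspondence. Everything else is bookkeeping with the explicit transition functions.
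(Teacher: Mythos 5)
Your proposal is correct and follows essentially the same route as the paper, which presents this proposition as a summary of the constructions carried out in the preceding paragraphs of Section~\ref{Donkahler}: the gluing of $T^*U_i$ by the affine symplectomorphisms $A_{ij}$, the section $\cal{L}_i=-i\del K_i$ with $\cal{L}^*\Omega=\omega$, the identification $\psi$ with the twisted cotangent bundle, and the $\del\delbar$-lemma argument for the deformation statement. The extra details you supply (cocycle condition, independence of the choice of potentials, dependence only on $[\omega]$) are routine verifications consistent with the paper's treatment rather than a different argument.
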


As we will see, the situation is much the same in the general case of GK geometry of symplectic type. We will again be able to encode the GK geometry in terms of a brane bisection in a holomorphic symplectic manifold. The crucial difference will be that the symplectic manifold, far from being a twisted cotangent bundle, is determined by the Hitchin Poisson structure according to the theory of symplectic Morita equivalence.
%
%
%
%

\section{Holomorphic symplectic Morita equivalence}\label{moreq}
%
In order to access the holomorphic structure underlying GK geometry, we need to begin with the underlying holomorphic Poisson structures $\sigma_{\pm}$. In this section, we recall the necessary tools from Poisson geometry. 

A fundamental feature of Poisson geometry is that a Poisson structure $Q$ determines a (singular) foliation of the underlying manifold $M$ by \emph{symplectic leaves}. These leaves are equivalence classes of points related by Hamiltonian flows. Taking the quotient of $M$ by this equivalence relation defines a generalized space, which can be suitably upgraded to a differentiable stack. Viewed in this way, i.e., as stacks, Poisson manifolds inherit a more general notion of morphism than the usual Poisson maps. 
Such generalized morphisms between the associated stacks may be conveniently described in terms of spans of Poisson maps, as follows. A generalized morphism between Poisson manifolds $(M_{1}, Q_{1})$ and $(M_{2}, Q_{2})$ is given by a symplectic manifold $(S, \omega)$, together with Poisson maps 
\[
\xymatrix{ &(S, \omega)\ar[dl]\ar[dr] & \\(M_{1}, Q_{1}) & &(M_{2}, -Q_{2}) }
\]
satisfying a list of properties (see \cite{MR2166451} for more details). Note that a symplectic manifold $(S, \omega)$ equipped with a Poisson map to $(M, Q)$ is called a \emph{symplectic realization}, a concept introduced by Weinstein in \cite{weinstein1983local}. Poisson manifolds, spans of symplectic realizations, and isomorphisms between spans assemble into a $2$-category. The equivalences in this category are the main objects of interest in our study of GK geometry of symplectic type. These are known as \emph{Morita equivalences} and were first introduced by Xu \cite{xu1991morita} who gave the following definition:

\begin{definition}[Morita equivalence] A Morita equivalence between Poisson manifolds $(M_{1}, Q_{1})$ and $(M_{2}, Q_{2})$ is a symplectic manifold $(S, \omega)$ together with a pair of surjective submersions $\pi_{1} : S \to M_{1}, \pi_{2} : S \to M_{2}$ with connected and simply-connected fibres such that 
\begin{enumerate}
\item $\pi_{1}$ is Poisson and $\pi_{2}$ is anti-Poisson;
\item the vertical distributions $\ker(d\pi_{1})$ and $\ker(d\pi_{2})$ are symplectically orthogonal; 
\item $\pi_{1},\pi_2$ are \emph{complete} in the sense that the pullback of any complete Hamiltonian vector field is complete.
\end{enumerate}
We view this as a morphism from $(M_{2}, Q_{2})$ to $(M_{1}, Q_{1})$.
\end{definition}
The identity Morita self-equivalence $(\Sigma(M), \Omega)$ of a Poisson manifold $(M, Q)$ comes equipped with an isomorphism $\Sigma(M) \circ \Sigma(M) \cong \Sigma(M)$, reflecting the fact that composition with the identity is trivial up to isomorphism. This endows the space $\Sigma(M)$ with the structure of a Lie groupoid, making it into a \emph{symplectic groupoid}, a notion due to Karas\"ev, Weinstein and Zakrzewski \cite{MR854594,weinstein1987symplectic,MR1081010,MR1081011}.
\begin{definition}[Symplectic groupoid] A symplectic groupoid $(\cal{G} \rightrightarrows M, \Omega)$ is a Lie groupoid $\cal{G}$ equipped with a symplectic structure $\Omega$ such that the graph of the multiplication map is a Lagrangian submanifold of $\cal{G} \times \bar{\cal{G}} \times \bar{\cal{G}} $, where $\bar{\cal{G}}$ denotes $\cal{G}$ with the opposite symplectic form $-\Omega$.
\end{definition}
\begin{remark}
A symplectic form on $\cal{G}$ satisfying the above property is called a \emph{multiplicative form}. Equivalently, $\Omega$ satisfies the following equation on the fibre product $\cal{G} \times_{M} \cal{G}$
\[
m^{\ast} \Omega = p_{1}^{\ast} \Omega + p_{2}^{\ast} \Omega,
\]
where $m : \cal{G} \times_{M} \cal{G} \to \cal{G}$ is the groupoid multiplication, and $p_{i} : \cal{G} \times_{M} \cal{G} \to \cal{G}$ are the two projections.
\end{remark}
Given a Poisson manifold, the existence of an identity Morita self-equivalence is governed by whether a certain Lie algebroid determined by the Poisson structure is integrable to a Lie groupoid.  
For this reason we call $(\Sigma(M), \Omega)$ an \emph{integration} of the Poisson manifold, and we call the Poisson manifold \emph{integrable} if such an integration exists. In the category of smooth Lie groupoids this problem was solved by Crainic and Fernandes \cite{crainic2003integrability}, inspired by the work of Cattaneo and Felder \cite{cattaneo2001poisson}. In the holomorphic category, this problem was studied by Laurent-Gengoux, Sti\'{e}non and Xu \cite{laurent2009integration}, who showed that a holomorphic Poisson manifold is integrable if and only if its underlying real or imaginary part is an integrable real Poisson structure. We assume that all our Poisson manifolds are integrable. A given Poisson manifold may have several different integrations, but there is a unique \emph{source simply connected} one, characterized by the fact that the fibers of the source map are connected and simply connected \cite[Proposition
6.8]{MR2012261}; this is the identity Morita self-equivalence $\Sigma(M)$ above, and we refer to it as the \emph{Weinstein groupoid}. 

Let $(S, \omega)$ be a Morita equivalence between Poisson manifolds $(M_{1}, Q_{1})$ and $(M_{2}, Q_{2})$. Since composition with the identity Morita equivalence is trivial up to isomorphism, we obtain isomorphisms $\Sigma(M_{1}) \circ S \cong S$ and $S \circ \Sigma(M_{2}) \cong S$. These define actions of the groupoids $\Sigma(M_{i})$ on the space $S$, endowing $S$ with the structure of a \emph{bi-principal groupoid bi-bundle} for the pair $(\Sigma(M_{1}), \Sigma(M_{2}))$. Most importantly for our purposes, these actions are symplectic in the sense that the graphs of the action maps are Lagrangian submanifolds in the product $\Sigma(M_{i}) \times S \times \bar{S}$, where $\bar{S}$ denotes $S$ with the opposite symplectic form. In this way, we make contact with the theory of Morita equivalence for symplectic Lie groupoids. The following theorem of Xu tells us that the notions of Morita equivalence for symplectic groupoids and Poisson manifolds agree.
 \begin{theorem}\cite[Theorem 3.2]{xu1991morita}
Two integrable Poisson manifolds $(M_{1}, Q_{1})$ and $(M_{2}, Q_{2})$ are Morita equivalent if and only if their Weinstein groupoids $\Sigma(M_{1}), \Sigma(M_{2})$ are (symplectically) Morita equivalent. 
\end{theorem}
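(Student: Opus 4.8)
The plan is to establish the equivalence by going in both directions, using the identity-Morita-self-equivalence as a bridge in both cases. The key structural fact we rely on from the discussion above is that the Weinstein groupoid $\Sigma(M)$ is both a symplectic groupoid integrating $(M,Q)$ and, simultaneously, the identity Morita self-equivalence of $(M,Q)$ in the $2$-category of Poisson manifolds and symplectic realization spans; moreover composition of Morita equivalences is associative and unital up to canonical isomorphism. Throughout we use that all our Poisson manifolds are assumed integrable, so that $\Sigma(M_1)$ and $\Sigma(M_2)$ exist.

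\textbf{($\Rightarrow$)} Suppose $(S,\omega)$ is a Morita equivalence between $(M_1,Q_1)$ and $(M_2,Q_2)$ as Poisson manifolds. As explained above, the isomorphisms $\Sigma(M_1)\circ S\cong S$ and $S\circ\Sigma(M_2)\cong S$ coming from unitality of composition endow $S$ with commuting left and right actions of $\Sigma(M_1)$ and $\Sigma(M_2)$, and these actions are symplectic, meaning the graphs of the action maps are Lagrangian in $\Sigma(M_i)\times S\times\bar S$. The principality conditions (free and proper actions with $S/\Sigma(M_2)\cong M_1$ and $\Sigma(M_1)\backslash S\cong M_2$) follow from the fact that $\pi_1,\pi_2$ are surjective submersions with connected, simply connected fibres: the fibres of $\pi_i$ are precisely the orbits of the $\Sigma(M_j)$-action, and simple-connectivity is exactly what is needed to identify them with the (connected, simply connected) source fibres of the Weinstein groupoid, giving freeness and principality. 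Hence $(S,\omega)$ is a symplectic Morita equivalence of the symplectic groupoids $\Sigma(M_1),\Sigma(M_2)$ in the sense of groupoid bi-bundles.

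\textbf{($\Leftarrow$)} Conversely, suppose $(S,\omega)$ is a symplectic Morita equivalence of the Weinstein groupoids $\Sigma(M_1)\rightrightarrows M_1$ and $\Sigma(M_2)\rightrightarrows M_2$. Let $\pi_1:S\to M_1$ and $\pi_2:S\to M_2$ be the two moment/anchor maps of the bi-bundle. One checks that $\pi_1$ is a Poisson map and $\pi_2$ is anti-Poisson: this is built into the requirement that the action graphs be Lagrangian, since the source and target maps of a symplectic groupoid are Poisson and anti-Poisson respectively and the moment maps of a symplectic bi-bundle intertwine these. Principality of the $\Sigma(M_j)$-actions forces the fibres of $\pi_i$ to be the orbits, hence connected (source fibres of $\Sigma(M_j)$ are connected) and simply connected (they are source fibres of a \emph{source-simply-connected} groupoid). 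The symplectic orthogonality of $\ker d\pi_1$ and $\ker d\pi_2$ follows from the moment map conditions together with the multiplicativity of the symplectic forms on the $\Sigma(M_i)$, and completeness of $\pi_1,\pi_2$ follows from properness/completeness of the groupoid actions. Thus $(S,\omega)$ satisfies all the axioms of a Morita equivalence of Poisson manifolds.

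\textbf{Main obstacle.} The routine-looking part — matching up the defining lists of axioms — is genuinely where the work lies, and the subtlest point is the translation between the ``span of symplectic realizations with simply connected fibres'' picture and the ``principal symplectic bi-bundle'' picture: one must verify carefully that a surjective submersion with connected and simply connected fibres that is compatible with the symplectic data is automatically a principal bundle for the relevant Weinstein groupoid action, and conversely that a principal bi-bundle's moment maps have connected, simply connected fibres. This hinges on the characterization of the source-simply-connected integration and on the fact that the orbit foliation of the $\Sigma(M_j)$-action on $S$ coincides with the fibration by $\pi_i$; establishing this coincidence, and checking the symplectic orthogonality condition corresponds exactly to the two actions being ``orthogonal complements'' in the sense of symplectic bi-bundles, is the technical heart of the argument. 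The completeness condition is comparatively mild but must not be overlooked, as it is what guarantees the bi-bundle structure globalizes rather than merely existing infinitesimally.
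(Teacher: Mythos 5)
The paper does not actually prove this statement: it is quoted verbatim from Xu's paper with a citation, so there is no internal proof to compare against. Judged on its own terms, your outline follows the standard (indeed, Xu's own) strategy and correctly locates the technical heart of the matter: the identification of the $\pi_i$-fibres with the orbits of the $\Sigma(M_j)$-action, the use of source-simple-connectedness to upgrade the orbit covering maps to diffeomorphisms (hence freeness and principality), and the fact that symplectic orthogonality of $\ker d\pi_1$ and $\ker d\pi_2$ is exactly the statement that each fibration is the symplectic orthogonal of the other's orbit foliation.

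One point of logic is inverted in your forward direction, and it is worth fixing. You derive the actions of $\Sigma(M_1)$ and $\Sigma(M_2)$ on $S$ from ``unitality of composition,'' but composition $\Sigma(M_1)\circ S$ is itself defined as a quotient of a fibre product by a groupoid action, so invoking unitality presupposes the very actions you are trying to construct. The correct order is: a complete symplectic realization $\pi_1:S\to M_1$ carries an infinitesimal action of the Lie algebroid $T^*M_1$ by $\alpha\mapsto \omega^{-1}(\pi_1^*\alpha)$, and it is \emph{completeness} together with source-simple-connectedness of $\Sigma(M_1)$ that allows this to integrate to a global groupoid action; unitality of composition is then a consequence, not a premise. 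Your closing paragraph does flag completeness, but assigns it only the role of ``globalizing the bi-bundle structure,'' when in fact it is what makes the action exist at all. With that emphasis corrected, the plan is sound.
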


\subsection{Examples} We end this section with examples of symplectic groupoids and Morita equivalences.

\begin{example}[Zero Poisson structure]
The Weinstein groupoid integrating the zero poisson structure on a manifold $M$ is given by the cotangent bundle $T^{\ast}M$ with its canonical symplectic form and groupoid multiplication given by fibrewise addition. 
\end{example}

\begin{example}[Symplectic manifold] \label{symplectic manifold groupoid}
The Weinstein groupoid of a simply connected symplectic manifold $(M, \omega)$ is given by the \emph{pair groupoid} $Pair(M) = (M \times M, \omega \oplus -\omega)$. The source and target maps are given by the two projections and the multiplication is given by the formula $(a,b)*(b,c) = (a,c)$. If $M$ is not simply connected, the pair groupoid still provides an integration of $(M,\omega)$, but now the Weinstein groupoid is given by the fundamental groupoid $\Pi(M)$, with symplectic form $\Omega = t^{\ast}\omega - s^{\ast}\omega$, where $t$ and $s$ are the target and source maps, respectively.
\end{example}

\begin{example}[Lie-Poisson structure]
Let $\mathfrak{g}$ be a Lie algebra. Its dual $\mathfrak{g}^{\ast}$ has a natural Poisson bracket extending the Lie bracket on $\mathfrak{g}$. 
If $G$ is a Lie group integrating $\mathfrak{g}$ then $T^{*}G$ with its canonical symplectic form gives a symplectic groupoid integrating $\mathfrak{g}^{\ast}$. The source and target maps are given by left and right trivialization respectively, and the multiplication is given by the following formula:
\[
\xi_{g_{1}} \star \eta_{g_{2}} := d(R_{g_{2}^{-1}})^{\ast}_{g_{1}g_{2}}(\xi_{g_{1}}) = d(L_{g_{1}^{-1}})^{\ast}_{g_{1}g_{2}}(\eta_{g_{2}}),
\]
where $R_{g}$ and $L_{g}$ are right and left multiplication, respectively. By using left or right trivialization we can see that this groupoid is isomorphic to the action groupoid induced by the coadjoint action of $G$ on $\mathfrak{g}^{\ast}$. See \cite{coste1987groupoides} for further details.
\end{example}
\begin{example}\label{affpois}
As a concrete instance of the previous example, consider the linear Poisson structure on $\bb{C}^{2}$ given by $\Pi = x \partial_{x} \wedge \partial_{y}$. This is a Poisson structure corresponding to the Lie algebra $\frak{g} = \bb{C}x \oplus \bb{C}y$ with bracket $[x,y] = x$. The Lie group integrating this algebra is the group $G$ of affine transformations of $\bb{C}$ (or rather the universal cover of this). Therefore we can obtain the symplectic groupoid integrating our Poisson structure as the action groupoid for the coadjoint action of $G$ on $\frak{g}^{*}$.

However, in the present case a more direct approach to integration is possible: note that the Hamiltonian vector fields corresponding to the coordinate functions are given by $X_{x} = x \partial_{y}$ and $X_{-y} = x \partial_{x}$. The vector field $x \partial_{x}$ generates the multiplicative action of $\bb{C}$: $(a,x) : x \mapsto e^{a}x$, and the vector field $x \partial_{y}$ generates the additive action of $\bb{C}$ rescaled by a factor of $x$: $(b,y): y \mapsto y + xb$. Combining the two actions we get the action groupoid $\cal{G} = \bb{C}^{2} \ltimes \bb{C}^{2}$, with coordinates $(a,b,x,y)$, with the source and target maps given respectively by 
 \[
 s(a,b,x,y) = (x,y) \qquad t(a,b,x,y) = (e^{a}x, y + xb),
 \]
and with the multiplication given by 
\[
(a_{1}, b_{1}, x_{1}, y_{1}) \star (a_{2}, b_{2}, x_{2}, y_{2}) = (a_{1} + a_{2}, b_{1} e^{a_{2}} + b_{2}, x_{2}, y_{2}).
\]
Note that indeed $\cal{G} = \widetilde{\text{Aff}(\bb{C})} \ltimes \bb{C}^{2}$: the action groupoid for the action of (the universal cover of) the affine group on the dual of its Lie algebra. 

To compute the symplectic form on $\cal{G}$ we make use of the following fact: in the case of an invertible Poisson structure with corresponding symplectic form $\omega$, the symplectic form on the integrating groupoid is given by $\Omega = t^{\ast} \omega - s^{\ast} \omega$ (see Example~\ref{symplectic manifold groupoid} above). In the present case the Poisson structure is invertible on a dense subset, corresponding to the meromorphic symplectic form
\[
\omega = \frac{1}{x} dx \wedge dy.
\] 
Hence, it follows that the symplectic form on the groupoid $\cal{G}$ is given by the same formula:
\begin{align*}
\Omega &= t^{\ast} \omega - s^{\ast} \omega \\
 &=  \frac{1}{e^{a}x}d(e^{a}x)\wedge d(y + xb) - \frac{1}{x} dx \wedge dy \\
&= da \wedge d(y + xb) - db \wedge dx.
\end{align*}
See \cite{radko2006picard} for further discussion of this example in the smooth category. 
\end{example}

We now consider examples of Morita equivalences. 
\begin{example}
Recall from the discussion above that the Weinstein groupoid of a Poisson manifold $(M,Q)$ provides the trivial Morita self-equivalence of $Q$. Therefore all of the above examples of groupoids also give examples of Morita equivalences.
\end{example}
\begin{example}
Let $S_{1}$ and $S_{2}$ be simply connected symplectic manifolds. Then $S_{1} \leftarrow S_{1} \times \bar{S_{2}} \to S_{2}$ defines a Morita equivalence. More generally, two symplectic manifolds are Morita equivalent if and only if they have the same fundamental group. See \cite{xu1991morita} for details.
\end{example}

\begin{example}
Morita equivalences can be composed with Poisson diffeomorphisms. Let $\sigma : M_{2} \to M_{1}$ be a Poisson diffeomorphism and let $M_{2} \xleftarrow{\pi_{2}} S \xrightarrow{\pi_{3}} M_{3}$ be a Morita equivalence. Then we get the following Morita equivalence
\[
\xymatrix{ &S\ar[dl]_-{\sigma \circ \pi_{2}}\ar[dr]^-{\pi_{3}} & \\M_{1} & &M_{3} }
\]
In particular, by composing Poisson diffeomorphisms with the trivial Morita equivalence we get a map from Poisson diffeomorphisms to Morita equivalences.
\end{example}

\begin{example}\label{btrans}
Let $(M,Q)$ be a Poisson manifold. If $B \in \Omega^{2, cl}(M)$ is a closed $2$-form such that $id + B \circ Q: T^{\ast}M \to T^{\ast}M$ is invertible then we can define a new Poisson structure 
\[
Q^{B} := Q \circ (id + B \circ Q)^{-1},
\]
which is called the \emph{$B$-field transform} of $Q$. Bursztyn and Radko have shown in \cite{bursztyn2003gauge} that in this case $(M,Q)$ and $(M, Q^{B})$ are Morita equivalent in the following way 
\[
\xymatrix{ &(\Sigma(M), \Omega + t^{\ast}B)\ar[dl]_-t\ar[dr]^-s & \\(M, Q^{B}) & &(M, Q) }
\]
where $(\Sigma(M), \Omega)$ is the Weinstein groupoid of $(M,Q)$ and $s$ and $t$ are the source and target maps, respectively. In particular, if we apply this construction to the zero Poisson structure $(M, Q= 0)$ then we get a Morita self-equivalence given by $(T^{\ast}M, \Omega_0 + p^{*}B)$, where $\Omega_0$ is the canonical symplectic form and $p : T^{\ast}M \to M$ is the projection.
\end{example}

\section{Generalized K\"ahler metrics as brane bisections}\label{branbi}
For a GK structure of symplectic type, we have seen that there are two underlying holomorphic Poisson structures $\sigma_\pm$.  These Poisson manifolds are not biholomorphically equivalent in general; in fact, the underlying complex manifolds $(M, I_\pm)$ may not even be isomorphic.  But, in analogy to Example~\ref{btrans}, it was shown in~\cite{bailey2016integration} that they are actually holomorphically Morita equivalent.

\begin{proposition}\cite[Proposition 6.4]{bailey2016integration} \label{Star->ME}
Let $(I_{+}, I_{-}, Q, F)$ be a degenerate GK structure of symplectic type, and let $\sigma_{\pm} = -\frac{1}{4}(I_{\pm}Q + iQ)$ denote the corresponding holomorphic Poisson structures on the complex manifolds $X_{\pm} := (M, I_{\pm})$. Then $(X_{+}, \sigma_{+})$ and $(X_{-}, \sigma_{-})$ are holomorphically symplectically Morita equivalent in a canonical way.
\end{proposition}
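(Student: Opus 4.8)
### Proof proposal

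The plan is to construct the Morita equivalence $S$ directly as a $B$-field transform of the Weinstein groupoid of one of the Poisson structures, mimicking Example~\ref{btrans} but in the holomorphic category. Since the degenerate GK structure of symplectic type comes with the crucial identity $L_{\sigma_+} = e^F L_{\sigma_-}$, i.e.\ $\sigma_+$ is the $B$-field transform of $\sigma_-$ by the (closed, real) $2$-form $F$, the first step is to verify that $F$ interacts correctly with the complex structures. Concretely, writing $\sigma_\pm = -\tfrac14(I_\pm Q + iQ)$, the relation $L_{\sigma_+} = e^F L_{\sigma_-}$ unpacks (as the excerpt records) into equations~\eqref{star1} and~\eqref{star2}; from these one checks that $\id + F\circ\sigma_-$ is invertible as a map of holomorphic cotangent bundles and that $\sigma_+ = \sigma_-\circ(\id + F\circ\sigma_-)^{-1}$ as holomorphic Poisson tensors, not merely real ones. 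This is the place where the hypothesis that $F$ is a genuine (real, closed) $2$-form and that both $I_\pm$ are integrable gets used; the content is essentially the observation that a real $B$-field transform relating two holomorphic Poisson structures is automatically a holomorphic $B$-field transform relative to each of the two complex structures.

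Next I would invoke integrability. By the theorem of Laurent-Gengoux--Sti\'enon--Xu, $\sigma_-$ is integrable as a holomorphic Poisson manifold because its imaginary part $-\tfrac14 Q$ (or its real part) is an integrable real Poisson structure; under our standing assumption all the Poisson manifolds in sight are integrable, so the Weinstein groupoid $(\Sigma(X_-,\sigma_-),\Omega)$ exists as a holomorphic symplectic groupoid over $X_-$. Following Bursztyn--Radko in the holomorphic setting, I form $S := (\Sigma(X_-,\sigma_-),\ \Omega + t^*F)$ with the source map $s : S\to X_-$ and target map $t : S \to X_+$. The point is that pulling back $F$ along the target map and adding it to the multiplicative form $\Omega$ produces a new multiplicative holomorphic symplectic form for the same underlying groupoid, but now the target map becomes anti-Poisson for $\sigma_+ = \sigma_-^F$ rather than for $\sigma_-$; this is exactly the computation in Example~\ref{btrans}, carried out with holomorphic forms. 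One then checks the three axioms in the definition of Morita equivalence: $s$ is Poisson and $t$ anti-Poisson (the previous paragraph plus the $B$-field-transform identity), the source and target fibres are symplectically orthogonal (inherited from the groupoid structure, unchanged by adding $t^*F$ since $t^*F$ vanishes on $s$-fibres... more precisely, one uses that $\Omega$ already makes them orthogonal and that $t^*F$ is pulled back from the base of $t$), and completeness (inherited from completeness of $s,t$ for the Weinstein groupoid, since $t^*F$ does not affect the relevant Hamiltonian flows). Source-simple-connectivity is built into the Weinstein groupoid.

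The main obstacle I anticipate is not any single axiom but rather making the holomorphic bookkeeping honest: one must be careful that $\Omega + t^*F$ is a \emph{holomorphic} symplectic form, which requires $t^*F$ to be of type $(2,0)$ with respect to the complex structure on $\Sigma(X_-,\sigma_-)$, and this is precisely where the identities~\eqref{star1}--\eqref{star2} do real work --- they are what force $F$, though only assumed to be a real $2$-form on $M$, to pull back to something holomorphic on the groupoid once we use the correct complex structure (the one coming from $\sigma_+$ on the target side). Equivalently, the subtlety is that the single real space $S$ carries a holomorphic symplectic structure whose associated complex structure is genuinely different on the two sides, and reconciling this requires the symplectic-type hypothesis via equation~\eqref{metricF} and Lemma~\ref{metric11}. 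Once this is pinned down, the word ``canonical'' in the statement is justified by noting that the construction used no choices beyond the given data $(I_+,I_-,Q,F)$ and the (unique) source-simply-connected integration, and that applying the same construction with the roles of $\pm$ reversed yields the inverse Morita equivalence, as expected.
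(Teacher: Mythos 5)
Your construction is essentially the paper's: following \cite[Proposition 6.4]{bailey2016integration}, the Morita equivalence is obtained by integrating $(X_-,\sigma_-)$ to its Weinstein groupoid $(\Sigma(X_-),\Omega_-)$ and deforming to $(\Sigma(X_-),\Omega_-+t^*F)$, i.e.\ exactly the holomorphic Bursztyn--Radko gauge transform you describe, with the Morita axioms checked as you indicate. One sentence of yours should be repaired, though: a nonzero real $2$-form is never of type $(2,0)$, so the holomorphicity of $\Omega_-+t^*F$ cannot be arranged by asking $t^*F$ to be $(2,0)$ for the original complex structure on $\Sigma(X_-)$; rather, as you correctly say immediately afterwards and as the paper emphasizes, adding $t^*F$ leaves $\Im(\Omega_-)$ unchanged and \emph{redefines} the complex structure via the decomposition $\Omega=\omega I+i\omega$, and the role of \eqref{star1}--\eqref{star2} is to guarantee that the resulting endomorphism squares to $-1$ and is integrable, with $s$ remaining holomorphic to $X_-$ and $t$ becoming holomorphic and Poisson to $(X_+,\sigma_+)$.
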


We recall the construction of the canonical Morita equivalence as it will be one of our main objects of study. It can be broken up into two steps as follows:
\begin{enumerate}
\item Integrate $(X_{-}, \sigma_{-})$ to its holomorphic symplectic groupoid $(\Sigma(X_{-}), \Omega_{-})$. 
\item Deform $\Sigma(X_-)$ to the holomorphic symplectic manifold 
\begin{equation}\label{omegaf}
(Z,\Omega) = (\Sigma(X_-), \Omega_{-} + t^{*}F),
\end{equation}
 where $t$ is the target map of $\Sigma(X_-)$.
\end{enumerate}
Note that by modifying the symplectic form, we are also modifying the underlying complex structure. To see this, recall that a $(2,0)$-form can be written as $\Omega = \omega I + i \omega$, where $I$ is the complex structure and $\omega$ is  real. Therefore, specifying the form $\Omega$ and insisting that it be holomorphic symplectic is enough to specify a new complex structure.  With respect to this deformed holomorphic symplectic structure, the source map $s$ remains holomorphic and anti-Poisson, while the target map $t$ is now holomorphic and Poisson as a map from $(Z,\Omega)$ to $(X_{+}, \sigma_{+})$. We use the notation $\pi_{+} = t$ and $\pi_{-} = s$ for these maps from the modified domain $(Z,\Omega)$. Proposition~\ref{Star->ME} then states that the diagram
\[
\xymatrix{ &(Z,\Omega)\ar[dl]_{\pi_+}\ar[dr]^{\pi_-} & \\(X_{+},\sigma_{+}) & &(X_{-}, \sigma_{-}) }
\]
defines a Morita equivalence between $(X_{-},\sigma_{-})$ and $(X_{+}, \sigma_{+})$. 

This result shows that underlying any GK structure of symplectic type is a holomorphic symplectic Morita equivalence between holomorphic Poisson structures.  In order to recover the GK structure from this Morita equivalence, we need some real (i.e. non-holomorphic) data.  To see how this arises, observe that since $Z$ was obtained by deforming the holomorphic symplectic groupoid $\Sigma(X_-)$, it contains a distinguished submanifold $\cal{L}$ coming from the identity bisection.  This submanifold is neither holomorphic nor Lagrangian in $(Z,\Omega)$. It is, however, characterized by the following two properties: 

\begin{enumerate}
\item The submanifold $\cal{L}\subset Z$ is a smooth section of both $\pi_{-}$ and $\pi_{+}$; in other words, it is a smooth \emph{bisection}. This defines a diffeomorphism between the underlying smooth manifolds of $X_+$ and $X_-$, which is required because a GK structure involves two complex structures living on the same real manifold. 
\item The bisection $\cal{L}$ is Lagrangian with respect to $\omega = \Im(\Omega)$: from the point of view of generalized complex geometry, this is known as a \emph{brane} in $(Z, \Omega)$. Notice that~\eqref{omegaf} implies that $\Omega|_{\cal{L}}=F$, so that we recover the real closed 2-form $F$.
\end{enumerate}

We record the above properties satisfied by $\cal{L}$ in the following definition.

\begin{definition}
A \emph{brane bisection} in the Morita equivalence $(Z,\Omega)$ is a smooth submanifold of $Z$ which is Lagrangian for $\Im(\Omega)$ and which is a section of both $\pi_{-}$ and $\pi_{+}$. 
\end{definition}

\begin{theorem} \label{main}
A degenerate GK structure of symplectic type $(I_{+}, I_{-}, Q, F)$ is equivalent to a holomorphic symplectic Morita equivalence with brane bisection $(Z, \Omega, \cal{L})$ between the holomorphic Poisson structures $\sigma_{\pm} =  -\frac{1}{4}(I_{\pm}Q + iQ)$.
\begin{proof}
One direction of the theorem follows from Proposition~\ref{Star->ME} and the above remarks. For the converse direction let us start with a holomorphic symplectic Morita equivalence with brane bisection $(Z, \Omega = B + i\omega, \cal{L})$ and construct a degenerate GK structure. Such a Morita equivalence goes between two holomorphic Poisson manifolds $(X_{\pm}, \sigma_{\pm} = -\frac{1}{4}(I_{\pm} Q_{\pm} + iQ_{\pm}))$. Now observe that $(Z, \omega)$ defines a real smooth symplectic Morita equivalence between the real Poisson structures $(X_{+}, Q_{+})$ and $(X_{-}, Q_{-})$. The fact that $\cal{L}$ is a brane bisection means precisely that it defines a Lagrangian bisection in this Morita equivalence, and therefore that it defines a Poisson diffeomorphism between the two Poisson structures. The upshot of this is that when we use the bisection to identify $X_{-} = \cal{L} = X_{+}$, then $Q_{-} = Q_{+}$. We denote this real Poisson structure by $Q$ and the underlying smooth manifold by $M$. 

Now that we have made these identifications $(Z, \omega)$ defines a self-Morita equivalence of $(M, Q)$ and $\cal{L}$ is a Lagrangian bisection inducing the identity diffeomorphism on $M$. The symplectic groupoid $(\Sigma(X_{-}), \Omega_{-} = B_{-} + i \omega_{-})$ of $(X_{-}, \sigma_{-})$ acts principally on $(Z, \Omega)$, and so the brane $\cal{L}$ induces the following isomorphism of smooth symplectic Morita equivalences
\[
\phi : (\Sigma(X_{-}), \omega_{-}) \to (Z, \omega), \ g \mapsto \lambda(t(g)) \ast g,
\]
where $\lambda : M \to Z$ is the section of $\pi_{-}$ induced by $\cal{L}$. Using $\phi$ to compare the real parts of $\Omega_{-}$ and $\Omega$ we see that 
\begin{equation}
\phi^{\ast}( B) = B_{-} + t^{\ast} (B|_{\cal{L}}). \label{B-eq}
\end{equation}
Let $F := B|_{\cal{L}}$. Then we see that under this isomorphism 
\[
\phi^{\ast} (\Omega) = \Omega_{-} + t^{\ast}(F).
\]
On the manifold $M$, we now have the data of the two complex structures $I_{\pm}$ coming from the spaces $X_{\pm}$, the real Poisson structure $Q$, and a closed 2-form $F$. We then show that $(I_{+}, I_{-}, Q, F)$ satisfy equations \ref{star1} and \ref{star2}. For simplicity, we use $\phi$ to identity $(Z, \omega)$ with $(\Sigma(X_{-}), \omega_{-})$. On this space, we have complex structures $I$ and $I_-$ coming from $Z$ and $\Sigma(X_{-})$, respectively. Note that the complex structure $I$ satisfies
\[
t_{*} I = I_{+} t_{*},\qquad s_{*} I = I_{-} s_{*}.
\]
In order to verify equations \ref{star1} and \ref{star2}, we show that they hold for $(I, I_{-}, \omega^{-1}, t^{*}(F))$ on $\Sigma(X_{-})$ and therefore on $M$ by push-forward.  First we note that since $B = \omega I$ and $B_{-} = \omega I_{-}$, we have from equation \ref{B-eq} that $\omega (I - I_{-}) = t^{\ast} (F)$, which we can rearrange to equation \ref{star1}:
\[
I - I_{-} = \omega^{-1} t^{\ast} (F). 
\]
Applying the target projection we get 
\[
(I_{+} - I_{-}) t_{*} = t_{*} \omega^{-1} t^{*} F t_{*} = Q F t_{*},
\]
which implies equation \ref{star1} on $M$ since $t_{*}$ is surjective. Note that the $2$-form $t^{*}(F)$ gives rise to the map $t^{\ast}Ft_{\ast}:T \to T^{\ast}$. Equation \ref{star2} on the groupoid follows from a direct computation: 
\[
I^{*} t^{*} (F) + t^{*} (F) I_{-} = I^{*} \omega (I - I_{-}) + \omega (I - I_{-}) I_{-} =0,
\]
where we have used the identities $t^{*}(F) = \omega(I - I_{-})$ and $I^{*} \omega = \omega I$. But this implies the corresponding equation on $M$ since 
\[
t^{*} (I_{+}^{*} F + F I_{-} )= I^{*} t^{*}( F) + t^{*}( F) I_{-} = 0,
\]
and $t^{*}$ is injective on differential forms. This establishes equations \ref{star1} and \ref{star2} on $M$. It is clear that the two constructions outlined are inverse to each other, so we get the desired equivalence.
\end{proof}
\end{theorem}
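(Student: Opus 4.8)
The plan is to prove the equivalence by exhibiting constructions in both directions and checking they are mutually inverse. The forward direction is almost immediate from what precedes: given a degenerate GK structure $(I_{+}, I_{-}, Q, F)$, Proposition~\ref{Star->ME} produces the canonical holomorphic symplectic Morita equivalence $(Z, \Omega) = (\Sigma(X_{-}), \Omega_{-} + t^{*}F)$, and the identity bisection of $\Sigma(X_{-})$ descends to a submanifold $\cal{L} \subset Z$. One checks that $\cal{L}$ is a section of both $\pi_{\pm}$ and that $\Omega|_{\cal{L}} = F$ is real, so $\cal{L}$ is Lagrangian for $\Im(\Omega)$; that is, $\cal{L}$ is a brane bisection.

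For the converse, start with a holomorphic symplectic Morita equivalence with brane bisection $(Z, \Omega = B + i\omega, \cal{L})$ between holomorphic Poisson manifolds $(X_{\pm}, \sigma_{\pm})$, with $\sigma_{\pm} = -\frac{1}{4}(I_{\pm} Q_{\pm} + iQ_{\pm})$. First I would forget the complex structures: the imaginary part $(Z, \omega)$ is a real symplectic Morita equivalence between the real Poisson manifolds $(X_{+}, Q_{+})$ and $(X_{-}, Q_{-})$, in which $\cal{L}$ is an honest Lagrangian bisection. Since a Lagrangian bisection in a symplectic Morita equivalence induces a Poisson diffeomorphism between the two bases, using it to identify $X_{-} = \cal{L} = X_{+}$ at the level of smooth manifolds yields $Q_{+} = Q_{-} =: Q$ on a single manifold $M$, with $\cal{L}$ now a Lagrangian bisection inducing the identity.

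Next I would exploit principality of the groupoid action. The Weinstein groupoid $(\Sigma(X_{-}), \Omega_{-} = B_{-} + i\omega_{-})$ acts principally on $(Z, \Omega)$, so the bisection $\cal{L}$ trivializes this torsor, giving an isomorphism of symplectic Morita equivalences $\phi : (\Sigma(X_{-}), \omega_{-}) \to (Z, \omega)$, $g \mapsto \lambda(t(g)) \ast g$, where $\lambda : M \to Z$ is the section of $\pi_{-}$ coming from $\cal{L}$. Comparing the real parts of $\Omega$ and $\Omega_{-}$ through $\phi$ gives $\phi^{\ast}(B) = B_{-} + t^{\ast}(B|_{\cal{L}})$; setting $F := B|_{\cal{L}}$ then identifies $(Z, \Omega)$ with the $F$-field transform $(\Sigma(X_{-}), \Omega_{-} + t^{\ast}F)$, matching the construction of Proposition~\ref{Star->ME}. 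To finish, I would verify equations \ref{star1} and \ref{star2} for $(I_{+}, I_{-}, Q, F)$ on $M$: working on $\Sigma(X_{-})$, where $Z$ supplies a complex structure $I$ with $t_{*}I = I_{+}t_{*}$ and $s_{*}I = I_{-}s_{*}$, the identity $\phi^{\ast}(B) = B_{-} + t^{\ast}F$ together with $B = \omega I$ and $B_{-} = \omega I_{-}$ gives $\omega(I - I_{-}) = t^{\ast}F$, i.e. $I - I_{-} = \omega^{-1} t^{\ast}F$; pushing this forward along $t$ gives equation \ref{star1} on $M$ (using surjectivity of $t_{*}$), and a short computation using $I^{\ast}\omega = \omega I$ gives $I^{\ast}t^{\ast}F + t^{\ast}F I_{-} = 0$, whence equation \ref{star2} on $M$ since $t^{\ast}$ is injective on forms. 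Tracing through the two constructions shows they are mutually inverse.

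The step I expect to be the main obstacle is the passage through the principal torsor structure: making precise that a brane (Lagrangian) bisection trivializes the groupoid action compatibly with all the data at once, so that $\phi$ is simultaneously groupoid-equivariant, a symplectomorphism for the imaginary parts, and intertwines the source and target maps. This is what reduces an arbitrary holomorphic symplectic Morita equivalence to an explicit $F$-field transform of a Weinstein groupoid; once it is in place, verifying \ref{star1} and \ref{star2} and checking the two constructions are inverse is routine bookkeeping.
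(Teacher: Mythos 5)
Your proposal is correct and follows the paper's own proof essentially step for step: the forward direction via Proposition~\ref{Star->ME}, and the converse via forgetting to the real symplectic Morita equivalence, using the brane bisection to identify $Q_+=Q_-$, trivializing the $\Sigma(X_-)$-torsor to write $(Z,\Omega)$ as $(\Sigma(X_-),\Omega_-+t^*F)$, and then deriving equations \ref{star1} and \ref{star2} from $B=\omega I$, $B_-=\omega I_-$ by pushing forward along $t$. The step you flag as the main obstacle is exactly the one the paper also relies on, and your treatment of it matches the paper's.
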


Theorem~\ref{main} answers the main questions we raised in Section~\ref{Donkahler}, identifying the Morita equivalence $(Z,\Omega)$ as the underlying holomorphic data of a GK manifold of symplectic type, and showing how the additional data of a brane bisection $\cal{L}$ specifies the GK metric.  In Section~\ref{secpot}, we explain how $\cal{L}$ may be locally given by a real-valued potential function.


\subsection{Induced metric}
We now discuss how the metric and $B$-field of the GK structure arise from the holomorphic symplectic Morita equivalence with brane bisection $(Z, \Omega, \cal{L})$. From the equivalence established in Theorem~\ref{main}, we know that a Morita equivalence with brane bisection gives rise to a degenerate GK structure, and from this we can extract the data of a (possibly degenerate) metric and $B$-field. In this section, we detail how to determine the metric and $B$-field directly from the geometry of the Morita equivalence.

Given a real $2$-form $F$ and a complex structure $I$, the tensor $FI$ decomposes into symmetric and anti-symmetric parts: $FI = S + A$, where 
\[
S = F^{(1,1)}I,\qquad A = F^{(2,0) + (0,2)}I.
\]
Given a Morita equivalence with brane bisection $(Z, \Omega, \cal{L})$, we have two induced complex structures $I_{\pm}$ on the brane, as well as the real 2-form $F = \Omega|_\cal{L}$. Therefore the tensors $FI_{\pm}$ give rise to symmetric tensors $S_{\pm} = F^{(1,1)_{\pm}} I_{\pm}$ and anti-symmetric tensors $A_{\pm} = F^{(2,0) + (0,2)_{\pm}}I_{\pm}$.  
From the theory of GK structures we know to expect that $S_{+} = S_{-}$ and $A_{+} = - A_{-}$. Indeed, from Lemma~\ref{metric11} we know that  $g = - S_{\pm}$ and $b = \mp A_{\pm}$. Here we explain these facts directly.

\begin{proposition}
Let $(Z, \Omega, \cal{L})$ be a holomorphic symplectic Morita equivalence with brane bisection, let $I_{\pm}$ denote the complex structures induced on the brane, and let $F = \Omega|_{\cal{L}}$ denote the real $2$-form on the brane obtained by pulling back the symplectic form. If $S_{\pm} = F^{(1,1)_{\pm}} I_{\pm}$ and $A_{\pm} = F^{(2,0) + (0,2)_{\pm}}I_{\pm}$, then 
\[
S_{+} = S_{-} \ \ \text{and } \ A_{+} = - A_{-}.
\]
\begin{proof}
Let $\Omega = B + i \omega $ be the holomorphic symplectic form on $Z$, so that $F = B|_{\cal{L}}$. Writing $A_{\pm}$ and $S_{\pm}$ as the (anti-)symmetrizations of $FI_{\pm}$, we have
\[
2S_{\pm}(u,v) = B(I_{\pm}u, v) + B(I_{\pm}v,u),\qquad 2A_{\pm}(u,v) = B(I_{\pm}u, v) - B(I_{\pm}v,u),
\] 
for $u,v \in T\cal{L}$. Using the two direct sum decompositions of $TZ$ along $\cal{L}$ determined by the fibres of the projections $\pi_{\pm}$, 
\[
TZ|_{\cal{L}} = T\cal{L} \oplus K_{\pm},\qquad K_{\pm} =  \ker(d \pi_{\pm}),
\]
we can write the following expressions for $I_{\pm}$:
\[
I u = I_{\pm}u + k_{\pm}u,\qquad I_{\pm}u \in T\cal{L}, \ k_{\pm}u \in K_{\pm},
\]
for $u \in T\cal{L}$. Then using the fact that $K_{\pm}$ are $\omega$-symplectic orthogonal, $T\cal{L}$ is $\omega$-Langrangian, and $\omega I = I^{*} \omega$, we get the following identity
\begin{align*}
0 &=\omega(k_+u,k_-v)= \omega(Iu - I_{+}u, Iv - I_{-} v) 
= \omega(Iv, I_{+}u) - \omega(Iu, I_{-}v),
\end{align*}
for $u, v \in T\cal{L}$. Using the relation $B = \omega I$, this gives the following identity:
\[
B(I_{+}u, v) = B(I_{-}v, u),
\]
which implies the two identities above for $S_{\pm}$ and $A_{\pm}$.
\end{proof}
\end{proposition}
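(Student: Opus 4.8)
The plan is to reduce both equalities to the single pointwise identity $B(I_{+}u,v) = B(I_{-}v,u)$ for all $u,v \in T\mathcal{L}$, where $\Omega = B + i\omega$ is the decomposition into real and imaginary parts, and hence $F = \Omega|_{\mathcal{L}} = B|_{\mathcal{L}}$ because $\omega$ restricts to zero on the $\omega$-Lagrangian $\mathcal{L}$. Indeed, recalling that $S_{\pm}$ and $A_{\pm}$ are respectively the symmetric and antisymmetric parts of the bilinear form $(u,v) \mapsto (FI_{\pm})(u,v) = B(I_{\pm}u,v)$, we have
\[
2S_{\pm}(u,v) = B(I_{\pm}u,v) + B(I_{\pm}v,u), \qquad 2A_{\pm}(u,v) = B(I_{\pm}u,v) - B(I_{\pm}v,u).
\]
Substituting the identity (and its $u \leftrightarrow v$ version) into these formulas and using the antisymmetry of $B$ immediately gives $S_{+} = S_{-}$ and $A_{+} = -A_{-}$.

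To prove the identity I would first record the two transverse splittings of the ambient tangent bundle along the brane: since $\mathcal{L}$ is a section of each projection $\pi_{\pm}$, the differential $d\pi_{\pm}$ restricts to an isomorphism on $T\mathcal{L}$, so $T\mathcal{L} \cap K_{\pm} = 0$ and, by a dimension count,
\[
TZ|_{\mathcal{L}} = T\mathcal{L} \oplus K_{\pm}, \qquad K_{\pm} = \ker(d\pi_{\pm}).
\]
This lets me write $Iu = I_{\pm}u + k_{\pm}u$ with $I_{\pm}u \in T\mathcal{L}$ and $k_{\pm}u \in K_{\pm}$, where $I_{\pm}$ is exactly the complex structure induced on the brane. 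The crux is then the chain
\[
0 = \omega(k_{+}u, k_{-}v) = \omega(Iu - I_{+}u, Iv - I_{-}v) = \omega(Iv, I_{+}u) - \omega(Iu, I_{-}v),
\]
in which the first equality is the symplectic orthogonality of $K_{+}$ and $K_{-}$ (the second Morita axiom), and the last equality uses that $T\mathcal{L}$ is $\omega$-Lagrangian --- killing $\omega(I_{+}u, I_{-}v)$ and $\omega(u,v)$ --- together with the relation $\omega I = I^{*}\omega$, which makes $\omega(Iu, Iv)$ proportional to $\omega(u,v)$ and hence zero. Rewriting the two surviving terms via $B = \omega I$ converts the vanishing into $B(v, I_{+}u) = B(u, I_{-}v)$, and antisymmetry of $B$ yields the identity.

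The genuinely structural input --- and the step I would be most careful to justify --- is the $\omega$-orthogonality of the vertical distributions $K_{\pm}$: this is precisely where the Morita equivalence hypothesis enters (rather than merely ``$Z$ is a holomorphic symplectic manifold carrying an $\omega$-Lagrangian bisection''), and it is what couples the two a priori unrelated complex structures $I_{+}$ and $I_{-}$. Everything else is routine bookkeeping with the type-$(2,0)$ relations $B = \omega I$ and $\omega I = I^{*}\omega$ (both equivalent to $\Omega(IX,Y) = i\,\Omega(X,Y)$) and their signs, which I would establish once at the outset and then apply mechanically.
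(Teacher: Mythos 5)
Your proof is correct and follows essentially the same route as the paper's: the same two splittings $TZ|_{\mathcal{L}} = T\mathcal{L}\oplus K_{\pm}$, the same key computation $0=\omega(k_{+}u,k_{-}v)$ exploiting the symplectic orthogonality of the vertical distributions, and the same reduction to $B(I_{+}u,v)=B(I_{-}v,u)$. You in fact justify a couple of steps the paper leaves implicit (the vanishing of $\omega(Iu,Iv)$ and the existence of the splittings), so nothing further is needed.
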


The upshot of the present discussion is as follows: when we pullback the holomorphic $(2,0)$-form $\Omega$ along a brane we get a real $2$-form $F$ which is no longer $(2,0)$ with respect to either of the induced complex structures $I_{\pm}$. As such, it gives rise to symmetric and anti-symmetric tensors on the brane. The fact that $F$ was obtained by pulling back the symplectic form of a holomorphic Morita equivalence implies that we end up with a single symmetric and anti-symmetric tensor; these give rise to the metric and $B$-field of the GK structure. The positivity or non-degeneracy of the metric arises from the configuration of the brane in the Morita equivalence. For example, $g$ is non-degenerate if and only if $I(T\cal{L}) \cap K = 0 = I(T\cal{L}) \cap T\cal{L} $, where $K$ is the average of $K_{\pm}$ with respect to $T\cal{L}$. Indeed, the condition $I(T\cal{L}) \cap T\cal{L} = 0$ is equivalent to invertibility of $F$ and $I(T\cal{L}) \cap K = 0$ is equivalent to invertibility of $I_{+} + I_{-}$. Recall that these two conditions imply that we get a GK structure with non-degenerate but possibly indefinite metric.

\subsection{Examples}
We end this section by providing examples of generalized K\"ahler structures of symplectic type and their corresponding holomorphic symplectic Morita equivalences with brane bisection.

\begin{example}[Holomorphic Poisson structure]\label{integtriv}
Let $(M, I, \sigma)$ be a Poisson manifold, and let $Q = -4 \Im (\sigma)$. As in Example~\ref{trivdeg}, the associated degenerate GK structure is given by the data $(I, I, Q, 0)$. The associated Morita equivalence with brane bisection is given by the holomorphic symplectic groupoid $(\Sigma(X), \Omega)$ integrating $\sigma$, viewed as the trivial Morita self-equivalence, with brane bisection given by the identity bisection $\epsilon$.
\end{example}

\begin{example}[K\"ahler Manifold]\label{kahman}
A K\"ahler manifold $(M, I, g, \omega)$ defines a generalized K\"ahler structure of symplectic type for which $I_{\pm} = I$, $Q = 0$, and $F = \omega$. Therefore the Morita equivalence is given by the holomorphic cotangent bundle of $X = (M, I)$ twisted by $\omega$:
\[
(Z, \Omega) = (T^{\ast}X, \Omega_0 + \pi^{\ast}\omega),
\]
where $\pi: T^{*}X \to X$ is the projection and $\Omega_0$ is the canonical symplectic form. The brane bisection $\cal{L}$ is given by the zero section viewed inside of $Z$. As discussed in Section~\ref{Donkahler}, $(Z, \Omega)$ is an affine bundle modelled on the holomorphic cotangent bundle of $X$, and the brane bisection is both symplectic with respect to $\Re(\Omega)$ and Lagrangian with respect to $\Im(\Omega)$. Conversely, any such affine bundle $\cal{A} \to X$ gives a Morita equivalence and any section $\cal{L}$ of $\pi = \pi_{+} = \pi_{-}$ which is symplectic for $\Re(\Omega)$ and Lagrangian for $\Im(\Omega)$ automatically defines a non-degenerate bilinear form on $M$.  This is due to the fact that $K = \ker(d \pi)$ is a complex Lagrangian, which implies that $I(T\cal{L}) \cap \ker (d\pi) = 0$.
\end{example}

\begin{example}[Hyper-K\"ahler Manifold] \label{hyperKahler}
Let $(M, I, J, K, g)$ be a hyper-K\"ahler structure, and let 
\[
(\omega_{I}, \omega_J, \omega_K) = (gI, gJ, gK)
\]
be the corresponding triple of K\"ahler forms. This data defines the GK structure of symplectic type 
\[
(I_{+}, I_{-}, Q, F) = (I, J, \omega_{K}^{-1}, \omega_{I} + \omega_{J}).
\]
The induced holomorphic Poisson structures are non-degenerate, with corresponding holomorphic symplectic forms
\[
\Omega_{+} = \omega_{J} + i \omega_{K},\qquad \Omega_{-} = -\omega_{I} + i \omega_{K}.
\]
Let $X_{\pm} = (M, I_{\pm})$. Then, assuming that $M$ is simply-connected (or using a non source-simply connected integration), we  have a holomorphic Morita equivalence given by 
\[
(Z, \Omega) = (X_{+}, \Omega_{+}) \times (X_{-}, -\Omega_{-}),
\]
with $\pi_{+}$ and $\pi_{-}$ given by the projections onto the first and second factors respectively. The brane bisection $\cal{L}$ is then given by the diagonally embedded copy of $M$:
\[
\cal{L} = \{ (m, m) \ | \ m \in M \} \subseteq Z.
\]
\end{example}

\section{The generalized K\"ahler potential}\label{secpot}
In this section, we discuss an application of our main result to the problem of locally describing a generalized K\"ahler structure in terms of a real-valued function analogous to the K\"ahler potential. There is evidence in the physics literature for the existence of such a \emph{generalized K\"ahler potential} (see for example \cite{zabzine2009generalized, lindstrom2007generalized, hull2012generalized, lindstrom2007potential}). However, all previous constructions only work in a neighbourhood where the Poisson structure $Q$ is regular (i.e. has constant rank). As we will see, by viewing a GK structure as a brane in a holomorphic symplectic Morita equivalence, one can describe the geometry in terms of a real-valued function even in a neighbourhood where the Poisson structure changes rank. The fundamental reason for this is that the brane $\cal{L}$ is a Lagrangian submanifold of the real symplectic manifold $(Z, \Im(\Omega))$, and so may be described using a generating function.

Our generalized K\"ahler potential is obtained in the following way.  Let $(Z, \Omega, \cal{L})$ be a holomorphic symplectic Morita equivalence with brane bisection.  Choose a \emph{Darboux chart} of $(Z, \Omega)$, i.e. a local holomorphic symplectomorphism 
\[
(T^{\ast}L, \Omega_0) \to (Z, \Omega),
\]
where $L\subset Z$ is a holomorphic Lagrangian submanifold and $\Omega_0$ is the canonical holomorphic symplectic form on the holomorphic cotangent bundle $T^*L$. 
Using the identification of $T^*L$ with the bundle of $(1,0)$-forms, the brane $\cal{L}$ can be viewed as a smooth submanifold of $T^{\ast}_{1,0}L$. We require that the Darboux chart be \emph{generic} in the sense that $\cal{L}$ is the graph of a $(1,0)$-form $\eta \in \Omega^{1,0}(L)$. Then, since 
\[
d \eta = \eta^{\ast} \Omega_0 = \Omega|_{\cal{L}} = F
\]
is a real closed 2-form, we have that $d( \Im (\eta)) = 0$, which locally implies that $\Im(\eta) = -\frac{1}{2}dK$, for $K \in C^{\infty}(L,\bb{R})$ a smooth real-valued function on $L$, uniquely determined up to a real constant. And since $\eta$ is a $(1,0)$-form, its real part is determined by its imaginary part, so that $\eta = - i \partial K$ and, finally,
\[
F = i \partial \bar \partial K.
\]
 Therefore we call $K$ the \emph{generalized K\"ahler potential} for the GK structure, and we record these observations in the following theorem. 

\begin{theorem} \label{GK potential}
Let $(Z, \Omega, \cal{L})$ be a holomorphic symplectic Morita equivalence with brane bisection. Given a generic Darboux chart $(T^{\ast}L, \Omega_0) \to (Z, \Omega)$, there is a smooth real-valued function $K \in C^{\infty}(L, \bb{R})$, unique up to an additive real constant, such that $\cal{L} = Gr(- i \partial K)$. Furthermore, the real $2$-form $F = \Omega|_{\cal{L}}$ is given by $F = i \partial \bar \partial K$.  We call $K$ the generalized K\"ahler potential.
\end{theorem}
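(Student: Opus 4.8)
The plan is to set up a generic Darboux chart and simply track what it means for the brane bisection to sit inside the deformed cotangent bundle. First I would invoke the holomorphic Darboux theorem for $(Z,\Omega)$: locally there is a holomorphic Lagrangian submanifold $L\subset Z$ and a holomorphic symplectomorphism $(T^{\ast}L,\Omega_0)\to(Z,\Omega)$, so we may replace $Z$ by the holomorphic cotangent bundle $T^{\ast}L$ with its canonical form. Genericity of the chart is the requirement that the (smooth, non-holomorphic) submanifold $\cal{L}$ is transverse to the fibres of $T^{\ast}L\to L$ and meets each fibre once, so that $\cal{L}$ is the graph of a smooth section $\eta$ of the bundle of $(1,0)$-forms $T^{\ast}_{1,0}L$; this is an open condition and can always be arranged by shrinking.

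Next I would compute the pullback of $\Omega_0$ along the section $\eta\colon L\to T^{\ast}_{1,0}L$. The standard identity for the canonical form gives $\eta^{\ast}\Omega_0 = d\eta$, and since $\cal{L}$ is Lagrangian for $\Im(\Omega)$ and the chart is holomorphic, the restriction $F:=\Omega|_{\cal{L}}$ is a real closed $2$-form; transporting through the graph identification, $d\eta = F$ is real. Writing $\eta$ in real and imaginary parts, realness of $d\eta$ forces $d(\Im\eta)=0$. By the Poincar\'e lemma (locally, after possibly shrinking $L$ to a ball) there is a smooth real function $K\in C^{\infty}(L,\bb{R})$, unique up to an additive real constant, with $\Im\eta = -\tfrac12\,dK$.

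The final step is to promote this to the claimed normal form. Here I would use that $\eta$ is a $(1,0)$-form: decomposing $dK = \partial K + \bar\partial K$ into its $(1,0)$ and $(0,1)$ parts, the equation $\Im\eta = -\tfrac12\,dK$ together with the reality relation $\overline{\partial K}=\bar\partial K$ pins down both the real and imaginary parts of $\eta$, yielding $\eta = -i\,\partial K$; equivalently, $\eta$ is the unique $(1,0)$-form whose imaginary part is $-\tfrac12 dK$. Then $F = d\eta = d(-i\,\partial K) = -i\,\bar\partial\partial K = i\,\partial\bar\partial K$, which is the asserted formula, and $\cal{L} = \mathrm{Gr}(-i\,\partial K)$.

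I do not expect any serious obstacle: everything is local and the hard structural input — the existence of the holomorphic Darboux chart and the fact that $\cal{L}$ is $\Im(\Omega)$-Lagrangian — is either a classical theorem or is built into the hypothesis. The only point requiring a little care is the bookkeeping in the last step, namely that the reality of $\eta^{\ast}\Omega_0$ plus the $(1,0)$-type of $\eta$ genuinely determines $\eta$ from its imaginary part; this is a pointwise linear-algebra fact about the decomposition $T^{\ast}_{\bb{C}}L = T^{\ast}_{1,0}L\oplus T^{\ast}_{0,1}L$ and causes no trouble. One should also remark that "generic Darboux chart" is used twice — once to get that $L$ is Lagrangian and holomorphic, once to get that $\cal{L}$ is a graph — and both are genuinely needed.
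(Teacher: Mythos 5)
Your proposal is correct and follows essentially the same route as the paper: Darboux chart, $\cal{L}$ as the graph of a $(1,0)$-form $\eta$ with $\eta^{\ast}\Omega_0 = d\eta = F$ real, hence $d(\Im\eta)=0$, a local primitive $\Im\eta = -\tfrac12 dK$, and the $(1,0)$-type of $\eta$ forcing $\eta = -i\partial K$ and $F = i\partial\bar\partial K$. Your added remarks on why a $(1,0)$-form is determined by its real imaginary part, and on the two roles of genericity, are consistent with (and slightly more explicit than) the paper's own argument.
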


\subsection{Examples}

In the first example, we see that the generalized K\"ahler potential does indeed specialize to the classical K\"ahler potential. 

\begin{example}[K\"ahler potential] \label{Kahlerex} Suppose the Poisson structure vanishes: $Q = 0$. Then a GK structure of symplectic type is automatically K\"ahler, and as we saw in Example~\ref{kahman}, the Morita equivalence is given by a twist of the holomorphic cotangent bundle by the K\"ahler form $\omega$:
\[
(Z, \Omega) = (T^{\ast}X, \Omega_0 + \pi^{\ast}\omega),
\]
and the brane bisection $\cal{L}$ is given by the zero section of $T^{\ast}X$ viewed as a brane in $Z$. Now choose a local holomorphic Lagrangian section $\lambda$ of $\pi: Z \to X$ with domain $U\subset X$ and image $L$. The projection $\pi$ defines a holomorphic isomorphism from $L$ to $U$, and so we can use the natural action of $T^{\ast}X$ on $Z$ to define the following holomorphic Darboux chart:
\[
(T^{\ast}U, \Omega_0) \to (Z, \Omega), \ \alpha_{x} \mapsto \alpha_{x} + \lambda(x),
\]
where $\alpha_{x} \in T^{\ast}_{x}U$. Theorem \ref{GK potential} then tells us that $\cal{L} = Gr(-i \partial K)$ and $\omega = i \partial \bar \partial K$, for a real-valued function $K \in C^{\infty}(U, \bb{R})$, recovering the usual notion of K\"ahler potential.
\end{example}

In the case where the underlying Poisson structure $Q$ is regular, it is possible to choose the Darboux chart in such a way as to recover the GK potentials occurring in the physics literature:

\begin{example}[Non-degenerate Poisson structure] If the Poisson structure $Q = \omega^{-1}$ is non-degenerate, then both of the holomorphic Poisson structures $\sigma_\pm$ are as well, with corresponding holomorphic symplectic forms 
\[
\Omega_{\pm} = \omega I_{\pm} + i \omega.
\]
The Morita equivalence between the resulting holomorphic symplectic manifolds is then  
\[
(Z, \Omega) = (X_{+}, \Omega_{+}) \times (X_{-}, -\Omega_{-}),
\]
with $\pi_{+}$ and $\pi_{-}$ the projections onto the first and second factors, respectively. The brane bisection $\cal{L}$ is given by the diagonally embedded manifold. We now construct a holomorphic Darboux chart on $Z$ from a pair of holomorphic Darboux charts for $(X_{\pm}, \Omega_{\pm})$: let $(q^{\alpha},p_{\alpha})_{\alpha = 1}^{n}$ define a holomorphic Darboux chart on $X_{+}$ so that $\Omega_{+} = dp_{\alpha} \wedge dq^{\alpha}$, and let $(Q^{\alpha},P_{\alpha})_{\alpha = 1}^{n}$ define a holomorphic Darboux chart on $X_{-}$ so that $\Omega_{-} = dP_{\alpha} \wedge dQ^{\alpha}$. Then $(q^{\alpha},p_{\alpha},Q^{\alpha},P_{\alpha})_{\alpha = 1}^{n}$ defines a holomorphic chart on $Z$ with respect to which the form $\Omega$ has the following expression:
\[
\Omega = dp_{\alpha} \wedge dq^{\alpha} + dQ^{\alpha} \wedge dP_{\alpha}.
\]
A natural choice for the complex Lagrangian in $(Z, \Omega)$ is given by $L = \{ Q^{\alpha} = p_{\alpha} = 0 \}$, so that $ (q^{\alpha},p_{\alpha},Q^{\alpha}, P_{\alpha})_{\alpha = 1}^{n} $ define a holomorphic Darboux chart with $q^{\alpha}$ and $P_{\alpha}$ coordinates along $L$ and $p_{\alpha}$ and $Q^{\alpha}$ the fibre coordinates. Theorem \ref{GK potential} then tells us that in the generic situation, the brane is given in these coordinates by the graph of $-i \partial K$ for a real-valued function $K(q^{\alpha},P_{\alpha}, \bar{q}^{\alpha}, \bar{P}_{\alpha})$:
\[
\cal{L} = \{ (q^{\alpha},p_{\alpha},Q^{\alpha},P_{\alpha})_{\alpha = 1}^{n} \ |\  p_\alpha=- i \frac{\partial K}{\partial q^{\alpha}},\ Q^\alpha=  - i \frac{\partial K}{\partial P_{\alpha}}
\},
\]
and the symplectic form is given by $F = i \partial \bar \partial K$.
Using $q^\alpha$ and $P_\alpha$ as coordinates on this brane, we can then express all of the remaining data in terms of the GK potential $K$ as follows. First, the two projection maps $\pi_{\pm}$ have the following coordinate expressions
\[
\pi_{+}|_{\cal{L}}(q^{\alpha},P_{\alpha}) = (q^{\alpha}, -i \frac{\partial K}{\partial q^{\alpha}}), \qquad \pi_{-}|_{\cal{L}}(q^{\alpha}, P_{\alpha}) = (-i \frac{\partial K}{\partial P_{\alpha}}, P_{\alpha}), 
\]
and therefore the holomorphic symplectic forms $\Omega_{\pm}$ can be expressed as follows
\begin{align*}
\Omega_{+} &= i \Big( \frac{\partial^{2} K}{\partial \bar{q}^{\beta} \partial q^{\alpha}}  dq^{\alpha} \wedge d\bar{q}^{\beta}+ \frac{\partial^{2} K}{\partial P_{\beta} \partial q^{\alpha}} dq^{\alpha} \wedge dP_{\beta} + \frac{\partial^{2} K}{\partial \bar{P}_{\beta} \partial q^{\alpha}} dq^{\alpha} \wedge d\bar{P}_{\beta} \Big) \\
\Omega_{-} &= i \Big( \frac{\partial^{2} K}{\partial \bar{P}_{\beta} \partial P_{\alpha}} d\bar{P}_{\beta} \wedge dP_{\alpha} + \frac{\partial^{2} K}{\partial q^{\beta} \partial P_{\alpha}} dq^{\beta} \wedge dP_{\alpha}+ \frac{\partial^{2} K}{\partial \bar{q}^{\beta} \partial P_{\alpha}} d\bar{q}^{\beta} \wedge dP_{\alpha} \Big).
\end{align*}
Since these forms are holomorphic symplectic, they determine the complex structures $I_{\pm}$.  The symplectic form $\omega$ is the common imaginary part of $\Omega_{\pm}$:
\[
\omega = \frac{1}{2}\Big(\frac{\partial^{2} K}{\partial P_{\beta} \partial q^{\alpha}} dq^{\alpha} \wedge dP_{\beta} + \frac{\partial^{2} K}{\partial \bar{P}_{\beta} \partial q^{\alpha}} dq^{\alpha} \wedge d\bar{P}_{\beta} + c.c. \Big).
\]
In this way, we recover the expressions found in~\cite[Section 4.2]{zabzine2009generalized}.
\end{example}

We now use our notion of GK potential to construct new examples of 4-dimensional GK structures for which the Hitchin Poisson structure is \emph{not of constant rank}, dropping from full rank to zero along a codimension 2 submanifold.
\begin{example}
Recall from Example~\ref{affpois} that the Poisson structure $\Pi = x \partial_{x} \wedge \partial_{y}$ on $\bb{C}^{2}$ has symplectic groupoid $\cal{G} = \bb{C}^{4}$, with source and target maps
\[
t(a,b,x,y) = (e^{a}x,y + xb), \qquad s(a,b,x,y) = (x, y),
\]
and symplectic form $\Omega = da \wedge d(y + xb) - db \wedge dx$. Viewing this as the trivial Morita equivalence we can upgrade this to an example of a GK structure by the appropriate choice of a brane bisection. In order to apply Theorem \ref{GK potential} we choose a Darboux chart for the groupoid as follows:
\[
(p_{1}, p_{2}, q_{1}, q_{2}) = (a, - b, y + xb, x).
\]
This puts the symplectic form into the standard form 
\[
\Omega = dp_{1} \wedge dq_{1} + dp_{2} \wedge dq_{2}.
\]
We take $q_{1}, q_{2}$ to be coordinates on the Lagrangian and $p_{1}, p_{2}$ the cotangent fibre coordinates. The source and target maps then have the form
\[
t(p_{1}, p_{2}, q_{1}, q_{2}) = (e^{p_{1}}q_{2}, q_{1}),  
\qquad
s(p_{1}, p_{2}, q_{1}, q_{2}) = (q_{2}, q_{1} + p_{2}q_{2}).
\]
According to Theorem~\ref{GK potential}, to define a brane it suffices to choose a real-valued function of $q_{1}$ and $q_{2}$. We take 
\[
K(q_{1}, q_{2}) = q_{1} \bar{q}_{1} + q_{2} \bar{q}_{2}.
\]
Then the brane is given by the graph of the $(1,0)$-form $-i \partial K = -i(\bar{q}_{1}dq_{1} + \bar{q}_{2}dq_{2})$. That is, the brane is given by 
\[
\cal{L} = \{(p_1,p_2,q_1,q_2)\ |\  p_1=-i \bar{q}_{1},\ \  p_2= -i \bar{q}_{2} \}.
\]
Using $q_{1}$ and $q_{2}$ as coordinates on $\cal{L}$, the source and target maps (restricted to $\cal{L}$) are 
\[
t(q_{1}, q_{2}) = (e^{-i \bar{q}_{1}}q_{2}, q_{1}), \qquad s(q_{1}, q_{2}) = (q_{2}, q_{1} - i |q_{2}|^{2}).
\]
Because these are diffeomorphisms, $\cal{L}$ is a bisection of $s$ and $t$. Hence the triple $(\cal{G}, \Omega, \cal{L})$ defines a degenerate GK structure of symplectic type. By Theorem~\ref{GK potential}, the 2-form is given by
\[
F = i \partial \bar{\partial} K = i (dq_{1} \wedge d \bar{q}_{1} + dq_{2} \wedge d \bar{q}_{2}).
\]
In these coordinates, the Hitchin Poisson structure is given by 
\[
Q = \frac{1}{2i}(q_{2} \partial_{q_{2}} \wedge \partial_{q_{1}} - \bar{q}_{2} \partial_{\bar{q}_{2}} \wedge \partial_{\bar{q}_{1}}),
\]
and the complex structures $I_{\pm}$ can be specified by their holomorphic coordinate functions: the $I_{+}$-holomorphic functions are given by $t^{\ast}x = e^{-i \bar{q}_{1}}q_{2}$ and $t^{\ast}y = q_{1}$, while the $I_{-}$-holomorphic functions are given by $s^{\ast}x = q_{2}$ and $s^{\ast}y = q_{1} - i |q_{2}|^{2}$.

In order to see that this defines a  GK structure, we must determine whether, and where, the induced metric $g$ is positive definite. First we determine where $g$ is invertible: by Lemma \ref{metric11} this happens precisely where $\omega_{-} = g I_{-} = F^{(1,1)_{-}}$ is invertible, where $F^{(1,1)_{-}}$ is the $(1,1)$-component of $F$ with respect to the $I_{-}$ complex structure. We have 
\[
F^{(1,1)_{-}} = i dq_{1} \wedge d\bar{q}_{1} + i(1 - 2|q_{2}|^{2})dq_{2}\wedge d\bar{q}_{2} + \bar{q}_{2}dq_{2} \wedge dq_{1} + q_{2} d\bar{q}_{2} \wedge d \bar{q}_{1},
\]
so that 
\[
F^{(1,1)_{-}} \wedge F^{(1,1)_{-}} = -2(1 - |q_{2}|^{2}) dq_{1} \wedge d\bar{q}_{1} \wedge dq_{2} \wedge d\bar{q}_{2}.
\]
Hence $g$ is invertible whenever $|q_{2}| \neq 1$. Along $q_2=0$, we have $s^{\ast}(dx \wedge dy) = dq_{2} \wedge dq_{1}$, so that $I_{-}$ coincides with the standard complex structure in coordinates $(q_1,q_2)$, and $F^{(1,1)_{-}} = i dq_{1} \wedge d\bar{q}_{1} + idq_{2}\wedge d\bar{q}_{2}$. Since these formulas coincide with the standard K\"ahler structure on $\bb{C}^2$, we know that $g$ is positive definite when $q_{2} = 0$ and therefore in the region $|q_{2}| < 1$. Therefore this defines a generalized K\"ahler structure on $\bb{C} \times D$, where $D = \{ q_{2} \ | \ |q_{2}| < 1 \}$. The metric has the explicit form 
\[
g = 2(dq_{1} d\bar{q}_{1} + dq_{2} d\bar{q}_{2} + i \bar{q}_{2} dq_{1} dq_{2} - i q_{2} d\bar{q}_{1} d \bar{q}_{2}).
\]
Note that if we restrict to the disc $\{q_{1} = c\} \times D$ the metric pulls back to $2dq_{2} d\bar{q}_{2}$, and so this disc has finite volume. Therefore the metric is not complete by the Hopf-Rinow theorem.
\end{example}

\begin{example}\label{gkansatz}
We now generalize the previous example by choosing a different generalized K\"ahler potential $K$, of the following form:
\[
K(q_{1}, q_{2}) = a(q_{1},\bar{q}_{1}) + b(q_{2}, \bar{q}_{2}),
\]
for real-valued functions $a$ and $b$. The brane is then given by 
\[
\cal{L} = \{(p_1,p_2,q_1,q_2)\ |\  p_1=-i \partial_{q_{1}}a, \ \ p_2 = -i \partial_{q_{2}}b \}.
\]
Again, $t$ and $s$ restrict to $\cal{L}$ to be diffeomorphisms, and hence $\cal{L}$ defines a brane bisection, defining a  degenerate GK structure. In terms of the real-valued functions
\[
\alpha(q_{1}, \bar{q}_{1}) = \frac{\partial^2 a}{\partial{q_{1}} \partial{\bar{q}_{1}}}, \qquad \beta(q_{2}, \bar{q}_{2}) = \frac{\partial^2 b}{\partial{q_{2}} \partial{\bar{q}_{2}}},
\]
we have 
\[
F = i (\alpha dq_{1} \wedge d \bar{q}_{1} + \beta dq_{2} \wedge d\bar{q}_{2}),
\]
which has $(1,1)$ component with respect to $I_{-}$ given by 
\[
F^{(1,1)_{-}} = i \alpha dq_{1} \wedge d\bar{q}_{1} + i \beta(1 - 2 \alpha \beta |q_{2}|^{2})dq_{2}\wedge d\bar{q}_{2} + \alpha \beta \bar{q}_{2}dq_{2} \wedge dq_{1} + \alpha \beta q_{2} d\bar{q}_{2} \wedge d \bar{q}_{1},
\]
so that 
\[
F^{(1,1)_{-}} \wedge F^{(1,1)_{-}} = -2 \alpha \beta (1 - \alpha \beta |q_{2}|^{2}) dq_{1} \wedge d\bar{q}_{1} \wedge dq_{2} \wedge d\bar{q}_{2}.
\]
Furthermore, the induced metric is given by 
\[
g = 2(\alpha dq_{1} d\bar{q}_{1} + \beta dq_{2} d\bar{q}_{2} + i \alpha \beta \bar{q}_{2} dq_{1} dq_{2} - i \alpha \beta q_{2} d\bar{q}_{1} d \bar{q}_{2}).
\]
We want the metric to be positive definite. Setting $q_{2} = 0$ shows that we must have $\alpha, \beta > 0$. The expression for $F^{(1,1)_{-}} \wedge F^{(1,1)_{-}}$ shows that $g$ will be invertible precisely when 
\[
1 \neq \alpha \beta |q_{2}|^{2}.
\]
Setting $q_{2} = 0$ shows that we must therefore require that $\alpha \beta |q_{2}|^{2} < 1$. Finally, because $\alpha$ and $\beta$ depend only on $q_{1},\bar{q}_{1}$ and $q_{2},\bar{q}_{2}$ respectively, $\alpha$ must be bounded by a constant and $\beta$ must be bounded by $\frac{C}{|q_{2}|^{2}}$, for $C$ a positive constant. Under these assumptions, we obtain examples of generalized K\"ahler structures of symplectic type.
\end{example}

\begin{proposition}\label{globex}
In the context of the previous example, choose the generalized K\"ahler potential to be 
\[
K(q_{1}, q_{2}) = \frac{|q_{1}|^{2}}{C} - Li_{2}(-|q_{2}|^{2}),
\]
for $C > 1$ a constant, where $Li_{2}(z) = - \int_{0}^{z} \log(1-u)\frac{du}{u}$ is the dilogarithm. Then we get $\alpha = \frac{1}{C}$ and $\beta = \frac{1}{1 + |q_{2}|^2}$, and hence the metric is given by 
\[
g = 2(\frac{1}{C} dq_{1} d\bar{q}_{1} + \frac{1}{1+|q_{2}|^2}(dq_{2} d\bar{q}_{2} + \frac{i}{C} \bar{q}_{2} dq_{1} dq_{2} - \frac{i}{C} q_{2} d\bar{q}_{1} d \bar{q}_{2})).
\]
This gives a generalized K\"ahler structure on $\bb{R}^4$ for which the metric is complete.
\begin{proof}
After the argument of Example~\ref{gkansatz}, we need only show the completeness of the metric.  Note that the metric is translation invariant in the $q_{1}$ direction. Quotienting by a $\bb{Z}^2$ lattice, we obtain a metric on $S^1 \times S^1 \times \bb{C}$. Completeness of this metric is equivalent to completeness of the original.  To show completeness, we need only investigate geodesics escaping to infinity in the $q_{2}$-direction. In coordinates $q_{1} = x + iy$ and $q_{2} = \sinh(s)e^{i \theta}$, the metric has the following form:
\[
g = \frac{2}{C}(dx^2 + dy^2) + 2(ds^2 + \tanh^2(s)d\theta^2 - \frac{2}{C}\tanh^2(s) dx d\theta - \frac{2}{C}\tanh(s)dyds).
\]
For any geodesic $\gamma(t) = (x(t), y(t), s(t), \theta(t))$, we have 
\[
\sqrt{g(\dot{\gamma}, \dot{\gamma})} \geq \sqrt{\frac{2 (C-1)}{C}} \dot{s},
\]
from which it follows that the length of the curve over the interval $[t_{0}, t_{1}]$ is bounded below in the following way:
\[
L(\gamma) \geq \sqrt{\frac{2 (C-1)}{C}} (s(t_{1}) - s(t_{0})).
\]
Therefore the length of a curve that escapes to infinity is unbounded and so the metric is complete.

\end{proof}
\end{proposition}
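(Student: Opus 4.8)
The plan is to check that the chosen potential $K$ satisfies all the hypotheses of Example~\ref{gkansatz}, so that it automatically defines a generalized K\"ahler structure of symplectic type on all of $\bb{C}^{2}=\bb{R}^{4}$ with metric of the stated form, and then to establish completeness by a direct geodesic-length estimate. For the first part, I would simply differentiate $K$. Writing $a(q_{1},\bar q_{1})=|q_{1}|^{2}/C$ gives $\alpha=\partial_{q_{1}}\partial_{\bar q_{1}}a=1/C$ at once. For $b(q_{2},\bar q_{2})=-\mathrm{Li}_{2}(-|q_{2}|^{2})$, I would use $\tfrac{d}{dz}\mathrm{Li}_{2}(z)=-\log(1-z)/z$ to get $\partial_{\bar q_{2}}b=\log(1+|q_{2}|^{2})/\bar q_{2}$ and hence $\beta=\partial_{q_{2}}\partial_{\bar q_{2}}b=1/(1+|q_{2}|^{2})$. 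Since $C>1$ we then have $\alpha\equiv 1/C>0$, $\beta>0$, and $\alpha\beta|q_{2}|^{2}=\tfrac{|q_{2}|^{2}}{C(1+|q_{2}|^{2})}<\tfrac{1}{C}<1$ on all of $\bb{C}^{2}$; these are exactly the positivity and non-degeneracy conditions isolated in Example~\ref{gkansatz}, and the boundedness requirements there ($\alpha$ bounded, $\beta\le C/|q_{2}|^{2}$) hold trivially, so Example~\ref{gkansatz} yields a genuine generalized K\"ahler structure on $\bb{R}^{4}$. Substituting $\alpha=1/C$ and $\beta=1/(1+|q_{2}|^{2})$ into its metric formula produces the displayed $g$.

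It then remains to prove completeness, for which I would follow the route indicated in the statement. Since $g$ is invariant under translations in $q_{1}$, I would quotient the $q_{1}$-plane by a $\bb{Z}^{2}$ lattice to obtain a metric on $S^{1}\times S^{1}\times\bb{C}$; the quotient map is a Riemannian covering, so completeness of one metric is equivalent to completeness of the other, and it suffices to treat the compactified version. In coordinates $q_{1}=x+iy$ and $q_{2}=\sinh(s)e^{i\theta}$ --- so that $1+|q_{2}|^{2}=\cosh^{2}(s)$ and $\beta=1/\cosh^{2}(s)$ --- a direct substitution rewrites the metric as
\[
g=\tfrac{2}{C}(dx^{2}+dy^{2})+2\big(ds^{2}+\tanh^{2}(s)\,d\theta^{2}-\tfrac{2}{C}\tanh^{2}(s)\,dx\,d\theta-\tfrac{2}{C}\tanh(s)\,dy\,ds\big).
\]
The crux is the pointwise inequality $g(v,v)\ge\tfrac{2(C-1)}{C}(ds(v))^{2}$ for every tangent vector $v=(\dot x,\dot y,\dot s,\dot\theta)$. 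To prove it I would split off $\tfrac{2}{C}(\dot y-\tanh(s)\,\dot s)^{2}+\tfrac{2}{C\cosh^{2}(s)}\dot s^{2}\ge 0$ from the terms involving $\dot y$ and $\dot s$, and then observe that the residual form $\tfrac{2}{C}\dot x^{2}-\tfrac{4}{C}\tanh^{2}(s)\,\dot x\dot\theta+2\tanh^{2}(s)\,\dot\theta^{2}$ in $(\dot x,\dot\theta)$ is positive semidefinite, since its discriminant $\tfrac{4\tanh^{2}(s)}{C}\big(1-\tfrac{\tanh^{2}(s)}{C}\big)$ is nonnegative precisely because $\tanh^{2}(s)<1<C$. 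Integrating gives $L(\gamma)\ge\sqrt{\tfrac{2(C-1)}{C}}\,|s(t_{1})-s(t_{0})|$ for every curve $\gamma$ on $[t_{0},t_{1}]$.

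Finally, on $S^{1}\times S^{1}\times\bb{C}$ the $q_{1}$-directions are compact, so any curve escaping to infinity must have $|q_{2}|\to\infty$, hence $s\to\infty$; by the length bound such a curve has infinite length, so Hopf--Rinow gives completeness of the quotient metric and therefore of the original metric on $\bb{R}^{4}$. The only point requiring care is making the lower bound on $g(v,v)$ genuinely uniform in $s$ --- i.e. the positive-semidefiniteness of the residual $(\dot x,\dot\theta)$-form --- and this is exactly where the hypothesis $C>1$ is used; everything else is bookkeeping layered on top of Example~\ref{gkansatz}.
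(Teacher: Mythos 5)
Your proposal is correct and follows essentially the same route as the paper: the same $\bb{Z}^2$-quotient, the same coordinates $q_1=x+iy$, $q_2=\sinh(s)e^{i\theta}$, the same lower bound $g(\dot\gamma,\dot\gamma)\ge \frac{2(C-1)}{C}\dot s^2$, and the same Hopf--Rinow conclusion. The only difference is that you supply details the paper leaves implicit --- the dilogarithm computation of $\beta$ and the completing-the-square verification of the key pointwise inequality --- and both of these check out.
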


\section{The Picard group}\label{picsec}
In this section we focus on generalized K\"ahler structures where the two holomorphic Poisson structures are isomorphic. In this case, by choosing an isomorphism, we can assume that the two holomorphic Poisson structures coincide and then study the self-Morita equivalences of the given Poisson structure. This leads to the notion of the \emph{Picard group} of a Poisson structure $(X, \sigma)$, first introduced by Weinstein and Bursztyn in the smooth category in \cite{bursztyn2004picard}. Since there is a composition for Morita equivalences, (integrable) holomorphic Poisson manifolds can be viewed as the objects of a groupoid where the morphisms are given by holomorphic symplectic Morita equivalences.
\begin{definition}
The \emph{holomorphic Picard groupoid} $\PG$ is the category whose objects are integrable holomorphic Poisson manifolds and whose morphisms are isomorphism classes of holomorphic symplectic Morita equivalences. The \emph{Picard group} of a holomorphic Poisson manifold $(X, \sigma)$ is the automorphism group of $(X, \sigma)$ in $\PG$: 
\[
\Pic(X, \sigma) = \Hom[\PG]{ (X, \sigma), (X, \sigma) }.
\]
\end{definition}
Since Morita equivalences equipped with bisections also compose, it is possible to upgrade the above groupoid so that the morphisms are holomorphic symplectic Morita equivalences with brane bisection.
\begin{definition} 
The \emph{Picard groupoid with branes} $\BPG$  
is the category whose objects are integrable holomorphic Poisson manifolds and whose morphisms are isomorphism classes of holomorphic symplectic Morita equivalences equipped with brane bisections. The automorphism group of the object $(X, \sigma)$ is denoted by 
\[
\BPic(X, \sigma) = \Hom[\BPG]{ (X, \sigma), (X, \sigma) }.
\]
\end{definition}
\begin{remark}
Morphisms between two fixed Poisson manifolds $\Hom[\BPG]{ (X_{+}, \sigma_{+}), (X_{-}, \sigma_{-})}$ correspond, by Theorem~\ref{main}, to degenerate GK structures of symplectic type.  
\end{remark}

There is a natural forgetful functor from $\BPG$ to $\PG$ which drops the data of the brane bisection. This gives rise to a homomorphism 
\[
\BPic(X, \sigma) \to \Pic(X, \sigma),\qquad [(Z, \Omega, \lambda)] \mapsto [(Z, \Omega)].
\]
The kernel of this map consists of objects where the underlying Morita equivalence is trivial, i.e. isomorphic to the Weinstein groupoid $(\Sigma(X), \Omega)$. Therefore, the kernel is given by the image of the following natural homomorphism 
\[
\mathrm{Bis}^\cal{L}(\Sigma(X)) \to \BPic(X, \sigma), \qquad \lambda \mapsto [(\Sigma(X), \Omega, \lambda)],
\]
where $\mathrm{Bis}^\cal{L}(\Sigma(X))$ is the group of brane bisections in $(\Sigma(X), \Omega)$. Note that we are viewing a brane bisection as a map $\lambda: X \to Z$ such that $\pi_{-} \circ \lambda = id_{X}$, the map $\phi_{\lambda} := \pi_{+} \circ \lambda$ is a diffeomorphism, and $\lambda^{\ast} \Im(\Omega) = 0$.

Using Theorem~\ref{main}, we are able to give a concrete description of $\BPic(X, \sigma)$, as follows. An object $[(Z, \Omega, \lambda)]$ of this group is a Morita self-equivalence with brane bisection of the holomorphic Poisson structure $(X, \sigma)$, where we let $M$ denote the underlying smooth manifold of $X$, $I$ its  complex structure, and $Q = -4 \Im(\sigma)$.  By Theorem~\ref{main}, this self-equivalence with brane corresponds to a degenerate GK structure of symplectic type, i.e. a solution $(I_{+}, I_{-}, Q, F)$ of equations \ref{star1} and \ref{star2}.  Since we are viewing the brane as a section $\lambda$ of $\pi_{-}$ in this correspondence, we obtain 
$I_{-} = I$, $F = \lambda^{\ast} \Omega$, and $I_{+} = (\phi_{\lambda}^{-1})_{\ast}(I)$, where $\phi_{\lambda} = \pi_{+} \circ \lambda$. Therefore, given the holomorphic Poisson structure, the remaining data is encoded by the real 2-form $F$ and the diffeomorphism $\phi_{\lambda}$. Using equation \ref{star1} to express $I_{+}$ as $I^{F} := I + QF$, equation \ref{star2} then becomes 
\[
FI + I^{*}F + FQF = 0,
\]
and the relation between $\phi_{\lambda}$, $I_{+}$ and $I_{-}$ can then be expressed as 
\[
(\phi_{\lambda})_{\ast}(I^{F}) = I.
\]
%
%
%
Based on these observations, we define the following subgroup of $\mathrm{Diff}_Q(M) \ltimes \Omega^{2,\mathrm{cl}}(M)$, the semi-direct product of the group of diffeomorphisms preserving $Q$ with the group of closed 2-forms: 
\[
\mathrm{Aut}_\cal{C}({I, \sigma}) = \{ (\phi, F) \in \mathrm{Diff}_Q(M) \times \Omega^{2,\mathrm{cl}}(M) \ | \ FI + I^{\ast} F + FQF = 0\text{ and } \phi_{\ast}(I^{F}) = I \},
\]
with multiplication defined as follows:
\begin{equation}  \label{groupproduct}
(\phi_{1}, F_{1}) \ast (\phi_{2}, F_{2}) = (\phi_{1} \circ \phi_{2}, \phi_{2}^{\ast}F_{1} + F_{2}).
\end{equation}
This is the group of \emph{Courant automorphisms of $(I, \sigma)$} when the holomorphic Poisson structure is viewed as a generalized complex structure (See~\cite{gualtieri2010branes} for a study of this group). An upshot of the present discussion is the existence of a map between $\BPic(X, \sigma)$ and $\mathrm{Aut}_\cal{C}({I, \sigma})$, and a consequence of Theorem~\ref{main} is the fact that these two groups are isomorphic.
\begin{corollary}
There is an isomorphism of groups 
\[
\chi : \BPic(X, \sigma) \to \mathrm{Aut}_\cal{C}({I, \sigma}), \qquad [Z, {\Omega},\lambda] \mapsto (\pi_{+} \circ \lambda, \lambda^{\ast} {\Omega} ).
\]
\begin{proof} The above discussion shows that the map $\chi$ is well-defined. Hence it remains to show that $\chi$ is a homomorphism and a bijection. 

Step 1: $\chi$ is a group homormophism. Let $(Z_{1}, \Omega_{1}, \lambda_{1})$ and $(Z_{2}, \Omega_{2}, \lambda_{2})$ be Morita equivalences with brane bisections. The product of the bimodules is given by the quotient 
\[
Z_{1} \ast Z_{2} = (Z_{1} \times_{X} Z_{2}) / \Sigma(X),
\]
with symplectic form $\Omega$ defined via symplectic reduction, and the product of the bisections is given by 
\[
\lambda_{1} \ast \lambda_{2}(x) = [\lambda_{1} \circ \pi_{+,2} \circ \lambda_{2}(x), \lambda_{2}(x) ].
\]
Therefore $\pi_{+} \circ (\lambda_{1} \ast \lambda_{2}) = (\pi_{+,1} \circ \lambda_{1}) \circ (\pi_{+,2} \circ \lambda_{2})$, and 
\begin{align*}
(\lambda_{1} \ast \lambda_{2})^{\ast} \Omega &= (\lambda_{1} \circ \pi_{+,2} \circ \lambda_{2})^{\ast} \Omega_{1} + \lambda_{2}^{\ast} \Omega_{2} \\
 &= (\pi_{+,2} \circ \lambda_{2})^{\ast} (\lambda_{1}^{\ast} \Omega_{1}) + \lambda_{2}^{\ast} \Omega_{2},
\end{align*}
which, according to equation \ref{groupproduct}, is the product of $(\pi_{+, 1} \circ \lambda_{1}, \lambda_{1}^{\ast} {\Omega_{1}} )$ and $(\pi_{+, 2} \circ \lambda_{2}, \lambda_{2}^{\ast} {\Omega_{2}} )$.

Step 2: $\chi$ is an isomorphism. We show this by defining an explicit inverse $\zeta : \mathrm{Aut}_\cal{C}({I, \sigma}) \to \BPic(X,\sigma)$. Given $(\phi, F) \in \mathrm{Aut}_\cal{C}({I, \sigma})$, Proposition~\ref{Star->ME} implies that we get the following element of the Picard 
\[
\xymatrix{
&(\Sigma(X),\Omega + t^{\ast}F)\ar[dl]_-{\phi \circ t}\ar[dr]^-{s} & \\
(X,\sigma) & &(X,\sigma) 
}
\]
where the brane bisection is given by the identity bisection $\epsilon$. We therefore take this to define $\zeta(\phi, F)$. Theorem \ref{main} then implies that $\zeta$ and $\chi$ are inverse to each other. 
\end{proof}
\end{corollary}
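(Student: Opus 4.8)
The plan is to establish in turn that $\chi$ is well-defined, that it is a group homomorphism, and that it admits a two-sided inverse; throughout, the heavy lifting is done by Theorem~\ref{main} and the construction of the canonical Morita equivalence. First I would check that $\chi$ takes values in $\mathrm{Aut}_\cal{C}(I,\sigma)$ and is insensitive to the choice of representative. Given a Morita self-equivalence with brane bisection $(Z,\Omega,\lambda)$ of $(X,\sigma)$, Theorem~\ref{main} attaches to it a degenerate GK structure of symplectic type $(I_{+},I_{-},Q,F)$ on the underlying manifold $M$; because $\lambda$ is taken as a section of $\pi_{-}$, one has $I_{-}=I$, $F=\lambda^{\ast}\Omega$, $I_{+}=I^{F}=I+QF$, and $\phi_{\lambda}:=\pi_{+}\circ\lambda$ is a diffeomorphism with $(\phi_{\lambda})_{\ast}(I^{F})=I$. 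Equations \ref{star1} and \ref{star2} then say precisely that $\phi_{\lambda}\in\mathrm{Diff}_{Q}(M)$ and that $FI+I^{\ast}F+FQF=0$, so $(\phi_{\lambda},F)\in\mathrm{Aut}_\cal{C}(I,\sigma)$. Independence of the representative is formal: an isomorphism $\Psi$ of Morita equivalences carrying $\lambda$ to $\lambda'$ satisfies $\pi_{+}\circ\Psi=\pi_{+}$ and $\Psi\circ\lambda=\lambda'$, hence $\pi_{+}\circ\lambda'=\pi_{+}\circ\lambda$ and $(\lambda')^{\ast}\Omega'=\lambda^{\ast}\Psi^{\ast}\Omega'=\lambda^{\ast}\Omega$.

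Next I would verify the homomorphism property by pushing the composition of Morita equivalences with branes through $\chi$. From the description $Z_{1}\ast Z_{2}=(Z_{1}\times_{X}Z_{2})/\Sigma(X)$ with its reduced symplectic form and the composite bisection $\lambda_{1}\ast\lambda_{2}(x)=[\lambda_{1}\circ\pi_{+,2}\circ\lambda_{2}(x),\,\lambda_{2}(x)]$, one reads off $\pi_{+}\circ(\lambda_{1}\ast\lambda_{2})=(\pi_{+,1}\circ\lambda_{1})\circ(\pi_{+,2}\circ\lambda_{2})$, while pulling the reduced form back along the composite bisection gives $(\lambda_{1}\ast\lambda_{2})^{\ast}\Omega=(\pi_{+,2}\circ\lambda_{2})^{\ast}(\lambda_{1}^{\ast}\Omega_{1})+\lambda_{2}^{\ast}\Omega_{2}$. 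Comparing with the semidirect-product law \ref{groupproduct} shows that $\chi$ is multiplicative.

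Then I would produce an explicit inverse $\zeta$. The point making this work is that the relations cutting out $\mathrm{Aut}_\cal{C}(I,\sigma)$ are exactly the assertion that $(I^{F},I,Q,F)$ is a degenerate GK structure of symplectic type: indeed $I^{F}=(\phi^{-1})_{\ast}(I)$ is an integrable complex structure, $\phi_{\ast}Q=Q$ forces $\sigma^{F}:=-\frac{1}{4}(I^{F}Q+iQ)$ to satisfy $\phi_{\ast}\sigma^{F}=\sigma$ so $\sigma^{F}$ is holomorphic Poisson with respect to $I^{F}$, and equations \ref{star1} and \ref{star2} hold by construction. Proposition~\ref{Star->ME} (in the form of Example~\ref{btrans}) applied to this structure, followed by post-composing the first leg with the Poisson biholomorphism $\phi\colon(M,I^{F},\sigma^{F})\to(X,\sigma)$, yields the Morita self-equivalence $(\Sigma(X),\Omega_{\sigma}+t^{\ast}F)$ with projections $\phi\circ t$ and $s$, where $\Omega_{\sigma}$ is the symplectic form of the Weinstein groupoid; equipping it with the identity bisection $\epsilon$ --- a brane bisection since the unit submanifold of a symplectic groupoid is Lagrangian and $F$ is real --- defines $\zeta(\phi,F)$. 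That $\zeta$ inverts $\chi$ is then a direct check: on one side $t\circ\epsilon=\mathrm{id}_{X}$ and $\epsilon^{\ast}\Omega_{\sigma}=0$ give $\chi(\zeta(\phi,F))=((\phi\circ t)\circ\epsilon,\,\epsilon^{\ast}(\Omega_{\sigma}+t^{\ast}F))=(\phi,F)$; on the other, for $(Z,\Omega,\lambda)$ the principal $\Sigma(X)$-action together with $\lambda$ assemble, exactly as in the proof of Theorem~\ref{main}, into an isomorphism $\Psi\colon\Sigma(X)\to Z$, $g\mapsto\lambda(t(g))\ast g$, intertwining the projections with $\phi\circ t$ and $s$, sending $\epsilon$ to $\lambda$, and satisfying $\Psi^{\ast}\Omega=\Omega_{\sigma}+t^{\ast}F$, so $\zeta(\chi([Z,\Omega,\lambda]))=[Z,\Omega,\lambda]$.

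I expect the only genuinely substantive points to be the two structural identifications: that $\mathrm{Aut}_\cal{C}(I,\sigma)$, cut out by $FI+I^{\ast}F+FQF=0$ and $\phi_{\ast}(I^{F})=I$, coincides with the data of a degenerate GK structure of symplectic type (which is what makes $\zeta$ well-defined and lets one invoke Proposition~\ref{Star->ME}), and that the composite-bisection formula interacts with the reduced symplectic form so as to reproduce the law \ref{groupproduct}. Everything else --- well-definedness on isomorphism classes, and the two compositions $\chi\circ\zeta$ and $\zeta\circ\chi$ --- is formal bookkeeping built on Theorem~\ref{main}, so I do not anticipate any obstacle beyond careful tracking of the maps.
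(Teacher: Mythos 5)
Your proposal is correct and follows essentially the same route as the paper's proof: the same composite-bisection computation for the homomorphism property, and the same explicit inverse $\zeta(\phi,F)=(\Sigma(X),\Omega+t^{\ast}F,\epsilon)$ built from Proposition~\ref{Star->ME}. The only difference is that you spell out the well-definedness check and the two composites $\chi\circ\zeta$, $\zeta\circ\chi$ in more detail, where the paper simply appeals to the preceding discussion and to Theorem~\ref{main}.
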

As a result of this isomorphism, we immediately see that the kernel of the map $\mathrm{Bis}^\cal{L}(\Sigma(X)) \to \BPic(X, \sigma)$ is given by the subgroup $\mathrm{IsoLBis}(\Sigma(X))$ of holomorphic Lagrangian bisections of $(\Sigma(X), \Omega)$ which induce the identity diffeomorphism on $M$. Collecting these facts we get the following exact sequence of groups 
\begin{equation} \label{exactseq}
1 \to \mathrm{IsoLBis}(\Sigma(X)) \to \mathrm{Bis}^\cal{L}(\Sigma(X)) \to \mathrm{Aut}_\cal{C}({I, \sigma}) \to \Pic(X, \sigma).
\end{equation}
\begin{remark} Bursztyn and Fernandes studied a similar sequence in \cite{bursztyn2015picard} in the setting of real smooth Poisson structures. The above may be seen as a generalization of their results to the case of holomorphic Poisson structures. 
\end{remark}
\begin{remark}
A central claim of this paper has been that the holomorphic substructure underlying GK manifolds of symplectic type consists of holomorphic symplectic Morita equivalences, and that the additional real data needed to determine the metric consists of a brane bisection. This is reflected in the sequence \ref{exactseq} by the fact that $\mathrm{Aut}_\cal{C}({I, \sigma})$ is an extension of its image in the holomorphic Picard group by the group of brane bisections.
\end{remark}

\section{Generalized K\"ahler metrics via Hamiltonian flows}\label{hamflo}
An important application of the ideas of Section~\ref{picsec} is to the construction and deformation of generalized K\"ahler metrics.  The basic idea is as follows: 
Theorem \ref{main} allows us to view GK structures of symplectic type as Morita equivalences with brane bisection, which are morphisms in the groupoid $\BPG$. So, it is possible to compose them.  More precisely, if $(Z_{1}, \Omega_{1}, \cal{L}_{1})$ and $(Z_{2}, \Omega_{2}, \cal{L}_{2})$ are degenerate GK structures of symplectic type going between holomorphic Poisson structures $(X_{1}, \sigma_{1}), (X_{2}, \sigma_{2})$ and $(X_{2}, \sigma_{2}), (X_{3}, \sigma_{3})$ respectively, then we may compose them to get a degenerate GK structure going between $(X_{1}, \sigma_{1})$ and $(X_{3}, \sigma_{3})$. In particular, there is an action of the group $\BPic(X_{+}, \sigma_{+})$ on the space of morphisms $\BPG((X_{+}, \sigma_{+}), (X_{-}, \sigma_{-}))$ and we can use this to \emph{deform} GK structures of symplectic type. 
Indeed, all of the constructions of GK metrics contained in~\cite{MR2371181,MR2217300,gualtieri2010branes} are special cases of this.  

The idea of the construction in \cite{gualtieri2010branes} is to start with the infinitesimal counterpart of $\BPic(X,\sigma) \cong \mathrm{Aut}_\cal{C}({I, \sigma})$, i.e. an infinitesimal Courant symmetry, integrate it to a Courant automorphism (a path in $\mathrm{Aut}_\cal{C}({I, \sigma})$), and then use this to deform a given GK structure. We will see how this manifests itself in terms of Morita equivalences with brane bisections. In particular we will see that in certain cases these deformations are obtained by flowing the brane bisections using Hamiltonian vector fields.
%

The Lie algebra of $\mathrm{Aut}_\cal{C}({I, \sigma})$ is the following subalgebra of $\frak{X}_{Q}(M) \ltimes \Omega^{2, cl}(M)$, the semi-direct product of the Lie algebra of vector fields preserving $Q$ with the group of closed $2$-forms:
\[
\frak{aut}_\cal{C}(I, \sigma) = \{ (V, \omega) \in \frak{X}_{Q}(M) \times \Omega^{2, cl}(M) \ | \ \omega I + I^{\ast} \omega = 0 \text{ and } \cal{L}_{V}I = Q \omega \}.
\]
This is the Lie algebra of \emph{infinitesimal Courant symmetries} of the holomorphic Poisson structure $(I, \sigma)$. Let $(V, \omega) \in \frak{aut}_\cal{C}(I, \sigma)$ be an element of this Lie algebra. This defines an infinitesimal automorphism of the Courant algebroid $T \oplus T^{*}$ and therefore it integrates to the following family of automorphisms of $T\oplus T^{*}$:
\[
(\phi_{t}, F_{t} = \int_{0}^{t} (\phi_{s}^{\ast}\omega) ds),
\]
where $\phi_{t}$ is the flow of the vector field $V$. In \cite[Section 7]{gualtieri2010branes} it is shown that this family lies in $\mathrm{Aut}_\cal{C}({I, \sigma})$ for all $t$ (where it is defined), and therefore this family defines the $1$-parameter subgroup integrating $(V, \omega)$. Hence we have the following result.
\begin{lemma}
The exponential map 
\[
\exp : \frak{aut}_\cal{C}(I, \sigma) \to \mathrm{Aut}_\cal{C}({I, \sigma}) \cong \BPic(X, \sigma)
\]
is given by flowing the above family of automorphisms to $t = 1$:
\[
(V, \omega) \mapsto (\phi_{1}, F_{1} = \int_{0}^{1} (\phi_{s}^{\ast} \omega) ds).
\]
\end{lemma}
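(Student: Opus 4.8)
The plan is to unwind the definition of the exponential on the (infinite-dimensional) group $\mathrm{Aut}_\cal{C}(I,\sigma)$: by definition, $\exp(V,\omega)$ is the value at time $1$ of the unique one-parameter subgroup $c\colon(\mathbb{R},+)\to\mathrm{Aut}_\cal{C}(I,\sigma)$ with $\dot c(0)=(V,\omega)$. So it suffices to verify that the explicit path $c(t)=(\phi_t,F_t)$, with $\phi_t$ the flow of $V$ and $F_t=\int_0^t\phi_s^{\ast}\omega\,ds$, (i) lies in $\mathrm{Aut}_\cal{C}(I,\sigma)$, (ii) is a homomorphism, and (iii) has velocity $(V,\omega)$ at the origin. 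Point (i) is exactly the statement recalled just before the lemma and established in \cite[Section 7]{gualtieri2010branes}, namely that $(\phi_t,F_t)$ lies in $\mathrm{Aut}_\cal{C}(I,\sigma)$ for all $t$ in the interval on which the flow is defined, so I would simply cite it.

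For (ii), I would compute $c(s)\ast c(t)$ directly from the multiplication law~\eqref{groupproduct}. On the diffeomorphism factor, the first component is $\phi_s\circ\phi_t=\phi_{s+t}$ by the flow property. On the $2$-form factor, \eqref{groupproduct} gives $\phi_t^{\ast}F_s+F_t$, and using $\phi_t^{\ast}\phi_r^{\ast}=(\phi_r\circ\phi_t)^{\ast}=\phi_{r+t}^{\ast}$ together with the substitution $u=r+t$,
\begin{align*}
\phi_t^{\ast}F_s+F_t &= \int_0^s\phi_{r+t}^{\ast}\omega\,dr+\int_0^t\phi_r^{\ast}\omega\,dr \\
&= \int_t^{s+t}\phi_u^{\ast}\omega\,du+\int_0^t\phi_u^{\ast}\omega\,du=\int_0^{s+t}\phi_u^{\ast}\omega\,du=F_{s+t}.
\end{align*}
Since moreover $c(0)=(\mathrm{id},0)$, the identity of the group, $c$ is a homomorphism. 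For (iii), the derivative of the first component at $t=0$ is $V$ by definition of the flow, and the derivative of $F_t=\int_0^t\phi_s^{\ast}\omega\,ds$ at $t=0$ equals $\phi_0^{\ast}\omega=\omega$ by the fundamental theorem of calculus; hence $\dot c(0)=(V,\omega)$. By uniqueness of the one-parameter subgroup with this initial velocity, $\exp(V,\omega)=c(1)=(\phi_1,F_1)$, as claimed.

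I do not expect a genuine obstacle here; the calculation is essentially the standard formula for the exponential in a semidirect product $G\ltimes\Omega^{2,\mathrm{cl}}(M)$ in which $G$ acts by pullback, and the shape $F_t=\int_0^t\phi_s^{\ast}\omega\,ds$ is forced by that structure. The only points needing care are bookkeeping-level: keeping the order of composition and the direction of the pullback in~\eqref{groupproduct} consistent throughout the change of variables, and the fact that $\mathrm{Aut}_\cal{C}(I,\sigma)$ is not a finite-dimensional Lie group, so ``uniqueness of one-parameter subgroups'' must be read concretely --- uniqueness of the flow of $V$ is an ODE statement, and the $2$-form factor is then pinned down by differentiating the homomorphism relation $F_{s+t}=\phi_t^{\ast}F_s+F_t$ in $s$ at $s=0$. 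The one substantive input, that the path never leaves $\mathrm{Aut}_\cal{C}(I,\sigma)$, is supplied by \cite{gualtieri2010branes}.
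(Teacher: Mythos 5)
Your proposal is correct and follows essentially the same route as the paper: the paper likewise cites \cite[Section 7]{gualtieri2010branes} for the fact that the family $(\phi_t, F_t)$ stays in $\mathrm{Aut}_\cal{C}(I,\sigma)$ and then concludes it is the one-parameter subgroup integrating $(V,\omega)$. The only difference is that you spell out the semidirect-product homomorphism check $\phi_t^{\ast}F_s+F_t=F_{s+t}$ and the initial-velocity computation, which the paper treats as implicit in the statement that $(V,\omega)$ integrates to this family of Courant automorphisms.
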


The Lie algebra of the group of brane bisections $\mathrm{Bis}^\cal{L}(\Sigma(X))$ is given by the space of closed $1$-forms $\Omega^{1, cl}(M)$ with Lie bracket induced by $Q$ \cite{xu1997flux}. The exponential map $\exp : \Omega^{1, cl}(M) \to \mathrm{Bis}^\cal{L}(\Sigma(X))$ has the following description: given $\alpha \in \Omega^{1, cl}(M)$, we can consider the vector field induced by $t^{\ast}\alpha$ on the Weinstein groupoid with respect to the imaginary part of the symplectic form $\omega = \Im(\Omega)$:
\[
V_{t^{\ast} \alpha} = \omega^{-1}(t^{\ast} \alpha).
\]
This is the right-invariant vector field associated to $\alpha$ when it is viewed as a section of the Lie algebroid $T^{\ast}_{Q}M$ of $(\Sigma(X), \omega)$. It is $t$-related to the vector field $Q(\alpha)$ on $X$. Let $\phi_{t}$ be the flow of $V_{t^{\ast}\alpha}$. Then the $1$-parameter subgroup integrating $\alpha$ is given by the family $\lambda_{t} = \phi_{t} \circ \epsilon$ of bisections, where $\epsilon$ is the identity bisection, so that  
\[
\exp(\alpha) = \lambda_{1}.
\]
Mapping the $1$-parameter subgroup $\lambda_{t}$ to $\mathrm{Aut}_\cal{C}({I, \sigma})$ and taking the derivative at $t = 0$ determines the induced Lie algebra morphism $\Omega^{1, cl}(M) \to \frak{aut}_\cal{C}(I, \sigma)$: $\lambda_{t} \in \mathrm{Bis}^\cal{L}(\Sigma(X))$ gets sent to the family $[ \Sigma(X), \Omega, \lambda_{t} ] \in \BPic(X, \sigma)$ of Morita equivalences with brane bisections, and this corresponds in $\mathrm{Aut}_\cal{C}({I, \sigma})$ to the family of Courant automorphisms given by $(\psi_{t}, \int_{0}^{t} (\psi_{s}^{\ast} d^{c} \alpha) ds )$, where $\psi_{t}$ is the flow of $Q(\alpha)$ and $d^{c} = i (\bar{\del} - \del)$. Therefore the map of Lie algebras is given by 
\[
\Omega^{1, cl}(M) \to \frak{aut}_\cal{C}(I, \sigma), \qquad \alpha \mapsto (Q(\alpha), d^{c} \alpha). 
\]
One upshot of the present discussion is that the exponential map for $\mathrm{Aut}_\cal{C}({I, \sigma})$ has a particularly nice description when it is applied to elements in the image of $\Omega^{1, cl}(M)$.
\begin{proposition} \label{flow1}
Let $\alpha \in \Omega^{1, cl}(M)$ be a real closed $1$-form and let $(Q(\alpha), d^{c} \alpha) \in  \frak{aut}_\cal{C}(I, \sigma)$ be the infinitesimal Courant symmetry that it determines. Exponentiating this symmetry yields a family of holomorphic symplectic Morita equivalences with brane bisections. This family has the following simple form:
\[
[ (\Sigma(X), \Omega, \phi_{t} \circ \epsilon )],
\]
where $(\Sigma(X), \Omega)$ is the Weinstein groupoid viewed as a trivial Morita equivalence, $\phi_{t}$ is the flow of the vector field $(\Im \Omega)^{-1}(t^{\ast} \alpha)$ on the groupoid, and $\phi_{t} \circ \epsilon$ is the result of applying this flow to the identity bisection. 
\end{proposition}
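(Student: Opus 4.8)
The plan is to read the statement off from the structures already assembled in this section, using naturality of the exponential map under the group homomorphism $\mathrm{Bis}^\cal{L}(\Sigma(X)) \to \BPic(X,\sigma) \cong \mathrm{Aut}_\cal{C}(I,\sigma)$. Concretely, I will show that $t \mapsto [(\Sigma(X),\Omega,\phi_t\circ\epsilon)]$ is the image of the one-parameter subgroup of $\mathrm{Bis}^\cal{L}(\Sigma(X))$ generated by $\alpha$, that its generator in $\mathrm{Aut}_\cal{C}(I,\sigma)$ is $(Q(\alpha),d^{c}\alpha)$, and that a one-parameter subgroup of $\mathrm{Aut}_\cal{C}(I,\sigma)$ is uniquely determined by its generator, so that the curve must coincide with $\exp\bigl(t\,(Q(\alpha),d^{c}\alpha)\bigr)$.

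The first step is to recognize $\lambda_t := \phi_t\circ\epsilon$ as the one-parameter subgroup of $\mathrm{Bis}^\cal{L}(\Sigma(X))$ integrating $\alpha\in\Omega^{1, cl}(M)$; this is exactly the description of $\exp\colon\Omega^{1, cl}(M)\to\mathrm{Bis}^\cal{L}(\Sigma(X))$ recalled just before the proposition. The inputs are facts about the symplectic groupoid: $V_{t^{*}\alpha}=\omega^{-1}(t^{*}\alpha)$ (with $\omega=\Im(\Omega)$) is the right-invariant vector field attached to $\alpha$, hence tangent to the $\pi_-$-fibres, so each $\lambda_t$ is a section of $\pi_-$; since $t^{*}\alpha$ is closed, $V_{t^{*}\alpha}$ is $\omega$-symplectic, so its flow carries the Lagrangian unit submanifold $\epsilon(M)$ to a Lagrangian and thus $\lambda_t^{*}\Im(\Omega)=0$; and $\pi_+\circ\lambda_t=\psi_t$, the flow of $Q(\alpha)$, because $V_{t^{*}\alpha}$ is $t$-related to $Q(\alpha)$. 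Pushing $\lambda_t$ forward along $\lambda\mapsto[(\Sigma(X),\Omega,\lambda)]$ and then along the isomorphism $\chi$ of the preceding corollary yields the one-parameter subgroup $t\mapsto\chi([(\Sigma(X),\Omega,\lambda_t)])=(\psi_t,\lambda_t^{*}\Omega)$ of $\mathrm{Aut}_\cal{C}(I,\sigma)$, whose generator is $(Q(\alpha),d^{c}\alpha)$ by the Lie-algebra computation already carried out above.

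It then remains to see that a one-parameter subgroup of $\mathrm{Aut}_\cal{C}(I,\sigma)$ is determined by its generator. This is elementary given the product rule $(\phi_1,F_1)\ast(\phi_2,F_2)=(\phi_1\circ\phi_2,\phi_2^{*}F_1+F_2)$ from the Lemma above: any one-parameter subgroup $(\phi_t,F_t)$ has $\phi_t$ equal to the flow of the vector-field part $V$ of its generator, and differentiating $F_{t+s}=\phi_s^{*}F_t+F_s$ in $s$ at $s=0$ gives $\dot F_t=\phi_t^{*}\omega$, hence $F_t=\int_0^t\phi_r^{*}\omega\,dr$; so the subgroup is precisely $\exp(t(V,\omega))$. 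Applying this with $(V,\omega)=(Q(\alpha),d^{c}\alpha)$ identifies $\exp\bigl(t\,(Q(\alpha),d^{c}\alpha)\bigr)$ with $[(\Sigma(X),\Omega,\phi_t\circ\epsilon)]$, which is the claim.

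The step requiring the most care — bookkeeping rather than a genuine obstacle — is the first one, where the symplectic-groupoid structure (Lagrangian units, multiplicativity of $\Omega$, right-invariance of $\omega^{-1}(t^{*}\alpha)$) really enters, to guarantee that $\phi_t\circ\epsilon$ is honestly a \emph{brane} bisection (a section of $\pi_-$, Lagrangian for $\Im(\Omega)$) and that the $\lambda_t$ assemble into a one-parameter group; everything after that is formal. As an alternative to the naturality argument one can compute directly: $\chi([(\Sigma(X),\Omega,\phi_t\circ\epsilon)])=(\psi_t,(\phi_t\circ\epsilon)^{*}\Omega)$, and differentiating $(\phi_t\circ\epsilon)^{*}\Omega$ in $t$ — using that $\Omega$ is closed, that $\iota_{V_{t^{*}\alpha}}\Omega$ has imaginary part $t^{*}\alpha$, and that the target map is holomorphic, hence intertwines the $d^{c}$ operators on $X$ and on $\Sigma(X)$ — yields $(\phi_t\circ\epsilon)^{*}\Omega=\int_0^t\psi_s^{*}(d^{c}\alpha)\,ds$, which by the Lemma is exactly $\exp\bigl(t\,(Q(\alpha),d^{c}\alpha)\bigr)$.
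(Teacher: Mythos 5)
Your argument is correct and follows essentially the same route as the paper, which presents Proposition~\ref{flow1} as the ``upshot'' of the preceding discussion: identify $\phi_t\circ\epsilon$ as the one-parameter subgroup of $\mathrm{Bis}^{\mathcal{L}}(\Sigma(X))$ integrating $\alpha$, push it through $\chi$ to get the subgroup of $\mathrm{Aut}_{\mathcal{C}}(I,\sigma)$ with generator $(Q(\alpha),d^{c}\alpha)$, and conclude by uniqueness of one-parameter subgroups. You merely make explicit two points the paper leaves implicit --- the verification that $\phi_t\circ\epsilon$ is genuinely a brane bisection and the determination of a one-parameter subgroup by its generator via the product rule --- and your closing direct computation of $(\phi_t\circ\epsilon)^{*}\Omega$ is the same calculation the paper later performs in the proof of Theorem~\ref{GKlocallyflow}.
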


We now explain how the flow of infinitesimal Courant symmetries can be used to deform a given GK structure of symplectic type to a nearby one. Such deformations are obtained via the action of the group $\BPic(X_{+}, \sigma_{+})$ on the space of morphisms $\BPG((X_{+}, \sigma_{+}), (X_{-}, \sigma_{-}))$. More precisely, let
\[
\xymatrix{
 &(Z,\Omega, \cal{L})\ar[dl]_-{\pi_{+}}\ar[dr]^-{\pi_{-}} & \\
 (X_{+},\sigma_{+})& & (X_{-}, \sigma_{-})
}
%
%
\]
be a GK structure viewed as a Morita equivalence with brane bisection, and let $\zeta(\phi,F)$
be an element of $\BPic(X_{+}, \sigma_{+})$ corresponding to the Courant automorphism $(\phi, F) \in \mathrm{Aut}_\cal{C}({I_{+}, \sigma_{+}})$. Composing with the above Morita equivalences, we obtain $\zeta(\phi,F) \ast (Z,\Omega,\cal{L}) = (Z,\Omega + \pi_{+}^{\ast}F, \cal{L})$, again a morphism in $\BPG((X_{+}, \sigma_{+}), (X_{-}, \sigma_{-}))$ as depicted below. 
\[
\xymatrix{
 &(Z,\Omega + \pi_{+}^{\ast}F, \cal{L})\ar[dl]_-{\phi\circ \pi_{+}}\ar[dr]^-{\pi_{-}} & \\
(X_{+},\sigma_{+})& & (X_{-}, \sigma_{-})
}
\]
Since the positivity of the induced metric is an open condition, this will define a new GK structure if $(\phi, F)$ is `close enough' to the identity. Given an infinitesimal symmetry $(V, \omega) \in \frak{aut}_\cal{C}(I_{+}, \sigma_{+})$ we can exponentiate it to the family $\zeta(\exp(t(V,\omega)))$ in $\BPic(X_{+}, \sigma_{+})$. Then $\zeta(\exp(t(V,\omega))\ast (Z,\Omega,\cal{L})$
defines a family of degenerate GK structures deforming the initial structure, and the metric will remain positive-definite for sufficiently small $t$. This family of deformations has a particularly nice form for infinitesimal symmetries in the image of $\Omega^{1, cl}(M)$.

\begin{proposition} \label{flow2}
Let $(Z,\Omega, \cal{L})$ be a GK structure of symplectic type, viewed as a holomorphic symplectic Morita equivalence with brane bisection going between the holomorphic Poisson structures $(I_{\pm}, \sigma_{\pm})$. Let $\alpha \in \Omega^{1, cl}(M)$ be a real closed $1$-form, and let $(Q(\alpha), d^{c} \alpha) \in  \frak{aut}_\cal{C}(I_{+}, \sigma_{+})$ be the infinitesimal Courant symmetry that it determines. Exponentiating this symmetry, we get a 1-parameter family of Courant automorphisms which act on $(Z,\Omega, \cal{L})$ to produce a family of degenerate GK structures of symplectic type deforming the given one. This family has the following simple form:
\[
[(Z, \Omega, \cal{L}_{t} = \eta_{t}(\cal{L}))],
\]
where $\eta_{t}$ is the flow of the vector field $V_{\pi_{+}^{\ast} \alpha} = (\Im \Omega)^{-1}( \pi_{+}^{\ast} \alpha)$, and $\cal{L}_{t} = \eta_{t}(\cal{L})$ is the result of applying this flow to the brane bisection $\cal{L}$.
\begin{proof}
Let $\lambda_{t} = \phi_{t} \circ \epsilon$ be the 1-parameter subgroup corresponding to $\alpha$ in $\mathrm{Bis}^\cal{L}(\Sigma(X))$, where $\phi_{t}$ is the flow of $V_{t^{\ast} \alpha}$. By Proposition~\ref{flow1}, the family of degenerate GK structures obtained by exponentiating $(Q(\alpha), d^{c} \alpha)$ and acting on the given GK structure is given by the composition of $(\Sigma(X), \Omega, \lambda_{t})$ and $ (Z,\Omega, \cal{L})$. This gives $(Z, \Omega, \cal{L}_{t} = \lambda_{t} \ast \cal{L})$, where 
\[
(\lambda_{t} \ast \cal{L})(x) = \lambda_{t}(\pi_{+} \circ \cal{L}(x)) \ast \cal{L}(x),
\]
and where we are using the action $\ast$ of $\Sigma(X)$ on $Z$. Note that we are abusing notation by using $\cal{L}$ to denote both the section of $\pi_{-}$ and its image in $Z$. Now let $V_{\pi_{+}^{\ast} \alpha} = \Im(\Omega)^{-1}( \pi_{+}^{\ast} \alpha)$, a vector field on $Z$, and let $\eta_{t}$ be its flow. We claim that $\cal{L}_{t} = \eta_{t}(\cal{L})$. This can be seen as follows. Using the brane $\cal{L}$ we define a smooth symplectomorphism 
\[
\Phi: (\Sigma(X_{+}), \Im(\Omega)) \to (Z, \Im(\Omega)), \qquad g \mapsto g \ast (r \circ s)(g),
\]
where $r : X_{+} \to Z$ is the section of $\pi_{+}$ induced by $\cal{L}$, and $s$ is the source map of the Weinstein groupoid. This map satisfies the relation $\pi_{+} \circ \Phi = t$, and therefore $d\Phi (V_{t^{\ast} \alpha}) = V_{\pi_{+}^{\ast} \alpha}$. This implies that $\Phi$ intertwines the flows $\phi_{t}$ and $\eta_{t}$ of the vector fields $V_{t^{\ast} \alpha}$ and $V_{\pi_{+}^{\ast} \alpha}$, respectively. Then, by the definitions of $\Phi$ and $\cal{L}_{t}$ we see that on the one-hand 
\[
\Phi \circ \lambda_{t} \circ \pi_{+}\circ \cal{L}(x) = \lambda_{t}(\pi_{+} \circ \cal{L}(x)) \ast \cal{L}(x) = \cal{L}_{t}(x),
\]
and by the property that $\Phi$ intertwines $\phi_{t}$ and $\eta_{t}$ we see that on the other hand 
\[
\Phi \circ \lambda_{t} \circ \pi_{+}\circ \cal{L}(x) = \eta_{t} \circ \Phi(1_{\pi_{+} \circ \cal{L}(x)}) = \eta_{t}(\cal{L}(x)).
\]
\end{proof}
\end{proposition}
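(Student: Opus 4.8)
The plan is to reduce the statement to a single composition of Morita equivalences with brane bisection, and then to recognize the answer as a flowed brane. First I would invoke Proposition~\ref{flow1}: exponentiating the infinitesimal Courant symmetry $(Q(\alpha), d^{c}\alpha) \in \frak{aut}_\cal{C}(I_{+}, \sigma_{+})$ yields the family $[(\Sigma(X_{+}), \Omega, \lambda_{t})]$ in $\BPic(X_{+}, \sigma_{+})$, where $\lambda_{t} = \phi_{t} \circ \epsilon$, $\phi_{t}$ is the flow on the Weinstein groupoid of the vector field $(\Im \Omega)^{-1}(t^{\ast}\alpha)$, and $\epsilon$ is the identity bisection. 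Acting with this class on $(Z, \Omega, \cal{L})$ means composing $(\Sigma(X_{+}), \Omega, \lambda_{t})$ with $(Z, \Omega, \cal{L})$ in $\BPG$; since the Weinstein groupoid with its multiplicative form is the identity Morita self-equivalence, the composite is carried by $(Z, \Omega)$ and only the brane changes, becoming $\cal{L}_{t} = \lambda_{t} \ast \cal{L}$. By the composition-of-bisections formula recorded in the proof of the Corollary above, $\cal{L}_{t}(x) = \lambda_{t}(\pi_{+}(\cal{L}(x))) \ast \cal{L}(x)$, where $\ast$ denotes the left action of $\Sigma(X_{+})$ on $Z$. It then remains only to prove $\cal{L}_{t} = \eta_{t}(\cal{L})$.

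For this I would introduce the map $\Phi : \Sigma(X_{+}) \to Z$ that trivializes $Z$ using the brane, namely $g \mapsto g \ast (r \circ s)(g)$, where $r : X_{+} \to Z$ is the section of $\pi_{+}$ determined by $\cal{L}$ and $s$ is the source map of $\Sigma(X_{+})$. As in the proof of Theorem~\ref{main}, the Lagrangian condition on the brane bisection guarantees that $\Phi$ intertwines the imaginary parts $\Im \Omega$ of the holomorphic symplectic forms on $\Sigma(X_{+})$ and on $Z$, and by construction $\pi_{+} \circ \Phi = t$. Combining these, $\Phi^{\ast}(\pi_{+}^{\ast}\alpha) = t^{\ast}\alpha$, so $\Phi$ carries the vector field $(\Im \Omega)^{-1}(t^{\ast}\alpha)$ on $\Sigma(X_{+})$ --- whose flow is $\phi_{t}$ --- to $V_{\pi_{+}^{\ast}\alpha} = (\Im \Omega)^{-1}(\pi_{+}^{\ast}\alpha)$ on $Z$, whose flow is $\eta_{t}$; hence $\Phi \circ \phi_{t} = \eta_{t} \circ \Phi$.

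The conclusion is then a short computation. Because $(\Im \Omega)^{-1}(t^{\ast}\alpha)$ is the right-invariant vector field attached to $\alpha$ as a section of the Lie algebroid $T^{\ast}_{Q}M$, it is tangent to the source fibres, so $s \circ \phi_{t} = s$; writing $p = \pi_{+}(\cal{L}(x))$, this gives $(r \circ s)(\lambda_{t}(p)) = r(p) = \cal{L}(x)$, whence $\Phi(\lambda_{t}(p)) = \lambda_{t}(p) \ast \cal{L}(x) = \cal{L}_{t}(x)$. On the other hand $\Phi(1_{p}) = 1_{p} \ast r(p) = \cal{L}(x)$ and $\lambda_{t}(p) = \phi_{t}(1_{p})$, so the intertwining relation gives $\Phi(\lambda_{t}(p)) = \eta_{t}(\Phi(1_{p})) = \eta_{t}(\cal{L}(x))$. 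Comparing the two expressions yields $\cal{L}_{t} = \eta_{t}(\cal{L})$, as claimed. The step I expect to be the main obstacle is verifying that $\Phi$ is a symplectomorphism for $\Im \Omega$ --- this is precisely where the Lagrangian condition on the brane and the full symplectic--Morita--equivalence structure of $(Z, \Omega)$ are genuinely used --- together with keeping the groupoid conventions (source versus target, left versus right action) consistent so that $\pi_{+} \circ \Phi = t$ and the flow-intertwining come out correctly; the rest is bookkeeping.
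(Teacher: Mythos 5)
Your proposal is correct and follows essentially the same route as the paper: reduction to the composite $(Z,\Omega,\lambda_{t}\ast\cal{L})$ via Proposition~\ref{flow1}, followed by the trivialization $\Phi(g)=g\ast(r\circ s)(g)$ and the flow-intertwining argument. The only differences are expository — you make explicit the use of $s\circ\phi_{t}=s$ and of the symplectomorphism property of $\Phi$ in deducing $d\Phi(V_{t^{\ast}\alpha})=V_{\pi_{+}^{\ast}\alpha}$, both of which the paper leaves implicit.
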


\begin{remark}
In Propositions \ref{flow1} and \ref{flow2}, we may specialize to the case of exact $1$-forms $\alpha = dK$, in which case the families we obtain are given by flowing the brane bisections by Hamiltonian vector fields.
\end{remark}

\begin{example}
Recall from Example~\ref{hyperKahler} that the Morita equivalence of a hyper-K\"ahler structure $(M, I, J, K, g)$ is given by 
\[
(Z, \Omega) = (X_{+}, \Omega_{+}) \times (X_{-}, -\Omega_{-}),
\]
with brane bisection $\cal{L}$ given by the diagonally embedded copy of $M$. It was observed in~\cite{MR1702248} that such a GK structure may be deformed to a new GK structure which is not hyper-K\"ahler using a real-valued function $f$. 
In the present setting we can view this as a deformation obtained by flowing the brane bisection using a Hamiltonian vector field of $f$. More precisely, given a real valued-function $f$, Proposition \ref{flow2} says that the brane is flowed by the Hamiltonian vector field of $\pi_{+}^{\ast} f$, using the imaginary part of the symplectic form on $Z$, namely, $\Im (\Omega_{+}, - \Omega_{-}) = (\omega_{K}, - \omega_{K})$. Therefore, the brane is given by the flow of 
\[
(\omega_{K}^{-1}(df), 0),
\]
and so if $\phi_{t}$ is the flow of the Hamiltonian vector field of $f$ with respect to $\omega_{K}$ then the deformed brane is given by 
\[
\cal{L}_{t} = \{ (\phi_{t}(m), m) \ | \ m \in M\},
\]
which is the graph of $\phi_{t}$.
\end{example}

\section{Universal local construction via time-dependent flows}\label{locdef}
Let $(M, I, \sigma)$ be a holomorphic Poisson structure, which determines a degenerate GK structure as in Example~\ref{trivdeg}.  As explained in Example~\ref{integtriv}, the corresponding Morita equivalence provided by Theorem~\ref{main} is the symplectic groupoid $(\Sigma(X), \Omega, \epsilon)$ integrating $\sigma$, with the identity bisection $\eps$ as its brane bisection. Propositions \ref{flow1} and \ref{flow2} tell us that given a real-valued function $f$ we can construct a family of degenerate GK structures deforming the given one by deforming the identity bisection $\epsilon$ using the Hamiltonian vector field of $t^{\ast}f$ on the groupoid: $V_{t^{\ast}df} = \omega^{-1}(dt^{\ast}f)$, where here we decompose $\Omega = B+i\omega$ into its real and imaginary parts. In this section, we show that, locally, all degenerate GK structures of symplectic type arise in this way if we allow the function $f$ to depend on time. This gives a universal local construction (similar to the one using a generalized K\"ahler potential) which places a greater emphasis on the underlying holomorphic Poisson geometry. In effect, it tells us that locally a GK structure of symplectic type is determined by a holomorphic Poisson structure and a single time-dependent real-valued function.

Let $(Z, \Omega, \cal{L})$ be a GK structure viewed as a Morita equivalence with brane bisection, and let $z \in \cal{L}$ be a point on the brane. Choose a local holomorphic Lagrangian bisection $\Lambda$ passing through the point $z$ and let $U := \pi_{-}(\Lambda) \subseteq X_{-}$, and $V := \pi_{+}(\Lambda) \subseteq X_{+}$. Note first that this bisection induces a holomorphic Poisson isomorphism $\phi : (U, \sigma_{-}) \to (V, \sigma_{+})$; this means that the two holomorphic Poisson structures underlying a GK structure of symplectic type are always locally isomorphic, although not in a canonical way. Now consider the local Morita equivalence $\pi_{+}^{-1}(V) \cap \pi_{-}^{-1}(U)$ going between $(U, \sigma_{-})$ and $(V, \sigma_{+})$. Using the Lagrangian $\Lambda$, we can identify this Morita equivalence with the trivial Morita equivalence $t^{-1}(U) \cap s^{-1}(U) \subseteq \Sigma(X_{-})$. That is to say, we have a holomorphic symplectomorphism 
\begin{equation}\label{grpcht}
\Phi : t^{-1}(U) \cap s^{-1}(U) \to \pi_{+}^{-1}(V) \cap \pi_{-}^{-1}(U), \qquad g \mapsto \Lambda(t(g)) \ast g,
\end{equation}
satisfying $\pi_{+} \circ \Phi = \phi \circ t$, as well as $\pi_{-} \circ \Phi = s$ and $\Phi \circ \epsilon = \Lambda.$
 We view this as a \emph{groupoid chart}, in the sense that it identifies a neighbourhood of $z$ in $Z$ with a neighbourhood of the zero section in $\Sigma(X_-)$.  The chart is adapted to the groupoid structure, unlike the Darboux charts considered in section \ref{secpot}. In this chart, the brane $\cal{L}$ intersects the identity bisection at the point $z$. Our goal is to describe $\cal{L}$ as a Hamiltonian flow applied to the zero section: for this, we require a family of brane bisections $\cal{L}_{t}$ interpolating between $\cal{L}$ and $\Lambda$. First we show that such a family exists.

\begin{proposition} \label{bisectionpath}
There is a local family of brane bisections interpolating between a given brane $\cal{L}$ and a holomorphic Lagrangian bisection $\Lambda$.
\end{proposition}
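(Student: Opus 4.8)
The plan is to build the interpolating family by scaling a generating function in a suitable chart, and then to isolate the one genuinely delicate point, which is keeping the interpolants \emph{bisections} at all intermediate times.

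First I would pass to a convenient chart near $z$. Choosing a Darboux chart $(T^*\Lambda,\Omega_0)\xrightarrow{\ \sim\ }(Z,\Omega)$ in the sense of Theorem~\ref{GK potential} based on the holomorphic Lagrangian bisection $\Lambda$ (which exists by Weinstein's Lagrangian neighbourhood theorem in the holomorphic category), $\Lambda$ becomes the zero section, and the complementary Lagrangian foliation may be taken so that near $z$ the brane $\cal{L}$ is transverse to the fibres; by Theorem~\ref{GK potential} this makes $\cal{L}=Gr(-i\partial K)$ for a real-valued $K$ on a neighbourhood of $x_0:=\pi_-(z)$ in $\Lambda$, with $F=\Omega|_{\cal{L}}=i\partial\bar\partial K$. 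Since $z\in\cal{L}\cap\Lambda$ and $\Lambda$ is the zero section, $x_0$ is a critical point of $K$. Now set $\cal{L}_t:=Gr(-i\partial(tK))$ for $t\in[0,1]$. Then $\cal{L}_0=\Lambda$ and $\cal{L}_1=\cal{L}$; every $\cal{L}_t$ contains $z$, because $\partial(tK)(x_0)=0$; and $\Omega_0|_{\cal{L}_t}=d(-i\,t\,\partial K)=t\,i\partial\bar\partial K=tF$ is real, so each $\cal{L}_t$ is isotropic, hence Lagrangian by the dimension count, for $\Im(\Omega)$. (One may phrase this purely as the affine interpolation $K_t=tK$ of generalized K\"ahler potentials.)

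The remaining point, and the main obstacle, is to arrange that every $\cal{L}_t$ is a section of both $\pi_-$ and $\pi_+$ on a single common neighbourhood of $z$, so as to be a genuine brane bisection and not merely a Lagrangian submanifold. Being a section near $z$ is an open condition which holds at $t=0$ and $t=1$, but may a priori fail for intermediate $t$ (the relevant transversality determinant is a polynomial in $t$ that we only know to be nonzero at the endpoints). I would handle this by reducing to a statement of symplectic linear algebra at $z$: the set of Lagrangian planes for $\Im(\Omega)$ transverse to both $K_\pm:=\ker(d\pi_\pm)_z$ is connected, and — via the generating-function description — the brane bisections through $z$ with a prescribed tangent plane at $z$ form a convex set; together these make the space of brane bisections through $z$ connected, so that an interpolating family can be extracted from a path in a single component. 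Connectedness of the admissible planes is clear when $K_-$ and $K_+$ are transverse, since then an admissible Lagrangian is the graph of an anti-symplectic isomorphism $K_-\to K_+$, and these form a torsor over a connected symplectic group. The delicate case — and the reason this is not a routine consequence of Weinstein's theorem — is precisely where the Hitchin Poisson structure $Q$ drops rank, so that $K_-\cap K_+=\ker(Q^{\sharp}_{x_0})\neq 0$ and $K_\pm$ fail to be transverse; there I would first reduce modulo the common kernel, where $K_\pm$ do become transverse, and check that the induced reduction map on admissible planes has connected fibres.
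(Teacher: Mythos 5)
Your proposal converges on essentially the same two-step strategy as the paper's proof: (i) connect the tangent planes $T_z\Lambda$ and $T_z\cal{L}$ through the space of $\Im(\Omega)$-Lagrangian planes transverse to the vertical distributions, realized as actual linear Lagrangian submanifolds in a Darboux chart centred at $z$; (ii) interpolate at fixed tangent plane by the affine path $Gr(t\alpha)$ of graphs of closed $1$-forms in a Weinstein chart about the intermediate Lagrangian --- which is exactly your ``convexity of brane bisections with prescribed tangent plane at $z$.'' Your opening move of scaling the potential, $\cal{L}_t=Gr(-i\partial(tK))$, is not what the paper does, and you correctly diagnose why it cannot stand alone: transversality to $\ker(d\pi_\pm)$ is uncontrolled at intermediate $t$. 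Two points of divergence are worth recording. First, you track transversality to \emph{both} $K_+$ and $K_-$ and are consequently led to a case analysis at points where $K_+\cap K_-\neq 0$; the paper sidesteps this entirely by noting that in a Morita equivalence $K_+^{\omega}=K_-$, so for any $\omega$-Lagrangian $L$ one has $L\cap K_-=L^{\omega}\cap K_+^{\omega}=(L+K_+)^{\omega}=0$ whenever $L$ is transverse to $K_+$; hence only transversality to the single half-dimensional subspace $K_+$ need be arranged, and your proposed symplectic reduction modulo $K_+\cap K_-$ is unnecessary. Second, the connectedness of the set of Lagrangian planes transverse to a fixed half-dimensional subspace is simply asserted in the paper (as a connected open subset of the Lagrangian Grassmannian), whereas you attempt to prove it: your argument is complete in the transverse case (graphs of anti-symplectic isomorphisms $K_-\to K_+$, a torsor over the connected group $\mathrm{Sp}(K_+)$) but the reduction argument for the degenerate case is left unverified. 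That is the one unfinished step in your write-up --- though it is precisely the point the paper also passes over without proof, so it does not constitute a gap relative to the published argument.
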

Such a family of brane bisections consists of a family of Lagrangian submanifolds for $\omega = \Im(\Omega)$ which is transverse to both $K_{\pm} = \ker (d \pi_{\pm})$ at all times. Note that since $K_{+}$ and $K_{-}$ are symplectic orthogonal, a Lagrangian $L$ which is transverse to $K_{+}$ is automatically transverse to $K_{-}$:
\[
L \cap K_{-} = L^{\omega} \cap K_{+}^{\omega} = (L + K_{+})^{\omega} = TZ^{\omega} = 0.
\]
The linear version of this problem has an immediate solution: let $(V, \omega)$ be a $2n$-dimensional (real) symplectic vector space and let $K$ be an arbitrary $n$-dimensional subspace. Let $M_{V,K}$ denote the space of Lagrangians in $V$ which are transverse to $K$; this is a connected open subset of the Lagrangian Grassmannian, showing that the linear version of such an interpolation is available. 

\begin{proof}[Proof of Proposition \ref{bisectionpath}]
Choose a holomorphic Darboux chart centred at the point $z$ : $(TZ_{z}, \Omega_{z}) \cong (Z, \Omega)$, and let $\Lambda$ be a holomorphic Lagrangian subspace of $TZ_{z}$ which is transverse to $K_{\pm}$. This defines a (local) holomorphic Lagrangian bisection $\Lambda$. The tangent space $L = T_{z} \cal{L}$ defines a Lagrangian subspace of $(TZ_{z}, \omega_{z})$ which is also transverse to $K_{\pm}$. 
Since $M_{TZ_{z},K_+}$ is connected, we choose a family $L_{t}$ of Lagrangian subspaces of $(TZ_{z}, \omega_{z})$ which remain transverse to $K_{\pm}$ for all time interpolating between $\Lambda$ and $L$. We can view this as a family of brane bisections going from $\Lambda$ to the brane $L$. Hence it remains to find a path going from $L$ to $\cal{L}$. For this choose a Weinstein neighbourhood of $L$: $(T^{\ast}L, \Omega_0) \cong (Z, \omega)$. In this chart, $\cal{L}$ is given by the graph of a closed $1$-form $\alpha \in \Omega^{1}(L)$. Then $\cal{L}_{t} = Gr(t \alpha)$ defines a family of Lagrangians interpolating between $L$ and $\cal{L}$. Since $T\cal{L}_{z} = TL_{z}$ it follows that $T(\cal{L}_{t})_{z} = TL_{z}$ for all $t$ implying that this family is transverse to $K_{\pm}$ at all times. Combining the two families, we obtain a family of branes interpolating between the holomorphic Lagrangian $\Lambda$ and the brane bisection $\cal{L}$. This family fixes the point $z$, and at all times the tangent space at $z$ is transverse to $K_{\pm}$. Thus we obtain the required interpolation on a (possibly smaller) neighbourhood of $z$.
\end{proof}

Having the interpolating family of branes, we wish to describe it as a Hamiltonian flow. For this, we return to the groupoid chart $t^{-1}(U) \cap s^{-1}(U)$ where we have the following data:  
\begin{enumerate}
\item A holomorphic symplectic groupoid $(\Sigma(U), \Omega_{-})$ integrating $(U, \sigma_{-})$, with underlying imaginary part the smooth real symplectic groupoid $(\Sigma(U), \omega)$;
\item Over a fixed neighbourhood of $z$, $W \subseteq U$, we have a family of Lagrangian bisections of $(\Sigma(U), \omega)$, viewed as sections of the source $s$,
\[
\lambda_{t} : W \to (\Sigma(U), \omega),
\]
such that $\lambda_{0}$ is the identity bisection and $\lambda_{1}$ is the given brane bisection $\cal{L}$ viewed in the groupoid chart. Note that $\lambda_{t}(z) = z$ for all time $t$.
\end{enumerate}
Now let $\psi_{t} := t \circ \lambda_{t}$ denote the resulting family of Poisson diffeomorphisms (for the Poisson structure $Q$), and let $W_{t} = \psi_{t}(W)$. Left multiplication by the bisection defines the following family of symplectomorphisms:
\[
\tau_{t} : t^{-1}(W) \to t^{-1}(W_{t}), \ g \mapsto \lambda_{t}(t(g)) \ast g,
\]
which satisfy $s \circ \tau_{t} = s$ and  $t \circ \tau_{t} = \psi_{t} \circ t$, as well as $\tau_{t} \circ \epsilon = \lambda_{t}$. This family of symplectomorphisms defines a time-dependent vector field $Y_{t} \in \mathcal{X}^{1}(t^{-1}(W_{t}))$ via the equation
\[
Y_{t}(\tau_{t}(g)) = \frac{d}{dt} \tau_{t}(g).
\]
As explained in \cite{xu1997flux} this family of vector fields is Hamiltonian for a $t$-basic closed $1$-form:
\[
\iota_{Y_{t}}\omega = t^{\ast}(\alpha_{t}),
\]
where $\alpha_{t} \in \Omega^{1}(W_{t})$ is a closed time-dependent 1-form. In fact, the restriction of $Y_{t}$ to the bisection gives the following vector field along $\lambda_{t}$:
\[
X_{t}(x) := Y_{t}(\lambda_{t}(x)) = \frac{d}{dt} \lambda_{t}(x).
\]
Since $\lambda_{t}$ is a section of $s$, $X_{t} \in \ker(ds)$ and hence $\omega(X_{t})$ is in the image of $t^{\ast}$, defining the form $\alpha_{t}$. Note that since $t$ is a Poisson map we have 
\[
t_{\ast}(Y_{t}) = Q(\alpha_{t}),
\]
and so $\psi_{t}$ is the flow of this Hamiltonian vector field. The time-dependent form $\alpha_{t} : [0,1] \to \Omega_{cl}^{1}$ is the infinitesimal version of the family of branes $\lambda_{t} : [0,1] \to \mathrm{Bis}^\cal{L}(\Sigma)$. 

Now choose a neighbourhood $W'$ of $z$ such that $W' \subseteq W_{t}$ for all time $t$. Restricting to this neighbourhood, we have $\alpha_{t} \in \Omega^{1,cl}(W')$, and if we assume that $W'$ is contractible then $\alpha_{t}$ are exact. Choose a primitive: let $f_{t} \in C^{\infty}(W')$ be a time-dependent function such that $df_{t} = \alpha_{t}$. We conclude that it is now possible to describe the GK structure purely in terms of this function and the holomorphic Poisson structure $(I_{-}, \sigma_{-})$:

\begin{theorem}  \label{GKlocallyflow}
Let $(Z, \Omega, \cal{L})$ be a Generalized K\"ahler structure of symplectic type, viewed as a holomorphic symplectic Morita equivalence with brane bisection going between holomorphic Poisson structures $(X_{\pm}, \sigma_{\pm})$ with common imaginary part $-\frac{1}{4}Q$. Let $z \in \cal{L}$ be a chosen point on the brane. Then 
\begin{enumerate}
\item It is possible to choose a family of local brane bisections $\cal{L}_{t}$ such that $\cal{L}_{1} = \cal{L}$, $\cal{L}_{0} = \Lambda$, a holomorphic Lagrangian bisection, and such that $z \in \cal{L}_{t}$ for all $t\in[0,1]$. 
\item There is a (locally defined) time-dependent real-valued function $f_{t} \in C^{\infty}(X_{-}, \bb{R})$ such that in a neighbourhood of $z$ the family of brane bisections $\cal{L}_{t}$ is given by $\tilde{\tau}_{t}(\Lambda)$, where $\tilde{\tau}_{t}$ is the flow of the Hamiltonian vector field of $(\phi^{-1} \pi_{+})^{*} f_{t}$ with respect to the symplectic form $\Im (\Omega)$.
\item In a neighbourhood of $\pi_{-}(z)$ the GK structure is given by the data $(I_{+}, I_{-}, Q, F)$, where $I_{-}$ is the given complex structure on $X_{-}$, $I_{+} = \psi_{1}^{\ast}(I_{-})$ and 
\[
F = \int_{0}^{1} \psi_{t}^{\ast}( d_{-}^{c} d f_{t}) dt,
\]
for $\psi_{t}$ the flow of the Hamiltonian vector field $X_{f_{t}} = Q(df_{t})$, and where $d_{-}^{c} = i (\bar{\partial}_{I_{-}} - \partial_{I_{-}})$.
\end{enumerate}
In other words, in a neighbourhood of any point, a GK structure of symplectic type is determined by a holomorphic Poisson structure, together with a time-dependent real-valued function, via the Hamiltonian flow construction of Section~\ref{hamflo}.

\end{theorem}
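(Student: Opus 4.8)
The plan is to assemble the statement from pieces already in place in Sections~\ref{branbi}, \ref{hamflo} and in the discussion immediately preceding the theorem. Part~(1) is nothing but Proposition~\ref{bisectionpath}: once the local holomorphic Lagrangian bisection $\Lambda$ through $z$ has been fixed, that proposition already produces, on a neighbourhood of $z$, a family $\cal{L}_{t}$ of brane bisections with $\cal{L}_{0}=\Lambda$, $\cal{L}_{1}=\cal{L}$ and $z\in\cal{L}_{t}$ for all $t\in[0,1]$ (it was built so as to fix $z$ and keep the tangent space at $z$ transverse to $K_{\pm}$), so nothing further is needed there.

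For part~(2) I would push the family $\cal{L}_{t}$ through the groupoid chart $\Phi$ of~\eqref{grpcht}, turning it into a path of Lagrangian bisections $\lambda_{t}\colon W\to(\Sigma(U),\omega)$, sections of the source $s$, with $\lambda_{0}=\epsilon$ the identity bisection and $\lambda_{1}$ the brane $\cal{L}$ (these are $\omega$-Lagrangian because $\Phi$ is symplectic for the imaginary parts). Left translation by $\lambda_{t}$ defines the symplectomorphisms $\tau_{t}$ and hence a time-dependent vector field $Y_{t}$ with $\tfrac{d}{dt}\tau_{t}=Y_{t}\circ\tau_{t}$, and by Xu's flux homomorphism~\cite{xu1997flux} one has $\iota_{Y_{t}}\omega=t^{*}\alpha_{t}$ for a closed time-dependent $1$-form $\alpha_{t}$. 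Restricting to a fixed contractible neighbourhood $W'$ of $\pi_{-}(z)$ inside all the sets in play, I write $\alpha_{t}=df_{t}$ with $f_{t}\in C^{\infty}(W',\bb{R})$; then $Y_{t}=\omega^{-1}\bigl(d(t^{*}f_{t})\bigr)$ is the $\omega$-Hamiltonian vector field of $t^{*}f_{t}$ and $\tau_{t}$ its flow. Since $\Phi$ is a holomorphic symplectomorphism with $\pi_{+}\circ\Phi=\phi\circ t$, equivalently $t\circ\Phi^{-1}=\phi^{-1}\circ\pi_{+}$, the pushforward $\Phi_{*}Y_{t}$ is the $\Im(\Omega)$-Hamiltonian vector field of $(\phi^{-1}\pi_{+})^{*}f_{t}$, and its flow $\tilde\tau_{t}=\Phi\circ\tau_{t}\circ\Phi^{-1}$ satisfies $\tilde\tau_{t}(\Lambda)=\Phi(\tau_{t}(\epsilon))=\Phi(\lambda_{t})=\cal{L}_{t}$; in particular $\tilde\tau_{1}(\Lambda)=\cal{L}$. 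This is part~(2).

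For part~(3) I would read off the degenerate GK data from $\lambda_{t}$ in the language of Section~\ref{hamflo}. The path $t\mapsto\lambda_{t}$ is a path through the identity in the group $\mathrm{Bis}^\cal{L}(\Sigma(U))$ whose infinitesimal generator at time $t$ is the closed $1$-form $\alpha_{t}=df_{t}$, so under the Lie algebra homomorphism $\Omega^{1,\mathrm{cl}}(M)\to\frak{aut}_\cal{C}(I_{-},\sigma_{-})$, $\alpha\mapsto(Q(\alpha),d_{-}^{c}\alpha)$, the image path in $\BPic(X_{-},\sigma_{-})\cong\mathrm{Aut}_\cal{C}(I_{-},\sigma_{-})$ is generated by the time-dependent infinitesimal Courant symmetry $(Q(df_{t}),d_{-}^{c}df_{t})$. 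Integrating this curve inside the semidirect product $\mathrm{Diff}_{Q}(M)\ltimes\Omega^{2,\mathrm{cl}}(M)$ with product~\eqref{groupproduct}---the time-dependent analogue of the exponential formula behind Propositions~\ref{flow1} and~\ref{flow2}---yields the element $\bigl(\psi_{1},\ \int_{0}^{1}\psi_{t}^{*}(d_{-}^{c}df_{t})\,dt\bigr)$, where $\psi_{t}$ is the flow of the time-dependent vector field $Q(df_{t})$ (note $\psi_{t}=t\circ\lambda_{t}$). Concretely, the same $F$ is obtained by differentiating the pullback: since $d\Omega_{-}=0$ one has $\tfrac{d}{dt}(\lambda_{t}^{*}\Omega_{-})=d\bigl(\lambda_{t}^{*}\iota_{Y_{t}}\Omega_{-}\bigr)$, and expressing $\iota_{Y_{t}}\Omega_{-}$ through the complex structure together with $t^{*}(d_{-}^{c}f_{t})=d^{c}(t^{*}f_{t})$ and $dd^{c}=-d^{c}d$ turns this into $\psi_{t}^{*}(d_{-}^{c}df_{t})$, which integrates from $\epsilon^{*}\Omega_{-}=0$ to $F=\lambda_{1}^{*}\Omega_{-}=\int_{0}^{1}\psi_{t}^{*}(d_{-}^{c}df_{t})\,dt$. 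Feeding this back through the dictionary of Theorem~\ref{main} in the Picard form---$I_{-}$ the given complex structure of $X_{-}$, $Q=-4\Im(\sigma_{-})$, $F=\lambda_{1}^{*}\Omega_{-}$ and $I_{+}=\psi_{1}^{*}(I_{-})$---gives the asserted data $(I_{+},I_{-},Q,F)$, and positive-definiteness of the induced metric is automatic by Theorem~\ref{GK=star} since we started from an honest GK structure. The closing sentence of the theorem just re-reads parts~(2) and~(3).

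The step I expect to require the most care is the time-dependent integration in~(3): checking that $t\mapsto(Q(df_{t}),d_{-}^{c}df_{t})$ integrates to exactly $\bigl(\psi_{1},\int_{0}^{1}\psi_{t}^{*}(d_{-}^{c}df_{t})\,dt\bigr)$ is a variation-of-parameters computation in $\mathrm{Diff}_{Q}(M)\ltimes\Omega^{2,\mathrm{cl}}(M)$, and one has to keep the left/right translation conventions straight so that it specializes correctly to the time-independent exponential of Section~\ref{hamflo}; equivalently, in the direct route one must pin down the convention-dependent sign in $\iota_{Y_{t}}\Omega_{-}$ and use the correct variation formula for pullbacks along a time-dependent family of maps. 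A secondary, purely bookkeeping issue is to arrange that the domains of the $\cal{L}_{t}$, the sets $W_{t}$, and the contractible $W'$ all sit inside one fixed neighbourhood of $z$ on which every step above is valid.
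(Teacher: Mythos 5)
Your proposal is correct and follows essentially the same route as the paper: part (1) is Proposition~\ref{bisectionpath}, part (2) is the groupoid-chart and flux-homomorphism discussion preceding the theorem, and part (3) is exactly the paper's computation $\frac{d}{dt}(\lambda_{t}^{\ast}\Omega_{-})=\lambda_{t}^{\ast}\mathcal{L}_{Y_{t}}\Omega_{-}=\psi_{t}^{\ast}(d_{-}^{c}df_{t})$ followed by integration from $\epsilon^{\ast}\Omega_{-}=0$. The alternative semidirect-product integration you sketch is not needed, since the direct pullback differentiation you also give is precisely what the paper does.
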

%
\begin{proof}
It remains only to prove the formula for $F = \lambda_{1}^{\ast} \Omega_{-}$. Differentiating the pullback $\lambda_{t}^{\ast} \Omega_{-}$ gives
\[
\frac{d}{dt}(\lambda_{t}^{\ast} \Omega_{-}) = \lambda_{t}^{\ast} \cal{L}_{Y_{t}}\Omega_{-} = \lambda_{t}^{\ast} d I^{\ast} \omega(Y_{t}) = \lambda_{t}^{\ast} d I^{\ast} t^{\ast} \alpha_{t} = \psi_{t}^{\ast} d I_{-}^{\ast} \alpha_{t} = \psi_{t}^{\ast} d_{-}^{c} \alpha_{t} = \psi_{t}^{\ast} (d_{-}^{c} d f_{t}).
\]
Therefore $\lambda_{1}^{\ast} \Omega_{-} = \int_{0}^{1} \frac{d}{dt}(\lambda_{t}^{\ast} \Omega_{-}) dt = \int_{0}^{1} \psi_{t}^{\ast}( d_{-}^{c} d f_{t}) dt$.
\end{proof}

\begin{example}
Example \ref{Kahlerex} of a K\"ahler structure showed how the generalized K\"ahler potential of Section~\ref{secpot} generalizes the usual K\"ahler potential. However, another look at this example shows that what we were doing is actually an instance of what is described in the present section. Namely, given a holomorphic Lagrangian $L$, viewed as a section $\lambda$, we used the groupoid structure to induce a local isomorphism of Morita equivalences:
\[
(T^{\ast}X, \Omega_0) \to (Z, \Omega), \qquad \alpha_{x} \mapsto \alpha_{x} + \lambda(x),
\]
so that our Darboux chart in this case is actually a groupoid chart in the sense of~\eqref{grpcht}. Now we view the brane bisection $\cal{L}$ in this chart as the graph of a $(1,0)$-form $\eta = I^{\ast} \alpha + i \alpha$, where $\alpha$ is a closed $1$-form. For our interpolating family of branes, we may choose $\cal{L}_{t} = Gr(t \eta)$. Then the induced family of closed $1$-forms on $X$ is given by the time-independent form $\alpha$. As in example \ref{Kahlerex}, we choose a real-valued function $K$ on $X$ such that $\alpha = - \frac{1}{2} dK$. Then noting that the Hamiltonian vector field $X_{K} = 0$, and appealing to theorem \ref{GKlocallyflow}, we see that the K\"ahler form is given by 
\[
\omega = -\frac{1}{2} d^{c} d K = i \partial \bar{\partial} K.
\]
Therefore the construction of Theorem \ref{GKlocallyflow} provides an alternate generalization of the K\"ahler potential which is more closely adapted to the underlying groupoid structure.
\end{example}

\bibliographystyle{hyperamsplain} \bibliography{references}

\end{document}